\theoremstyle{theorem}
\newtheorem{theorem}{Theorem}[section]
\theoremstyle{definition}
\newtheorem{lemma}[theorem]{Lemma}
\newtheorem{corollary}[theorem]{Corollary}
\newtheorem{proposition}[theorem]{Proposition}
\newtheorem{remark}[theorem]{Remark}
\newtheorem{example}[theorem]{Example}
\newtheorem{definition}[theorem]{Definition}
\newcommand{\N}{\mathbb{N}}
\newcommand{\R}{\mathbb{R}}
\newcommand{\B}{\mathbb{B}}
\newcommand{\Rinf}{\mathbb{R}\cup \{+\infty \}}
\newcommand{\I}{\mathcal{I}}
\newcommand{\NN}{\mathcal{N}}
\newcommand{\X}{X}
\newcommand{\eps}{\varepsilon}
\newcommand{\Id}{\mathrm{Id}}
\newcommand{\argmin}{\mbox{\rm argmin\,}}
\newcommand{\bd}{\mbox{\rm bd\,}}
\newcommand{\dom}{\mathop\mathrm{\rm dom}}
\newcommand{\co}{\mbox{\rm co}}
\newcommand{\dist}{\mbox{\rm dist\,}}
\newcommand{\inte}{\mbox{\rm int\,}}
\newcommand{\qri}{\mbox{\rm qri\,}}
\newcommand{\ri}{\mbox{\rm ri\,}}
\newcommand{\rint}{\mbox{\rm ri\,}}
\DeclareMathOperator{\sri}{sri}
\newcommand{\prox}{\mbox{\rm prox}}
\newcommand{\proj}{\mbox{\rm proj}}
\newcommand{\sgn}{\mbox{\rm sgn}}
\newcommand{\soft}{\mbox{\rm soft}}
\newcommand{\supp}{\mbox{\rm supp}}
\newcommand{\esupp}{\mbox{\rm esupp}}
\DeclareFontFamily{U}{matha}{\hyphenchar\font45}
\DeclareFontShape{U}{matha}{m}{n}{<5> <6> <7> <8> <9> <10> gen * matha <10.95> matha10 <12> <14.4> <17.28> <20.74> <24.88> matha12}{}
\DeclareSymbolFont{matha}{U}{matha}{m}{n}
\DeclareMathSymbol{\leq}         {3}{matha}{"A4}
\DeclareMathSymbol{\geq}         {3}{matha}{"A5}
\newcommand{\kin}{{k\in\N}}
\newcommand{\kinn}{{k\in\mathcal{N}}}
\newcommand{\nin}{{n\in\N}}
\title{
Thresholding gradient methods in  Hilbert spaces: \\ support identification and linear convergence
}
\author{Guillaume Garrigos}
\address{
\hspace*{-0.5cm} G. Garrigos ~\ \Letter 
\newline \hspace*{0.35cm}
\textup{CNRS, \'Ecole Normale Sup\'erieure (DMA), 
 75005 Paris, France}
\newline \hspace*{0.35cm}
\textup{\nolinkurl{guillaume.garrigos@ens.fr}}
}
\author{Lorenzo Rosasco}
\address{
\hspace*{-0.5cm} L. Rosasco
\newline \hspace*{0.35cm}
\textup{LCSL, Istituto Italiano di Tecnologia and Massachusetts Institute of Technology, Cambridge, MA 02139, USA}
\newline \hspace*{0.35cm}
\textup{Universit\`a degli Studi di Genova (DIBRIS), 16146 Genova, Italy}
\newline \hspace*{0.35cm}
\textup{\nolinkurl{lrosasco@mit.edu}}
}
\author{Silvia Villa}
\address{
\hspace*{-0.5cm} S. Villa
\newline \hspace*{0.35cm}
\textup{Politecnico di Milano (Dipartimento di Matematica), 20133 Milano, Italy}
\newline \hspace*{0.35cm}
\textup{\nolinkurl{silvia.villa@polimi.it}}
}
\begin{document}

\begin{abstract}
We study $\ell^1$ regularized least squares optimization problem in a separable Hilbert space. We show that the iterative soft-thresholding algorithm (ISTA) converges linearly, 
without making any assumption on the linear operator into play or on the problem.
The result is obtained combining two key concepts:  the notion of {\it extended support}, a finite set containing the support, and the notion of  {\it conditioning over finite dimensional sets}.
We prove that ISTA identifies the solution extended support after a finite number of  iterations, and we derive linear convergence from the conditioning property, 
which is always satisfied for $\ell^1$ regularized least squares problems. 
Our analysis extends to the the entire class of thresholding gradient algorithms, for which we provide a conceptually new proof of strong convergence, as well as  convergence rates.

\bigskip

\noindent \textsc{Keywords.} Forward-Backward method, support identification,  conditioning, convergence rates.

\medskip

\noindent \textsc{MSC.}  49K40, 49M29, 65J10, 65J15, 65J20, 65J22, 65K15, 90C25, 90C46.
%
%
%
%
\end{abstract}

\maketitle

\section{Introduction}

Recent works show that, for many problems of interest,   favorable geometry  can greatly improve theoretical results with respect to more general, worst-case perspective \cite{AttBol09,DruLew16,BlaBol16,GarRosVil17}. 
 In this paper,  we follow this perspective to analyze  the convergence properties of threshold gradient methods in separable Hilbert spaces.  
Our starting point is the now classic iterative soft thresholding algorithm (ISTA) to solve the problem
\begin{equation}\label{E:LASSO}
f(x) =  \Vert x \Vert_1 + \frac{1}{2}\Vert Ax-y \Vert^2,
\end{equation}
defined by an operator $A$  on $\ell^2(\N)$ and  where $\Vert \cdot \Vert_1$ is  the $\ell^1$ norm.

From the seminal work  \cite{DauDefDem04}, it is known that ISTA converges strongly in $\ell^2(\N)$. 
This result is generalized  in \cite{ComPes07} to a wider  class of algorithms, the so-called thresholding gradient methods, {noting that these  are special instances of the Forward-Backward algorithm, where the proximal step reduces to a thresholding step onto an orthonormal basis} (Section \ref{S:threshold gradient method}).
Typically,  strong convergence in  Hilbert spaces is the consequence of a particular structure of the considered  problem.  Classic examples being even functions, functions for which the set of minimizers has a nonempty interior, or strongly convex functions \cite{PeySor10}. Further examples are  uniformly convex functions, or functions presenting a favorable geometry around their minimizers, like conditioned functions or Lojasiewicz functions, see e.g. \cite{BauComV2,GarRosVil17}.  Whether the properties of   ISTA,  and more generally threshold gradient methods,  can be explained from this perspective is not apparent from the analysis in 
 \cite{DauDefDem04,ComPes07}. 
 
 Our first contribution is revisiting these results providing 
such an explanation: for these algorithms, the whole sequence of iterates is fully contained in a specific finite-dimensional subspace, ensuring automatically strong convergence. The key argument in our analysis is that  after a finite number of iterations, the iterates identify the so called \textit{extended support} of their limit.
{This set coincides with  the active constraints at the solution of the dual problem, and reduces to the support, if some qualification condition is satisfied.}\\
Going further, we  tackle the question of convergence rates, providing a unifying treatment of finite and infinite dimensional settings. 
In finite dimensions, it is clear that if $A$ is injective, then $f$ becomes a strongly convex function, which guarantees a linear convergence rate. In \cite{HalYinZha07}, it is shown, still in a finite dimensional setting, that the linear rates hold  just assuming $A$ to be injective on the extended support of the problem.
This result is generalized in \cite{BreLor08b} to a Hilbert space setting,  assuming  $A$ to be injective on any subspace of finite support. 
Linear convergence is also obtained by assuming the limit solution to satisfy some nondegeneracy condition \cite{BreLor08b,LiaFadPey14}.
In fact, it was shown recently in \cite{BolNguPeySut16} that, in finite dimension, no assumption at all is needed to guarantee linear rates. Using  a key result in \cite{Li13},   the function $f$ was shown to be  $2$-conditioned on its sublevel sets,  and $2$-conditioning is  sufficient  for  linear rates  \cite{AttBolSva13}. 
Our  identification result, mentioned above, allows to easily bridge the gap between the finite and infinite dimensional settings. Indeed, we show  that in any separable Hilbert space, linear rates of convergence always hold for the soft-thresholding gradient algorithm under no further assumptions. Once again, the key argument to obtain  linear rates is the fact that the iterates generated by the algorithm identify, in finite time, a set on which we know the function to have a favorable geometry.

The paper is organized as follows.
In Section \ref{S:threshold gradient method} we describe  our setting and introduce the thresholding gradient method.
We introduce the notion of extended support in Section \ref{S:extended support}, in which we show that the thresholding gradient algorithm identifies this extended support after a finite number of iterations (Theorem \ref{T:finite identification of the extended support}).
In Section \ref{S:rates} we present some consequences of this result on the convergence of the algorithm.
We first derive in Section \ref{SS:general case} the strong convergence of the iterates, together with a general framework to guarantee rates.
We then specify our analysis to the function \eqref{E:LASSO} in Section \ref{SS:particular case}, and show the linear convergence of ISTA (Theorem \ref{T:CV linear for ISTA}).
We also consider in Section \ref{SS:L1Lp regularized LS} an elastic-net modification of \eqref{E:LASSO}, by adding an $\ell^p$ regularization term, and provide rates as well, depending on the value of $p \in ]1,+\infty[$.

\medskip

\section{Thresholding gradient methods}\label{S:threshold gradient method}

\paragraph{\textbf{Notation}}
We introduce some notation we will use throughout this paper.
$\mathcal{N}$ is a subset of $\mathbb{N}$. Throughout the paper, $X$ is a separable Hilbert space  endowed with the scalar product $\langle \cdot, \cdot \rangle$,
and $(e_k)_{k\in\mathcal{N}}$ is an orthonormal basis of $X$. 
Given $x \in X$, we set $x_k=\langle x,e_k \rangle$. The support of $x$ is $\supp(x)=\{\kinn \ | \ x_k \neq 0 \}$. Analogously, given  
$C\subset X$, $C_k=\{\langle x,e_k\rangle\,:\, x\in C\}$.
Given  $J \subset \N$,  the subspace supported by $J$ is denoted by $X_J=\{ x \in X \ | \ \supp(x) \subset J\}$ and  the subset 
of finitely supported vectors $c_{00}=\{x\in X\,:\, \supp(x) \text{ is finite }\}$. 
Given a collection of intervals $\{I_k\}_{k\in\mathcal{N}}$ of the real line, with a slight abuse of notation, we define, for every $k\in\mathcal{N}$,
\[
\mathbb{B}_{\infty,\mathcal{I}}=\bigoplus_{\kinn} I_k=\{x\in X\,:\, x=\sum_{k\in\mathcal{N}} t_k e_k, \text{ with } t_k\in I_k \text{ for every $k\in\mathcal{N}$} \}.
\]
Note that $\bigoplus_{k\in\mathcal{N}} {I}_k$ is a subspace of $X$. Therefore, the components of each element of $\bigoplus_{k\in\mathcal{N}} {I}_k$ must be  square summable.
The closed  ball of center $x\in X$ and radius $\delta\in\left]0,+\infty\right[$  is denoted by $\mathbb{B}_X(x,\delta)$.
Let $C\subset X$ be a closed convex set. Its indicator and support functions are denoted $\delta_C$ and $\sigma_C$, respectively, and the projection
onto $C$ is $\proj_C$.
Moreover, $\inte C$, $\bd C$, $\ri C$, and $\qri C$ will denote respectively the interior, the boundary, the relative interior, and the quasi relative interior of $C$ \cite[Section 6.2]{BauComV2}.
The set  of proper convex lower semi-continuous functions from $X$ to $\Rinf$ is denoted by $\Gamma_0(X)$.
Let $f\in\Gamma_0(X)$ and let $r\in\left]0,+\infty\right[$.  The sublevel set of $f$ is $S_f(r)=\{x \in X \ | \ f(x) - \inf f < r \}$.
The proximity operator of $f$ is defined as
\begin{equation*}
(\forall \lambda\in\left]0,+\infty\right[)\qquad \prox_{\lambda f}(x) = \argmin \{ y \in X \ | \ f(y) + \frac{1}{2 \lambda} \Vert y - x \Vert^2 \}.
\end{equation*}
Let $I\subset \mathbb{R}$ be a closed interval. Then, $\prox_{\sigma_{I}}=\soft_{I}$, where
	\begin{equation*}
(\forall t \in \R) \quad \soft_{I} (t) =
\begin{cases}
t - \inf I & \text{ if } t < \inf I \\
0 & \text{ if } t \in I \\
t - \sup I & \text{ if } t > \sup I,
\end{cases}
\end{equation*}
is the soft-thresholder corresponding to $I$.

\medskip

\paragraph{\bf Problem and main hypotheses}
We consider the general optimization problem
\begin{equation}
\label{E:Probleme Primal P}
\min\limits_{x \in X} \ f(x), \qquad f= g + h, \tag{$\text{\rm P}$}
\end{equation}
where typically $h$ will play the role of a smooth data fidelity term, and $g$ will be a nonsmooth sparsity promoting regularizer.
More precisely, we will make the following assumption:
\begin{equation}
\label{H:main structural assumption f=g+h}
\tag{$\text{\rm H}$}
\begin{cases}
\text{$h \in \Gamma_0(X)$ is bounded from below, }\\
\text{$h$ is differentiable, and $\nabla h$ is $L$-Lipschitz continuous on $X$, $L\in\left]0,+\infty\right[$},\\
\text{$g=\sum\limits_{\kinn}^{} g_k(\langle \cdot , e_k \rangle)$, with $g_k=\psi_k + \sigma_{I_k}$, where:} \\
\quad \bullet \, \text{for all $\kinn$, $I_k$ is a proper closed interval of $\R$, and $\I=\{I_k\}_\kinn$,} \\
\quad \bullet \,  \text{for all $\kinn$,}\, (\exists \omega > 0) \quad\text{ $[-\omega,\omega] \subset  I_k$,}\\
\quad \bullet \, \text{for all $\kinn$, $\psi_k \in \Gamma_0(\R)$ is differentiable at $0$ with $\psi_k(0)=0$ \text{ and } $\psi_k'(0)=0$.} 
\end{cases}
\end{equation}

\noindent As stated in the above assumption, in this paper we focus on  a specific class of functions $g$. They are given by 
the sum of a weighted $\ell^1$ norm and a positive smooth function minimized at the origin, namely:
$$\Vert \cdot \Vert_{1,\I} = \sum_{\kinn}^{} \sigma_{I_k},\quad\quad \psi = \sum_{\kinn}^{} \psi_k.$$ 
In \cite{ComPes07} the following characterization has been proved: the proximity operators of such functions $g$ 
are the monotone operators $T\colon X\to X$ such that for all $x\in X$, $T(x)=\left( T_k(x_k)\right)_\kinn$, for some $T_k\colon\mathbb{R}\to \mathbb{R}$ which satisfies
\[
(\forall \kinn)\quad T_k(x_k)=0 \iff x_k\in I_k.
\]
A few examples of such, so called,  \textit{thresholding operators} are shown in Figure \ref{F:prox thresholders}, and a more in-depth analysis can be found in \cite{ComPes07}.

\begin{center}
\begin{figure}[h]
\begin{minipage}{0.49\linewidth}
\begin{center}
\includegraphics[width=0.8\linewidth]{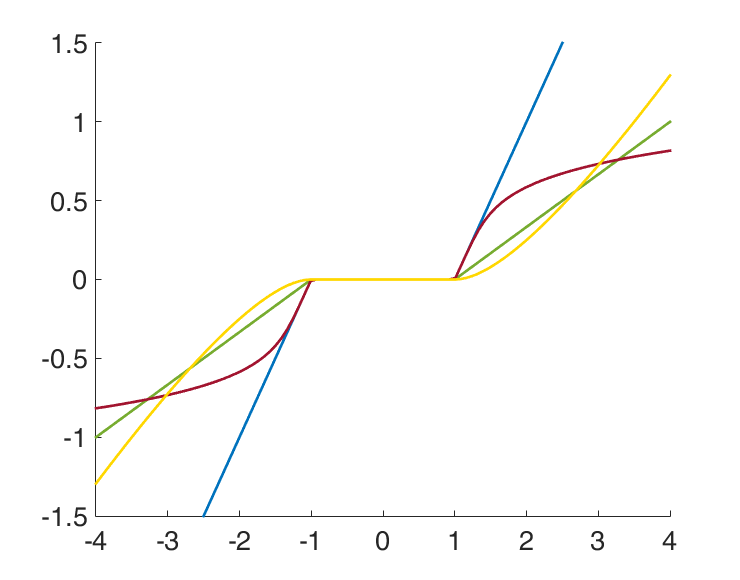}
\end{center}
\end{minipage}
\begin{minipage}{0.49\linewidth}
\begin{center}
\includegraphics[width=0.8\linewidth]{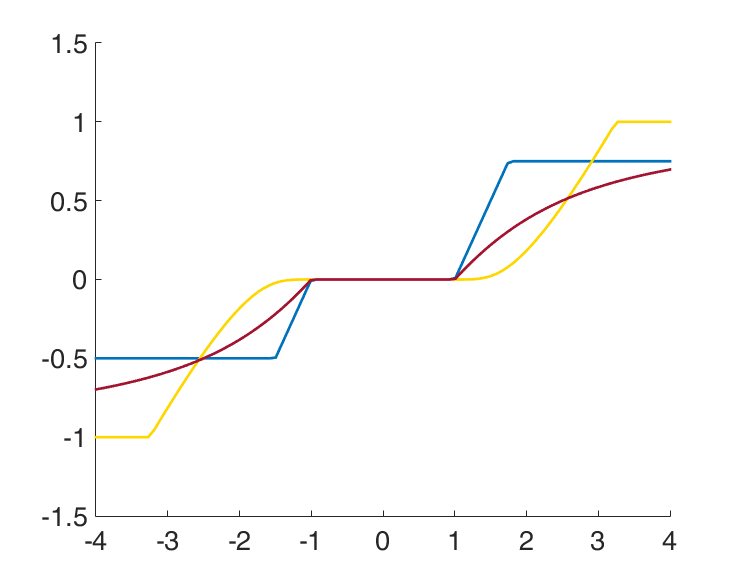}
\end{center}
\end{minipage}
\caption{\small Some examples of thresholding proximal operators in $\R$. 
On the left: $\vert \cdot \vert = \sigma_{[-1,1]}$ (blue), $\vert \cdot \vert + \vert \cdot \vert^{1.5}$ (yellow), $\vert\cdot \vert + \vert \cdot \vert^{2}$ (green), $\vert\cdot \vert + \vert \cdot \vert^{6}$ (red). 
On the right: $\vert \cdot \vert  + \delta_{[-0.5,0.75]}$ (blue), $ \vert \cdot \vert + \vert \cdot \vert^{1.5} + \delta_{[-1,1]}$ (yellow), and $\vert \cdot \vert - \ln(1- \vert \cdot \vert) + \delta_{]-1,1[}$ (red).
{ Observe that here the range of $\prox_g$ is equal to  the domain of $\partial \psi$.} }
\label{F:prox thresholders}
\end{figure}
\end{center}

A well-known approach to approximate solutions of \eqref{E:Probleme Primal P} is  the Forward-Backward algorithm  \cite{BauComV2}
\begin{equation}
\label{D:Forward backward}
x^0 \in X, \quad \lambda \in ]0, 2 L^{-1}[, \quad x^{n+1} = \prox_{\lambda g } (x^n - \lambda \nabla h(x^n)). \tag{FB}
\end{equation}
In our setting,  \eqref{D:Forward backward} is well-defined and specializes to a  \textit{thresholding gradient method}.
The Proposition below gathers some basic properties of $g$ and $f$ following from  assumption \eqref{H:main structural assumption f=g+h}.
\begin{proposition} The following hold. 
\label{P:simple facts about the problem}~~
\begin{enumerate}[(i)]
	\item\label{P:simple facts about the problem:L1 is support of infty ball} $\Vert \cdot \Vert_{1,\I}$ is the support function of $\B_{\infty,\I}=\bigoplus_{k\in\mathcal{N}} {I}_k$,
	\item\label{P:simple facts about the problem:domain is finite support seq} $\dom \partial \Vert \cdot \Vert_{1,\I}=c_{00}$,
	\item\label{P:simple facts about the problem:g is coercive} $g \in \Gamma_0(X)$ and it is coercive,
	\item\label{P:simple facts about the problem:solutions exist} $f$ is bounded from below and $\argmin f$ is nonempty,
	\item\label{P:simple facts about the problem:dual unique} the dual problem
\begin{equation}\label{E:Probleme Dual D}
\min\limits_{u \in X} \ g^*(u) + h^*(-u), \tag{$\text{\rm D}$}
\end{equation}
admits a unique solution $\bar u \in X$, and for all $\bar x \in \argmin f$, $\bar u = -\nabla h(\bar x)$.
	\item\label{P:simple facts about the problem:prox computation} for all $x \in X$ and all $\lambda > 0$, the proximal operator of $g$ can be expressed as
	\begin{equation*}
	\prox_{\lambda g } (x) = \sum_{k\in \mathcal{N}}  \prox_{\lambda \psi_k}\left(\soft_{\lambda I_k} (x_k)\right)  e_k .
	\end{equation*}
\end{enumerate}
\end{proposition}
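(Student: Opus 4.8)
The plan is to prove the six items essentially independently, exploiting throughout the \emph{separability} of $g$ across the orthonormal basis and the inclusion $[-\omega,\omega]\subset I_k$; I would carry them out in the stated order, since (iii)--(iv) build on (i) and (v)--(vi) use the earlier facts. For (i), I would write $\sigma_{\B_{\infty,\I}}(x)=\sup\{\langle u,x\rangle : u_k\in I_k,\ \sum_k u_k^2<\infty\}$, where the bound $\leq\sum_k\sigma_{I_k}(x_k)$ is immediate termwise. The reverse inequality is the first delicate point in infinite dimension: the coordinatewise maximizers of $s\mapsto s\,x_k$ over $I_k$ need not be square-summable when the $I_k$ are unbounded, so instead of maximizing all coordinates at once I would approximate by \emph{finitely supported} competitors $u^{(N)}$, taken equal to an exact maximizer in $I_k$ (which exists whenever $\sigma_{I_k}(x_k)<\infty$) for $k\leq N$ and to $0\in I_k$ otherwise. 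Since $\sigma_{I_k}(x_k)\geq 0$, the partial sums $\langle u^{(N)},x\rangle=\sum_{k\leq N}\sigma_{I_k}(x_k)$ increase to $\sum_k\sigma_{I_k}(x_k)$, covering both the finite and the $+\infty$ cases.

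For (ii), separability gives $u\in\partial\Vert\cdot\Vert_{1,\I}(x)$ iff $u_k\in\partial\sigma_{I_k}(x_k)$ for every $k$, where $\partial\sigma_{I_k}(t)$ equals $\{\sup I_k\}$ for $t>0$, $\{\inf I_k\}$ for $t<0$, and $I_k$ for $t=0$. Because $[-\omega,\omega]\subset I_k$, any nonzero $x_k$ forces $|u_k|\geq\omega$, so square-summability of $u$ forbids infinitely many nonzero coordinates, giving $\dom\partial\subset c_{00}$; conversely, for $x\in c_{00}$ the finitely supported selection ($0$ on the zero coordinates) is a genuine subgradient. For (iii), $g=\sup_N\sum_{k\leq N}g_k(\langle\cdot,e_k\rangle)$ is an increasing supremum (each $g_k\geq 0$, since $\psi_k$ has global minimum $0$ at $0$ and $\sigma_{I_k}\geq 0$) of convex lsc functions, hence lies in $\Gamma_0(X)$; coercivity follows from $\sigma_{I_k}(t)\geq\omega|t|$, which yields $g\geq\omega\Vert\cdot\Vert_1\geq\omega\Vert\cdot\Vert$. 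Then (iv) is immediate: $f=g+h\geq\inf h>-\infty$ and $f$ is coercive, proper, lsc and convex, so a minimizer exists by weak lower semicontinuity on the bounded (hence weakly compact) sublevel sets.

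For (v), I would apply Fenchel--Rockafellar duality to $\min_x g(x)+h(x)$; since $h$ is finite and continuous on $X$, the qualification holds, strong duality is valid, and the optimality conditions read $\bar u\in\partial g(\bar x)$ and $-\bar u=\nabla h(\bar x)$. Hence, starting from any $\bar x\in\argmin f$, the point $\bar u=-\nabla h(\bar x)$ solves \eqref{E:Probleme Dual D}. Uniqueness comes from strong convexity of the dual objective: as $\nabla h$ is $L$-Lipschitz, $h^*$ is $L^{-1}$-strongly convex, hence so is $u\mapsto g^*(u)+h^*(-u)$, which can therefore have at most one minimizer; in particular $-\nabla h(\bar x)$ is forced to be the same for every primal solution.

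Finally, for (vi), separability gives $\prox_{\lambda g}(x)_k=\prox_{\lambda g_k}(x_k)$, and from $\lambda\sigma_{I_k}=\sigma_{\lambda I_k}$ one gets $\prox_{\lambda\sigma_{I_k}}=\soft_{\lambda I_k}$, so it remains to prove the one-dimensional identity $\prox_{\lambda g_k}(t)=\prox_{\lambda\psi_k}(\soft_{\lambda I_k}(t))$. Setting $q=\soft_{\lambda I_k}(t)$ and $p=\prox_{\lambda\psi_k}(q)$, I would verify the optimality condition $t-p\in\lambda\partial g_k(p)$ directly, using the always-valid inclusion $\partial\psi_k(p)+\partial\sigma_{I_k}(p)\subset\partial g_k(p)$ (no qualification needed). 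Combining $t-q\in\lambda\partial\sigma_{I_k}(q)$ with $q-p\in\lambda\partial\psi_k(p)$ yields $t-p\in\lambda\partial\sigma_{I_k}(q)+\lambda\partial\psi_k(p)$, so the crux is to replace $\partial\sigma_{I_k}(q)$ by $\partial\sigma_{I_k}(p)$. This is exactly where $\psi_k'(0)=0$ is used: it forces $\prox_{\lambda\psi_k}(0)=0$ and, by monotonicity of the proximal map, $\sgn(p)=\sgn(q)$ with $p=0\iff q=0$; thus $q$ and $p$ lie on the same side of the origin and $\partial\sigma_{I_k}(q)=\partial\sigma_{I_k}(p)$ (the case $q=0$ being handled by $\partial\sigma_{I_k}(0)=I_k$). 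I expect this sign-alignment step, together with the approximation argument in (i), to be the only genuinely delicate points; everything else is bookkeeping once separability is in place.
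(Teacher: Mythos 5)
Your proof is correct, and its mathematical skeleton is the one the paper uses: coordinatewise (separable) reduction for (i), (ii) and (vi), the uniform inclusion $[-\omega,\omega]\subset I_k$ to force finite supports and coercivity, strong convexity of $h^*$ for (v), and the composition formula for the prox. The difference is one of packaging: the paper's own proof is essentially a list of citations --- items (i)--(iii) are delegated to the annex propositions on countable sums of (support) functions, item (v) to Bauschke--Combettes, and item (vi) to Combettes--Pesquet's one-dimensional result --- whereas you inline self-contained arguments. Concretely, in (i) you compute the support function of $\B_{\infty,\I}$ directly via finitely supported competitors, while the annex computes the conjugate, showing $\Vert\cdot\Vert_{1,\I}^{\,*}=\delta_{\B_{\infty,\I}}$ by the same truncation trick plus (implicitly) biconjugation, so your direction is slightly more economical; in (iii) your pointwise bound $\sigma_{I_k}\geq\omega\,\vert\cdot\vert$, hence $g\geq\omega\Vert\cdot\Vert$, replaces the paper's conjugacy criterion for coercivity; in (v) you obtain existence of the dual solution constructively (the point $\bar u=-\nabla h(\bar x)$ together with the Fenchel equalities certifies zero duality gap), rather than quoting an existence theorem, with uniqueness from $L^{-1}$-strong convexity of $h^*$ exactly as in the paper; and in (vi) your sign-alignment argument, resting on $\prox_{\lambda\psi_k}(0)=0$, monotonicity of scalar proximal maps, and the equivalence $p=0\iff q=0$ forced by $\psi_k'(0)=0$, is precisely the content of the cited Combettes--Pesquet proposition and of the paper's annex lemma on injectivity of the prox at a smooth minimizer. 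Two loose ends, both minor: in (i), if a single term $\sigma_{I_{k_0}}(x_{k_0})=+\infty$, the exact maximizer you invoke does not exist, and you must instead let that one coordinate run off to infinity inside $I_{k_0}$ (your increasing-partial-sums remark only covers the case where every term is finite and the series diverges); and in (ii) the formula $\partial\sigma_{I_k}(t)=\{\sup I_k\}$ for $t>0$ tacitly requires $\sup I_k<+\infty$ --- but this tacit boundedness assumption is equally present in the paper's annex, so it is a shared imprecision rather than a defect of your argument.
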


\begin{proof} 
\ref{P:simple facts about the problem:L1 is support of infty ball}: see Proposition \ref{P:countable sum of functions}\ref{P:countable sum of support functions:coercivity and support box}.  

\ref{P:simple facts about the problem:domain is finite support seq}: see  Proposition \ref{P:countable sum of functions}\ref{P:countable sum of support functions:dom ssdif}.

\ref{P:simple facts about the problem:g is coercive}: see Proposition \ref{P:countable sum of functions}\ref{P:countable sum of support functions:coercivity and support box}.  

\ref{P:simple facts about the problem:solutions exist}: it is a  consequence of the coercivity of $g$ and the fact that both $h$ and $g$ are bounded from below.

\ref{P:simple facts about the problem:dual unique}: the smoothness of $h$ implies the strong convexity of $h^*$, and the existence and uniqueness of $\bar u$, see  \cite[Theorems 15.13 and 18.15]{BauComV2}.
The equality $\bar u = - \nabla h(\bar x)$ follows from \cite[Proposition 26.1(iv)(b)]{BauComV2}.

\ref{P:simple facts about the problem:prox computation}: it follows from \ref{P:countable sum of functions}\ref{P:countable sum of functions:prox} together with \cite[Proposition 3.6]{ComPes07}.
\end{proof}

\section{Extended support and finite identification}\label{S:extended support}

\subsection{Definition and basic properties}

We introduce the notion of extended support of a vector and prove some
basic properties of the support of solutions of problem \eqref{E:Probleme Primal P}.

\begin{definition}\label{D:support and extended support}
Let $x \in X$. The \textit{extended support} of $x$ is
\begin{equation*}
\esupp(x)= \supp(x) \cup \{ \kinn \ | \   -\nabla h(x)_k \in \bd I_k \}.
\end{equation*}
\end{definition}

It is worth noting that the notion of extended support depends on the problem \eqref{E:Probleme Primal P}, since its definition involves $h$ (see Remark \ref{R:extended support and active constraints} for more details).
It appears without a name in \cite{HalYinZha07}, and also in \cite{DegPeyFadJac16,Dos11,DuvPey17} for regularized least squares problems. 
Below we gather some results about the support and the extended support.

\begin{proposition}\label{P:extended support is finite}
Let $x \in \dom \partial f$, then $\supp(x)$ and $\esupp(x)$ are finite.
\end{proposition}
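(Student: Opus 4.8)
The plan is to reduce everything to the coordinatewise structure of $\partial g$ and then exploit the fact that every element of $X$ has square-summable coordinates, so that any fixed positive threshold is exceeded only finitely often. First I would use that $h$ is differentiable on all of $X$, so the sum rule yields $\partial f = \nabla h + \partial g$ and hence $\dom \partial f = \dom \partial g$. Fix $x \in \dom \partial g$ and pick $u \in \partial g(x)$; since $\partial g(x) \subseteq X$, the sequence $(u_k)_{k\in\mathcal N}$ is square-summable. Because $g$ is the separable sum $\sum_k g_k(\langle \cdot, e_k\rangle)$ with $g_k = \psi_k + \sigma_{I_k}$, and since $\psi_k$ is finite and differentiable near $0$ while $\sigma_{I_k}$ is finite-valued, the subdifferential splits coordinatewise: $u_k \in \partial g_k(x_k) = \partial \psi_k(x_k) + \partial\sigma_{I_k}(x_k)$ for every $k$.

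For the finiteness of $\supp(x)$, I would show that $|u_k| \ge \omega$ whenever $x_k \neq 0$. Write $u_k = p_k + s_k$ with $p_k \in \partial \psi_k(x_k)$ and $s_k \in \partial \sigma_{I_k}(x_k)$. Since $\psi_k \geq 0 = \psi_k(0)$ (convexity together with $\psi_k'(0)=0$), the point $0$ minimizes $\psi_k$, so monotonicity of the subdifferential forces $p_k$ to have the sign of $x_k$, i.e. $p_k \geq 0$ if $x_k > 0$ and $p_k \leq 0$ if $x_k < 0$. On the other hand $\partial\sigma_{I_k}(x_k)$ is the face of $I_k$ exposed by $x_k$, equal to the right endpoint of $I_k$ when $x_k > 0$ and to the left endpoint when $x_k < 0$ (both finite, since otherwise $g_k(x_k)=+\infty$); as $[-\omega,\omega]\subset I_k$, in both cases $|s_k| \geq \omega$ and $s_k$ carries the sign of $x_k$. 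Hence $p_k$ and $s_k$ do not cancel and $|u_k| = |p_k| + |s_k| \geq \omega$. Square-summability of $u$ then permits only finitely many such indices, so $\supp(x)$ is finite. (Alternatively, once the coordinatewise splitting is in place, one may invoke Proposition \ref{P:simple facts about the problem}, which gives $\dom \partial \Vert \cdot \Vert_{1,\I} = c_{00}$.)

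For the extended part, I would argue directly on $\nabla h(x)$. If $-\nabla h(x)_k \in \bd I_k$, then $-\nabla h(x)_k$ equals a finite endpoint of $I_k$, and since $[-\omega,\omega] \subset I_k$ every such endpoint lies outside $\left]-\omega,\omega\right[$; thus $|\nabla h(x)_k| \geq \omega$. Because $\nabla h(x) \in X$, its coordinates are square-summable, so again only finitely many indices satisfy this condition. Therefore $\{\, k \mid -\nabla h(x)_k \in \bd I_k \,\}$ is finite, and $\esupp(x)$, being the union of this set with the finite set $\supp(x)$, is finite.

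The only genuinely delicate point is the coordinatewise decomposition $u_k \in \partial\psi_k(x_k) + \partial\sigma_{I_k}(x_k)$ in the infinite-dimensional separable sum: one must ensure both that the separable subdifferential calculus applies (so that membership in $\partial g$ is equivalent to coordinatewise membership of an $X$-valued selection) and that the one-dimensional sum rule holds at each coordinate, which is guaranteed by $0 \in \inte \dom \psi_k$. Everything after that is the elementary observation that a square-summable sequence can exceed the fixed level $\omega$ only finitely often.
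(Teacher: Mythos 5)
Your proof is correct, and its first half takes a genuinely different route from the paper's. The paper converts $u \in \partial g(x)$ into the fixed-point relation $x = \prox_g(x+u)$ and exploits the explicit factorization $\prox_{g}(y) = \sum_k \prox_{\psi_k}(\soft_{I_k}(y_k))e_k$ (Proposition \ref{P:simple facts about the problem}\ref{P:simple facts about the problem:prox computation}) together with Lemma \ref{L:prox is injective at smooth minimizer} to conclude that $\vert x_k + u_k \vert \geq \omega$ for every $k \in \supp(x)$, so that $\vert \supp(x)\vert \leq \omega^{-2}\Vert x + u\Vert^2$. You instead stay at the level of subdifferential calculus: the coordinatewise splitting $\partial g(x) = \bigoplus_k \partial g_k(x_k)$ (Proposition \ref{P:countable sum of functions}\ref{P:countable sum of functions:ssdiff}), the one-dimensional Moreau--Rockafellar rule (correctly justified by $0 \in \inte\dom\psi_k$, which follows from differentiability of $\psi_k$ at $0$), and a sign/no-cancellation argument combining monotonicity of $\partial\psi_k$ with the exposed-face description of $\partial\sigma_{I_k}$ (Proposition \ref{P:derivative of support function}\ref{P:derivative of support function:exposed faces in boundary}) give the bound $\vert u_k\vert \geq \omega$ on the subgradient itself, hence $\vert \supp(x)\vert \leq \omega^{-2}\Vert u\Vert^2$; your no-cancellation step plays exactly the role of the paper's injectivity lemma. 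The second halves (finiteness of $\{k \ | \ -\nabla h(x)_k \in \bd I_k\}$ from square-summability of $\nabla h(x)$ and $[-\omega,\omega]\subset I_k$) are identical. What each approach buys: the paper's proof recycles the prox machinery it needs anyway for Theorem \ref{T:finite identification of the extended support}, while yours is self-contained within convex calculus and in fact establishes the cleaner, slightly stronger fact that $\dom\partial g \subset c_{00}$ with all subgradients of modulus at least $\omega$ on the support, extending Proposition \ref{P:simple facts about the problem}\ref{P:simple facts about the problem:domain is finite support seq} to the case $\psi \neq 0$. One caveat: your parenthetical shortcut via Proposition \ref{P:simple facts about the problem}\ref{P:simple facts about the problem:domain is finite support seq} is less immediate than stated, since that item concerns $\Vert\cdot\Vert_{1,\I}$ rather than $g$, and passing from a selection of $\bigoplus_k\bigl(\partial\psi_k(x_k)+\partial\sigma_{I_k}(x_k)\bigr)$ to a square-summable selection of $\bigoplus_k\partial\sigma_{I_k}(x_k)$ requires an extra argument; your main proof, which does not rely on this remark, is complete as written.
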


\begin{proof}
Let $x \in \dom \partial f = \dom \partial g$, and let $u\in \partial g(x)$ and let us start by verifying that $\supp(x)$ is finite.
Let $x^* \in \partial g(x)$, and let $y=x+x^*$.  Proposition \ref{P:simple facts about the problem}\ref{P:simple facts about the problem:prox computation} implies that for all 
$k \in \supp(x)$, $\prox_{\psi_k}\circ \soft_{I_k}(y_k) \neq 0$.
Lemma \ref{L:prox is injective at smooth minimizer} and the definition of $\soft_{I_k}$ imply that $y_k \notin  I_k$, and in particular that $\vert y_k \vert \geq  \omega$ for all $k \in \supp(x)$.  
Then we derive that
\begin{equation*}
\vert \supp(x) \vert = \omega^{-2} \sum\limits_{\footnotesize k \in \supp(x)}^{} \omega^2 \leq \omega^{-2} \sum\limits_{\footnotesize k \in \supp(x)}^{}  \vert y_k \vert^2 \leq \omega^{-2}	 \Vert y \Vert^2 < + \infty.
\end{equation*}
Next, we have to verify that $J$ is finite, where $J= \{ \kinn \ | \  -\nabla h(x)_k \in \bd I_k \}$.
If  $\mathcal{N}$ is finite, this is trivial.
Otherwise, we observe that $(\nabla h(x)_k)_\kinn \in \ell^2(\NN)$, which both implies that $\nabla h(x)_k$ tends to $0$ when ${k \to + \infty}$ in $\NN$.
Since $[-\omega,\omega] \subset I_k$, we deduce that $J$ must be finite.
\end{proof}

The following proposition clarifies the relationship between the support and the extended support for minimizers.

\begin{proposition}\label{P:relation between extended support and support} 
Let $\bar x \in \argmin f$. 
\begin{enumerate}[(i)]
	\item\label{P:relation between extended support and support:QC implication} If $0 \in \qri \partial f(\bar x)$ then $\esupp(\bar x)=\supp(\bar x)$.
\end{enumerate}
Assume that $\psi_k$ is differentiable on $\dom \partial \psi_k$,  for all $\kinn$. Then
\begin{enumerate}[resume*]
	\item\label{P:relation between extended support and support:QC} $\esupp(\bar x)=\supp(\bar x) \, \Leftrightarrow \, 0 \in \qri \partial f(\bar x)$.
\end{enumerate}
Assume moreover that $\psi \equiv 0$. Then
\begin{enumerate}[resume*]
	\item\label{P:relation between extended support and support:dual} $\esupp(\bar x) = \{ \kinn \ | \ -\nabla h(\bar x)_k \in \bd I_k \}$.
	\item\label{P:relation between extended support and support:independence} There exists ${J}\subset \mathcal{N}$ such that ${J}=\esupp(\bar x)$ for every $\bar x\in\argmin f$. 
	\item\label{P:relation between extended support and support:union of supports} $\esupp(\bar x)=\cup \{ \supp(x) \ | \ x \in \argmin f \} \, \Leftrightarrow \,  (\exists x \in \argmin f) \, \, 0 \in \qri \partial f(x)$.
\end{enumerate}
\end{proposition}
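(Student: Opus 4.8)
The engine of the whole statement is a single characterization of the quasi relative interior, which I would isolate first: for a convex set $C$ and a point $u\in C$, one has $u\in\qri C$ if and only if the normal cone $N_C(u)=\{w\in X : \langle w, v-u\rangle\le 0 \text{ for all } v\in C\}$ is a linear subspace (the subspace characterization of $\qri$ available from \cite[Section 6.2]{BauComV2}). I would apply it with $C=\partial g(\bar x)$ and $u=-\nabla h(\bar x)$, which lies in $C$ precisely because $\bar x\in\argmin f$; since $\qri$ is translation-equivariant, $0\in\qri\partial f(\bar x)$ is equivalent to $-\nabla h(\bar x)\in\qri\partial g(\bar x)$. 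The crucial structural input is that $\partial g(\bar x)$ is a \emph{box}: writing $C_k=\partial g_k(\bar x_k)=\partial\psi_k(\bar x_k)+\partial\sigma_{I_k}(\bar x_k)$, we have $C=\{w\in X : w_k\in C_k \text{ for all } k\}$, and a separation-of-variables argument (testing the definition of $N_C(u)$ against vectors differing from $u$ in a single coordinate) yields $N_C(u)=\{w\in X : w_k\in N_{C_k}(u_k)\text{ for all }k\}$. Hence $N_C(u)$ is a subspace if and only if each scalar cone $N_{C_k}(u_k)$ is a subspace of $\R$, i.e. if and only if, for every $k$, either $u_k\in\ri C_k$ or $C_k$ is a singleton.

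For (i) and (ii) I would unwind this coordinatewise. When $\bar x_k=0$, the assumption $\psi_k'(0)=0$ forces $C_k=\partial\sigma_{I_k}(0)=I_k$, a nondegenerate proper interval, so the subspace condition on $N_{C_k}(u_k)$ reads exactly $u_k\in\inte I_k$, that is $-\nabla h(\bar x)_k\notin\bd I_k$. Thus if $0\in\qri\partial f(\bar x)$ then no index outside $\supp(\bar x)$ satisfies $-\nabla h(\bar x)_k\in\bd I_k$, which is precisely $\esupp(\bar x)=\supp(\bar x)$; this proves (i), and it uses no differentiability since only indices with $\bar x_k=0$ enter. For the converse in (ii) I must also control the indices $k\in\supp(\bar x)$: there $\partial\sigma_{I_k}(\bar x_k)$ is a finite singleton (an infinite endpoint would put $\bar x_k\notin\dom g_k$), and the hypothesis that $\psi_k$ is differentiable on $\dom\partial\psi_k$ makes $\partial\psi_k(\bar x_k)$ a singleton as well, so $C_k$ is a singleton and $N_{C_k}(u_k)=\R$ is automatically a subspace. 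Hence $\esupp(\bar x)=\supp(\bar x)$ makes every $N_{C_k}(u_k)$ a subspace, giving $0\in\qri\partial f(\bar x)$ and the equivalence.

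For the remaining parts I set $\psi\equiv 0$, so $C_k=\partial\sigma_{I_k}(\bar x_k)$, with $\partial\sigma_{I_k}(t)=\{\sup I_k\}$ for $t>0$ and $\{\inf I_k\}$ for $t<0$. For (iii), optimality gives $-\nabla h(\bar x)_k\in\partial\sigma_{I_k}(\bar x_k)$, so for every $k\in\supp(\bar x)$ the value $-\nabla h(\bar x)_k$ equals an endpoint of $I_k$, hence lies in $\bd I_k$; thus $\supp(\bar x)\subset\{k : -\nabla h(\bar x)_k\in\bd I_k\}$ and the union defining $\esupp(\bar x)$ collapses to the second set. Part (iv) is then immediate from Proposition \ref{P:simple facts about the problem}\ref{P:simple facts about the problem:dual unique}: the gradient $\nabla h(\bar x)=-\bar u$ is the same for every $\bar x\in\argmin f$, so by (iii) the set $J=\{k : \bar u_k\in\bd I_k\}$ equals $\esupp(\bar x)$ for all minimizers.

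Finally, for (v) I would use that by (iv) $\esupp(\bar x)\equiv J$ is common to all minimizers and finite by Proposition \ref{P:extended support is finite}, that $\supp(x)\subset J$ for every minimizer $x$, and that by (ii) (with $\psi\equiv 0$) the condition $0\in\qri\partial f(x)$ is equivalent to $\supp(x)=J$. The statement then reduces to: $J=\bigcup_{x\in\argmin f}\supp(x)$ if and only if some minimizer has support equal to $J$. The direction $(\Leftarrow)$ is trivial. For $(\Rightarrow)$, for each of the finitely many $k\in J$ I pick a minimizer $x^{(k)}$ with $x^{(k)}_k\ne 0$ and form a strict convex combination $\bar x=\sum_{k\in J}\lambda_k x^{(k)}\in\argmin f$. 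The key non-cancellation observation, which I expect to be the main obstacle, is that the sign of the $j$-th coordinate of any minimizer is \emph{forced} by whether $\bar u_j=\sup I_j$ or $\bar u_j=\inf I_j$ (these cases are separated by $\partial\sigma_{I_j}$, and $\bar u$ is common to all minimizers); hence all nonzero contributions $x^{(k)}_j$ share one sign, no cancellation occurs, and $\supp(\bar x)=\bigcup_k\supp(x^{(k)})=J$. The delicate points throughout are the infinite-dimensional $\qri$/normal-cone computation for the box $\partial g(\bar x)$ and this sign-consistency argument.
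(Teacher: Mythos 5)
Your proof is correct, but it takes a genuinely different route from the paper's at the two points where real work is needed. For parts \ref{P:relation between extended support and support:QC implication}--\ref{P:relation between extended support and support:QC}, the paper computes $\qri \partial f(\bar x)$ explicitly, coordinate by coordinate, via a product formula for quasi-relative interiors of countable products (Proposition \ref{P:qrinterior of infinite products}, which in turn rests on the computation of $\inte \bigoplus_k I_k$ in Proposition \ref{P:interior of infinite products} and on \cite{BorLew92} for the finitely many remaining factors); you never compute $\qri$ at all, instead invoking the Borwein--Lewis characterization that $u \in \qri C$ if and only if $N_C(u)$ is a linear subspace, and computing the normal cone of the box $\partial g(\bar x)$ by testing single-coordinate perturbations. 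Both routes land on the same coordinatewise dichotomy (a nondegenerate interval factor at zero coordinates, a singleton factor at supported ones, the latter needing the differentiability hypotheses exactly as in the paper), so the content is equivalent; your version is somewhat more self-contained in that it bypasses the annex propositions on interiors and quasi-relative interiors of infinite products, while the paper's version produces an explicit formula for $\qri \partial f(\bar x)$. The divergence is greatest in the forward implication of \ref{P:relation between extended support and support:union of supports}: the paper invokes a general-purpose lemma (Lemma \ref{L:existence point support convex hull} and the lemma preceding it), valid for an arbitrary convex set, which produces a point of the convex hull whose support is the union of the supports by choosing convex-combination coefficients outside a countable set of cancellation ratios; you instead exploit the specific structure of the problem --- uniqueness of the dual solution $\bar u$ forces, via $\bar u_j \in \partial \sigma_{I_j}(x_j)$ and $\sup I_j \neq \inf I_j$, all minimizers' nonzero $j$-th coordinates to share a common sign --- so that \emph{any} strict convex combination works, with no genericity argument. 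Your sign-consistency argument is sharper in this setting, though less general than the paper's lemma; parts \ref{P:relation between extended support and support:dual} and \ref{P:relation between extended support and support:independence} coincide with the paper's proof. One cosmetic remark: in your subspace criterion the clause ``or $C_k$ is a singleton'' is redundant, since a singleton equals its own relative interior.
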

\begin{proof}[Proof of Proposition \ref{P:relation between extended support and support}]
Since $\bar x \in \argmin f \subset \dom \partial g$, it follows from Proposition \ref{P:extended support is finite} that $\supp(\bar x)$ is finite. Moreau-Rockafellar's sum rule \cite[Theorem 3.30]{Pey}, Proposition \ref{P:countable sum of functions}\ref{P:countable sum of functions:ssdiff}, Proposition \ref{P:derivative of support function}\ref{P:derivative of support function:0 gives full set} then yield
\begin{equation}\label{rbe1}
\partial f(\bar x)_k = \nabla h(\bar x)_k + \begin{cases} \partial \psi_k(\bar x_k) + \partial \sigma_{I_k}(\bar x_k) & \text{if } k\in\supp(\bar{x}) \\ {I}_k  & \text{if } k\notin \supp(\bar x).\end{cases}
\end{equation}
Since $\supp(\bar x)$ is finite and $\partial \psi_k(\bar x_k) + \partial \sigma_{I_k}(\bar x_k)$ is a closed interval of $\R$, Proposition \ref{P:qrinterior of infinite products} and Proposition \ref{P:derivative of support function}\ref{P:relative interiors:nonempty interior case} imply 
\begin{equation}\label{rbe2}
(\forall\kinn)\qquad\left(\qri \partial f(\bar x)\right)_{k} = \nabla h(\bar x)_k +\begin{cases} \ri (\partial \psi_k(\bar x_k) +  \partial \sigma_{I_k}(\bar x_k)) &\text{if } k \in \supp(\bar x) \\ 
\inte I_k &\text{if } k\notin \supp(\bar x).\end{cases} 
\end{equation}
\ref{P:relation between extended support and support:QC implication}: observe that
\begin{eqnarray}
0 \in \qri \partial f(\bar x) & \Rightarrow & (\forall k \notin \supp(\bar x)) \quad - \nabla h(\bar x)_k \in \inte I_k \label{rbe2.5}\\
& \Leftrightarrow & \{ \kinn \ | \ x_k = 0 \text{ and } - \nabla h(\bar x)_k \in\bd I_k \} = \emptyset \nonumber \\
& \Leftrightarrow & \esupp(\bar x) = \supp(\bar x). \nonumber
\end{eqnarray}
\ref{P:relation between extended support and support:QC}: note that from $0 \in \partial f(\bar x)$ and \eqref{rbe1}, we have $-\nabla h(\bar x)_k  \in \partial \psi_k(\bar x_k) + \partial \sigma_{I_k}(\bar x_k)$  for all $k \in \supp(\bar x)$.
But both $\psi_k$ and $\sigma_{I_k}$ are differentiable at $\bar x_k \neq 0$, so for all $k \in \supp(\bar x)$, $\qri (\partial \psi_k(\bar x_k) +  \partial \sigma_{I_k}(\bar x_k)) = \partial \psi_k(\bar x_k) +  \partial \sigma_{I_k}(\bar x_k) $ holds.
So we  deduce from \eqref{rbe2.5} that  item \ref{P:relation between extended support and support:QC} holds.

\ref{P:relation between extended support and support:dual}:
observe that, via \eqref{rbe1} and Proposition \ref{P:derivative of support function}\ref{P:derivative of support function:exposed faces in boundary},  for all $k \in \supp(\bar x)$, $-\nabla h(\bar x)_k \in \bd I_k$, meaning that indeed
$\esupp(\bar x) = \{ \kin \ | \ -\nabla h(\bar x)_k \in \bd I_k\}$.

\ref{P:relation between extended support and support:independence}: it follows from the uniqueness of $\nabla h(\bar x)$, see Proposition \ref{P:simple facts about the problem}\ref{P:simple facts about the problem:dual unique}.

\ref{P:relation between extended support and support:union of supports}: if  there is some $x \in \argmin f$ such that $0 \in \qri \partial f(x)$, we derive from  \ref{P:relation between extended support and support:QC} and 
\ref{P:relation between extended support and support:independence} that $\esupp(\bar x) = \supp(x)$.
So, the inclusion $\esupp(\bar x)\subset \cup \{ \supp(x') \ | \ x' \in \argmin f \}$ holds.
The reverse inclusion comes directly from the definition of $\esupp(\bar x)$ and  \ref{P:relation between extended support and support:independence}.
For the reverse inclusion, assume that $\esupp(\bar x)=\cup \{ \supp(x) \ | \ x \in \argmin f \}$ holds, and use the fact that $\esupp(\bar x)$ is finite to apply Lemma \ref{L:existence point support convex hull}, and obtain some $x \in \argmin f$ such that $\supp(x) = \esupp(\bar x)$.
We  then conclude that $0 \in \qri \partial f(x)$ using \ref{P:relation between extended support and support:independence} and \ref{P:relation between extended support and support:QC}.
\end{proof}

\begin{remark}[Extended support and active constraints]\label{R:extended support and active constraints}
Assume that $\psi = 0$.
Since   $g^*$ is the indicator function of $\B_{\infty,\I}$, in this case, the dual problem \eqref{E:Probleme Dual D} introduced in 
Proposition \ref{P:simple facts about the problem}\ref{P:simple facts about the problem:dual unique} 
can be rewritten as
\begin{equation}
\label{e:pdualsimple}
\min\limits_{\substack {u \in X\\ (\forall k\in\mathcal{N})\, u_k \in I_k}} \ h^*(-u). \tag{$\text{\rm D'}$}
\end{equation}
This problem admits a unique solution $\bar u \in \B_{\infty,\I}$, and the set of active  constraints at $\bar u$ is
\begin{equation*}
\{ \kinn \ | \ \bar u_k \in \bd I_k \}.
\end{equation*}
Since $\bar u=-\nabla h(\bar x)$ for any $\bar x \in \argmin f$ by Proposition \ref{P:simple facts about the problem}\ref{P:simple facts about the problem:dual unique}, Proposition \ref{P:relation between extended support and support}\ref{P:relation between extended support and support:dual} implies that the extended support for the solutions of \eqref{E:Probleme Primal P} is in that case nothing but the set of active constraints for the solution of \eqref{e:pdualsimple}.
\end{remark}

\begin{remark}[Maximal support and interior solution]
If $\psi = 0$ and the following (weak) qualification condition holds
\begin{equation}\label{E:weak CQ}
\tag{w-CQ} (\exists x \in \argmin f) \, \, 0 \in \qri \partial f(x),
\end{equation}
then, thanks to Lemma~\ref{L:existence point support convex hull} the extended support  is  the maximal support to be found among the solutions.
If for instance $h$ is the least squares loss on a finite dimensional space, it can be shown that the solutions having a maximal support are  the ones 
belonging to the relative interior of the solution set \cite[Theorem 2]{BarJouVai17}.
However, there are problems for which  \eqref{E:weak CQ} does not hold.
In such a case Proposition \ref{P:relation between extended support and support} implies that the extended support will be strictly larger than the maximal support (see Example \ref{Ex:noCQ}).
The gap between the maximal support and the extended support is equivalent to the lack of  duality between \eqref{E:Probleme Primal P} and \eqref{E:Probleme Dual D}.
\end{remark}

\begin{example}\label{Ex:CQ}
Let $g\colon\R^2\to\R:x\mapsto\|x\|_1$ and $h\colon\R^2\to\R\colon x\mapsto (x_1-x_2 -1)^2$.
In this case, $\argmin f = [\bar x^1,\bar x^2]$, where $\bar x^1=(0.5,0)$ and $\bar x^2 = (0,-0.5)$, as can be seen in Figure \ref{F:QC holds}.
The solutions $\bar x \in ]\bar x^1,\bar x^2[$ are the ones having the maximal support, since $\supp(\bar x) = \{1,2\}$, and also satisfy $0\in \rint \partial f(\bar x)$.
Instead, on the relative boundary of $\argmin f$ we have $\supp(\bar x^i) = \{i\}$ and $0 \notin \rint \partial f(\bar x^i)$ for $i\in\{1,2\}$.
This example is a one for which the extended support is the maximal support among the solutions.
\end{example}

\begin{example}\label{Ex:noCQ}
Let $g\colon\R\to\R:x\mapsto |x|$ and $h\colon\R\to\R\colon x\mapsto (x -1)^2/2$.
Then $\argmin f = \{\bar x\}$, with $\bar x=0$, as can be seen in Figure \ref{F:QC holds}.
The support of $\bar{x}$ is empty, and $0 \notin \rint \partial f (\bar x) = [-2,0]$.
In this case, condition \eqref{E:weak CQ} does not hold.
This can also be seen from the dual problem  $\min_{u \in [-1,1]} \ u^2/2-u$, whose unique constraint is active at the 
solution $\bar u = -\nabla h(\bar x) = 1$, meaning that $\esupp(\bar x) = \{1 \} \neq \supp(\bar x)$.
\end{example}

\begin{center}
\begin{figure}[h]

\begin{minipage}{0.3\linewidth}
\begin{center}
\includegraphics[width=0.95\linewidth]{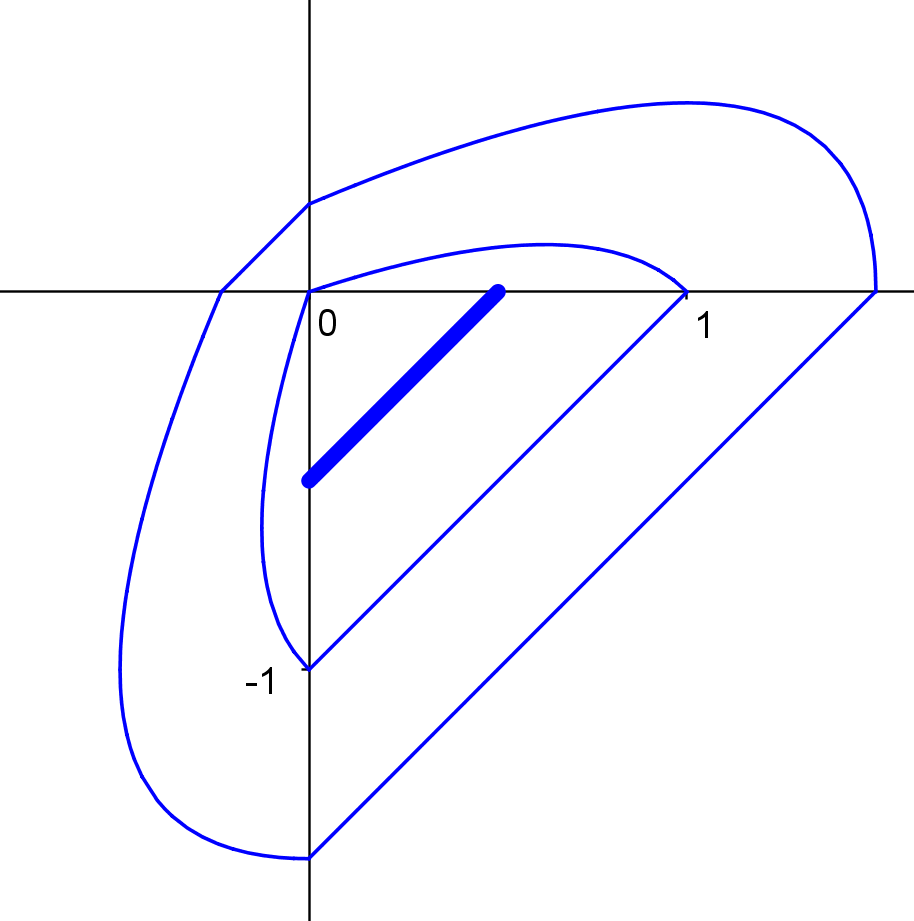}
\end{center}
\end{minipage}
\begin{minipage}{0.3\linewidth}
\begin{center}
\includegraphics[width=0.95\linewidth]{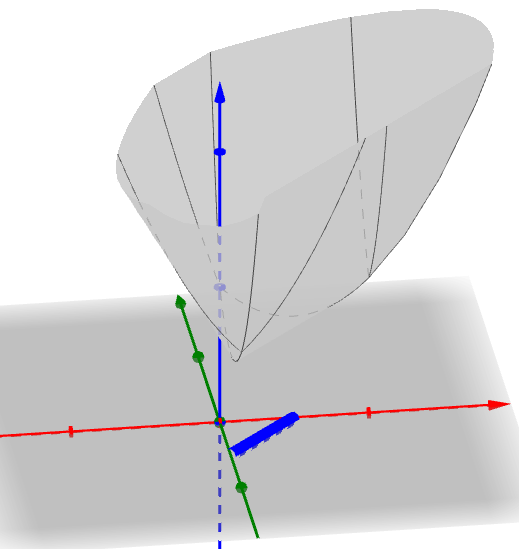}
\end{center}
\end{minipage}
\begin{minipage}{0.3\linewidth}
\begin{center}
\includegraphics[width=0.95\linewidth]{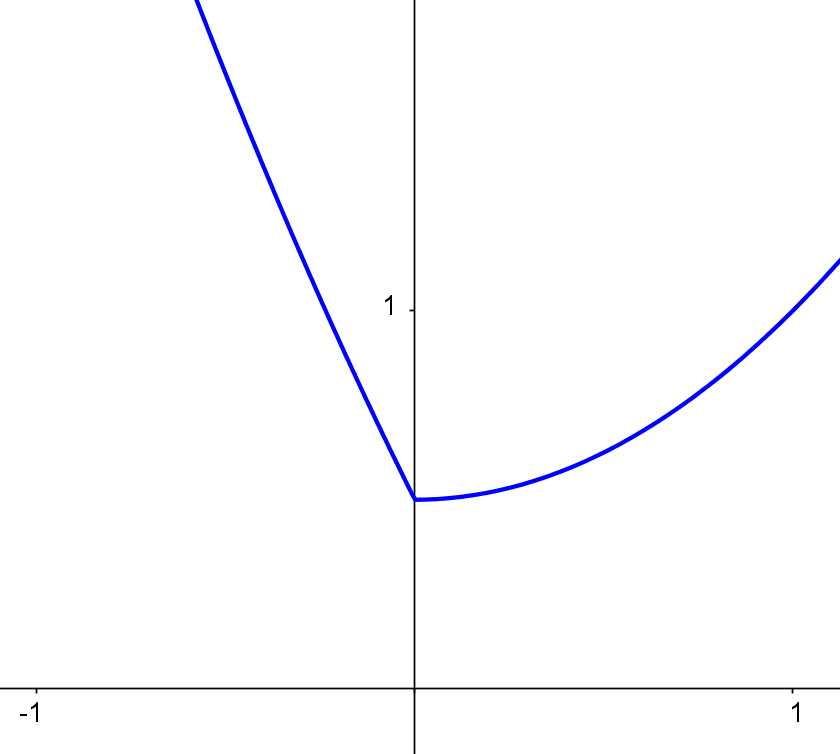}
\end{center}
\end{minipage}

\caption{Left and center: respectively level sets and graph of $f$ in Example \ref{Ex:CQ}, with $\argmin f$  in thick. Right: graph of $f$ in Example \ref{Ex:noCQ}.}\label{F:QC holds}
\end{figure}
\end{center}

\subsection{Finite identification}
A sparse solution $\bar{x}$ of problem \eqref{E:Probleme Primal P} is usually approximated by means of an iterative procedure $(x^n)_\nin$. To obtain
an interpretable approximation, a crucial property is that, after a finite number of iterations, the support of $x^n$ stabilizes 
and is included in the support of $\bar x$.
In that case, we say that the sequence $(x^n)_\nin$ \textit{identifies} $\supp(\bar x)$.
The  support identification property  has been the subject of active research in the last years \cite{HalYinZha07,Dos11,LiaFadPey14,FadMalPey17,DuvPey17}, and roughly speaking, 
 in finite dimension it is known that support identification holds whenever $\bar x$ satisfies the qualification condition $0 \in \qri \partial f(\bar x)$.
But this assumption is often not satisfied  in practice, in particular for noisy inverse problems (see e.g.  \cite{FadMalPey17}).
In \cite{HalYinZha07,DegPeyFadJac16},  the case $g(x) = \Vert x \Vert_1$  is studied in finite dimension and it is shown that  the extended support of $\bar x$ is identified even if the qualification condition does not hold. 
Thus, the  qualification condition $0 \in \qri \partial f(\bar x)$ is only used to ensure that the extended support coincides with the support (see Proposition \ref{P:relation between extended support and support}). 

In this section we extend these ideas to the setting of thresholding gradient methods in separable Hilbert spaces, and we show in Theorem \ref{T:finite identification of the extended support} that indeed the 
extended support is always identified after a finite number of iterations.
For this, we  need to introduce a quantity, which measures the stability of the dual problem \eqref{E:Probleme Dual D}.
\begin{definition}\label{D:degree of degeneracy}
We define the function $\rho \colon X \longrightarrow \R$ as follows:
\begin{equation}
(\forall u \in X) \quad \rho(u)= \inf\limits_{u_k \in \inte I_k} \dist(u_k,\bd I_k).
\end{equation}
Also, given any $\bar x \in \argmin f$, we define $\rho_{\rm sol}=\rho(-\nabla h(\bar x))$.
\end{definition}

It can be verified that $\rho(u) \in \left]0,+\infty\right[$ for all $u \in X$ (this is left in the Annex, see Proposition~\ref{P:interior of infinite products}). 
Moreover, $\rho_{\rm sol}$ is uniquely defined, thanks to Proposition \ref{P:simple facts about the problem}\ref{P:simple facts about the problem:dual unique}.

\begin{theorem}[Finite identification of the extended support]\label{T:finite identification of the extended support}
Let $(x^n)_\nin$ be generated by the Forward-Backward algorithm \eqref{D:Forward backward}, and let $\bar x$ be any minimizer of $f$.
Then,  the number of iterations for which the support of $x^n$ is not included in $\esupp(\bar x)$ is finite, and cannot exceed $ \rho_{\rm sol}^{-2}\lambda^{-2} \Vert x^0 - \bar x \Vert^2 $.
\end{theorem}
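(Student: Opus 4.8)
The plan is to treat the iteration coordinate by coordinate, reading off the defect in a Fej\'er-type inequality for $\|x^n-\bar x\|^2$ and showing that every iterate whose support escapes $\esupp(\bar x)$ must consume a fixed budget $\lambda^2\rho_{\rm sol}^2$ of that defect. Since the total available budget is $\|x^0-\bar x\|^2$, only finitely many such iterates can occur, and dividing gives the stated count.

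First I would translate the condition $k\notin\esupp(\bar x)$ into dual terms. Writing $\bar\xi_k:=-\nabla h(\bar x)_k$, the optimality condition $0\in\partial f(\bar x)$ reads coordinatewise as $\bar\xi_k\in\partial g_k(\bar x_k)$; since $\partial g_k(0)=\partial\psi_k(0)+\partial\sigma_{I_k}(0)=\{0\}+I_k=I_k$, every index with $\bar x_k=0$ satisfies $\bar\xi_k\in I_k$. If moreover $k\notin\esupp(\bar x)$, then $\bar x_k=0$ and $\bar\xi_k\notin\bd I_k$, hence $\bar\xi_k\in\inte I_k$ and, by Definition \ref{D:degree of degeneracy}, $\dist(\bar\xi_k,\bd I_k)\ge\rho_{\rm sol}$.

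The heart of the argument is the coordinate mechanism. Since $\bar x$ is a fixed point of the forward--backward map, $\bar x=\prox_{\lambda g}(\bar x-\lambda\nabla h(\bar x))$, firm nonexpansiveness of the separable operator $\prox_{\lambda g}$ together with $L^{-1}$-cocoercivity of $\nabla h$ and $\lambda<2L^{-1}$ yields
\[
\|x^{n+1}-\bar x\|^2 \le \|x^n-\bar x\|^2 - \sum_{k\in\mathcal{N}}\lambda^2\,|\xi^n_k-\bar\xi_k|^2,
\]
where $\lambda\xi^n_k=(x^n-\lambda\nabla h(x^n))_k-x^{n+1}_k$, so that $\xi^n_k\in\partial g_k(x^{n+1}_k)$ by the prox optimality condition. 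Now suppose $\supp(x^{n+1})\not\subset\esupp(\bar x)$ and pick $k\notin\esupp(\bar x)$ with $x^{n+1}_k\neq0$. If $x^{n+1}_k>0$, then $\partial\sigma_{I_k}(x^{n+1}_k)=\{\sup I_k\}$ (Proposition \ref{P:derivative of support function}) and $\partial\psi_k(x^{n+1}_k)\subset[0,+\infty[$ by monotonicity of $\partial\psi_k$ together with $\psi_k'(0)=0$; hence $\xi^n_k\ge\sup I_k$ and $\xi^n_k-\bar\xi_k\ge\sup I_k-\bar\xi_k\ge\dist(\bar\xi_k,\bd I_k)\ge\rho_{\rm sol}$. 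The case $x^{n+1}_k<0$ is symmetric. In either case $|\xi^n_k-\bar\xi_k|\ge\rho_{\rm sol}$, so the $k$-th summand alone forces
\[
\|x^{n+1}-\bar x\|^2 \le \|x^n-\bar x\|^2 - \lambda^2\rho_{\rm sol}^2 .
\]

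Finally I would count. The displayed inequality shows that $(\|x^n-\bar x\|^2)_{n}$ is nonincreasing and drops by at least $\lambda^2\rho_{\rm sol}^2$ at each index $m\ge1$ with $\supp(x^m)\not\subset\esupp(\bar x)$; telescoping against the lower bound $0$ gives that the number of such indices is at most $\|x^0-\bar x\|^2/(\lambda^2\rho_{\rm sol}^2)=\rho_{\rm sol}^{-2}\lambda^{-2}\|x^0-\bar x\|^2$, which is finite because $\rho_{\rm sol}>0$ (Proposition \ref{P:interior of infinite products}). I expect the main obstacle to be the coordinate step: establishing that activating a coordinate outside the extended support necessarily pushes the associated subgradient $\xi^n_k$ past $\bd I_k$ by at least $\rho_{\rm sol}$, which requires combining the exposed-face description of $\partial\sigma_{I_k}$ away from $0$ with the sign information on $\partial\psi_k$, and then matching this jump to the correct defect term produced by firm nonexpansiveness.
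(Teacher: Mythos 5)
Your proof is correct, and although it follows the same skeleton as the paper's (a Fej\'er-type inequality with a per-iteration defect, a uniform lower bound $\lambda^2\rho_{\rm sol}^2$ on that defect whenever the support escapes $\esupp(\bar x)$, and a telescoping count against the budget $\Vert x^0-\bar x\Vert^2$), the mechanism producing and bounding the defect is genuinely different. The paper factorizes $\prox_{\lambda g_k}=\prox_{\lambda \psi_k}\circ\soft_{\lambda I_k}$, uses plain nonexpansiveness of $\prox_{\lambda\psi_k}$ and firm nonexpansiveness of the soft-thresholding factor only, so its defect is a difference of \emph{projections} onto $\lambda I_k$; the escape event is translated via Lemma \ref{L:prox is injective at smooth minimizer} into ``pre-prox point outside $\lambda I_k$ versus fixed-point pre-prox point in $\inte \lambda I_k$'', and the bound is pure projection geometry. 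You instead apply firm nonexpansiveness to the \emph{full} operator $\prox_{\lambda g}$, so your defect $\lambda^2\Vert\xi^n-\bar\xi\Vert^2$ is a difference of implicit \emph{subgradients}, i.e. a dual quantity, and the escape event is translated into ``$\xi^n_k$ lies at or beyond $\bd I_k$'' by subdifferential calculus. This buys a primal-dual reading consistent with Remark \ref{R:extended support and active constraints}: the $\xi^n$ are dual iterates, and activating a coordinate outside the extended support forces the dual error $\vert\xi^n_k-\bar u_k\vert$ to exceed $\rho_{\rm sol}$; it also handles $\psi_k$ without ever invoking the composition formula for the prox, whereas the paper's route is more elementary. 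One touch-up: Proposition \ref{P:derivative of support function}\ref{P:derivative of support function:exposed faces in boundary} only gives $\partial\sigma_{I_k}(t)\subset\bd I_k$ for $t\neq 0$, not the identity $\partial\sigma_{I_k}(t)=\{\sup I_k\}$, and your sum rule $\partial g_k=\partial\psi_k+\partial\sigma_{I_k}$ needs the (valid, since differentiability of $\psi_k$ at $0$ puts $0$ in $\inte\dom\psi_k$) Moreau--Rockafellar qualification; both can be bypassed at once by monotonicity of $\partial g_k$: from $\xi^n_k\in\partial g_k(x^{n+1}_k)$ with $x^{n+1}_k>0$ and $\partial g_k(0)=I_k$, one gets $(\xi^n_k-s)\,x^{n+1}_k\geq 0$ for every $s\in I_k$, hence $\xi^n_k\geq \sup I_k$ directly (note $\sup I_k<+\infty$ here, since otherwise $\partial g_k(x^{n+1}_k)=\emptyset$).
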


\begin{remark}[Optimality of the identification result]
Theorem \ref{T:finite identification of the extended support} implies that after some iterations the inclusion $\supp(x^n)\subset\esupp(\bar x)$ holds.
Let us verify that it is impossible to improve the result,  i.e. that in general we cannot identify a set smaller than $\esupp(\bar x)$.
In other words, is it true that
\begin{equation}
\label{e:smartinit}
(\exists x^0 \in X)(\exists \bar x \in \argmin f)(\forall \nin) \quad \supp(x^n)=\esupp(\bar x)?
\end{equation}
If \eqref{E:weak CQ} holds, the answer is yes.
Indeed, if there is $\bar x \in \argmin f$ such that $0\in \qri \partial f(\bar x)$, we derive from Proposition \ref{P:relation between extended support and support}\ref{P:relation between extended support and support:QC implication} that $\esupp(\bar x) = \supp(\bar x)$. 
So by taking $x^0 = \bar x$, and using the fact that it is a fixed point for the Forward-Backward iterations, we conclude that $\supp(x^n) \equiv \esupp(\bar x)$.
If \eqref{E:weak CQ} does not hold, then this argument cannot be used, and it is not clear in general if there always exists an 
initialization which produces a sequence verifying \eqref{e:smartinit}.
Consider for instance the function in Example \ref{Ex:noCQ}.
Taking $x^0 \in ]0,+\infty[$ and a stepsize $\lambda \in ]0,1[$,  the iterates are defined by $x^{n+1} = (1-\lambda)x^n$, meaning that for all $\nin$, $\supp(x^n) \equiv \{1\}$, which is exactly $\esupp(\bar x)$.
So in that case \eqref{e:smartinit} holds true.
\end{remark}

\begin{proof}
Let $\bar x \in \argmin f$, and let $E=X_{\esupp(\bar x)}$ be the finite dimensional subspace of $X$ supported by  $\esupp(\bar x)$.
First define the ``gradient step'' operator
\begin{equation*}
T_{\lambda h}=\Id-\lambda \nabla h,
\end{equation*}
so that the Forward-Backward iteration can be rewritten as $x^{n+1}= \prox_{\lambda g}(T_{\lambda h}(x^n))$.
Proposition \ref{P:simple facts about the problem}\ref{P:simple facts about the problem:prox computation} implies that for all $\kinn$ and all $n \in \N^*$, 
\begin{equation}\label{ifi01}
x_k^n=\prox_{\lambda \psi_k} \circ \soft_{\lambda I_k}(T_{\lambda h}(x^{n-1 })_k).
\end{equation}
Since $\bar x$ is a fixed point for the forward-backward iteration \cite[Proposition 26.1(iv)]{BauComV2}, we also have 
\begin{equation}\label{ifi02}
\bar x_k=\prox_{\lambda \psi_k} \circ \soft_{\lambda I_k}(T_{\lambda h}(\bar x)_k).
\end{equation}
Using the fact that $\prox_{\lambda \psi_k}$ is nonexpansive, and that $\soft_{\lambda I_k}$ is firmly non-expansive  \cite[Proposition 12.28]{BauComV2}, we derive
\begin{eqnarray*}
\Vert x^n - \bar x \Vert^2 & = & \sum\limits_\kinn \vert x_k^n - \bar x_k \vert^2 \ 
\leq 
\ \sum\limits_\kinn \vert \soft_{\lambda I_k}(T_{\lambda h}(x^{n-1 })_k) - \soft_{\lambda I_k}(T_{\lambda h}(\bar x)_k) \vert^2 \\ 
& \leq & \sum\limits_\kinn \vert T_{\lambda h}(x^{n-1 })_k - T_{\lambda h}(\bar x)_k \vert^2 - \vert (\Id-\soft_{\lambda I_k})(T_{\lambda h}(x^{n-1 })_k) - (\Id-\soft_{\lambda I_k})(T_{\lambda h}(\bar x)_k) \vert^2 \\
& \leq & \Vert T_{\lambda h}(x^{n-1 }) - T_{\lambda h}(\bar x) \Vert^2  - \sigma_{n,{k}}^2,
\end{eqnarray*}
where
\begin{equation*}
\sigma_{n,{k}}=\vert (\Id-\soft_{\lambda I_k})(T_{\lambda h}(x^{n-1 })_{k}) - (\Id-\soft_{\lambda I_k})(T_{\lambda h}(\bar x)_{k}) \vert.
\end{equation*}
Moreover, the gradient step operator  $T^G$ is  non-expansive since $\lambda\in\left]0,2L^{-1}\right[$ (see e.g. \cite[Lemma 3.2]{Lem96}), so we end up with
\begin{equation}\label{ifi0}
(\forall n \in \N^*)(\forall \kinn) \quad \Vert x^n - \bar x \Vert^2 \leq \Vert x^{n-1} - \bar x \Vert^2 - \sigma_{n,{k}}^2.
\end{equation}
The key point of the proof is to get a nonnegative lower bound for $\sigma_{n,{k}}$ which is independent of $n$, when $x^n \notin E$.

Assume  that there is some $\nin^*$  such that $x^n \notin E$.
This means that there exists $\kinn \setminus \esupp(\bar x)$ such that $x^n_{k} \neq 0$.
Also, since $\supp(\bar x) \subset \esupp(\bar x)$, we must have $\bar x_{k} =0$, meaning that $T_{\lambda h}(\bar x)_k = - \lambda \nabla h(\bar x)_k$.
 We deduce from \eqref{ifi01}, \eqref{ifi02}, and Lemma \ref{L:prox is injective at smooth minimizer}, that
\begin{equation}\label{ifi03}
T_{\lambda h}(x^{n-1 })_k \notin \lambda I_k \text{ and } T_{\lambda h}(\bar x)_k \in \inte \lambda I_k.
\end{equation}
Since $\Id-\soft_{\lambda I_k}$ is the projection on $\lambda I_k$, we derive from \eqref{ifi03} that
\begin{equation*}
\sigma_{n,{k}}=\vert \proj_{\lambda I_k} (T_{\lambda h}(x^{n-1 })_{k}) - T_{\lambda h}(\bar x)_{k} \vert.
\end{equation*}
Moreover $\proj_{\lambda I_k} (T_{\lambda h}(x^{n-1 })_{k}) \in \bd I_k$, therefore by Definition \ref{D:degree of degeneracy} and \eqref{ifi03}, we obtain that
\begin{equation*}
\sigma_{n,{k}} \geq  \lambda \dist(\lambda^{-1} T_{\lambda h}(\bar x)_k, \bd I_k) \geq \lambda \rho(\lambda^{-1}T_{\lambda h}(\bar x)_k) = \lambda\rho(-\nabla h(\bar x)_k) = \lambda  \rho_{\rm sol}.
\end{equation*}
Plugging this into \eqref{ifi0}, we obtain
\begin{equation}\label{ifi05}
\forall n \in \N^*, \ x^n \notin E \ \Rightarrow \ \Vert x^n - \bar x \Vert^2 \leq \Vert x^{n-1} - \bar x \Vert^2 -     \rho_{\rm sol}^2 \lambda^2  .
\end{equation}
Next note that the sequence $(x^n)_{\nin}$ is F\'ejer monotone with respect to the minimizers of $f$ (see e.g. \cite[Theorem 2.2]{GarRosVil17}) --- meaning that $(\Vert x^n - \bar x \Vert)_\nin$ is a decreasing sequence. Therefore the inequality \eqref{ifi05} cannot hold an infinite number of times.
More precisely, $x_n \notin E$ can hold for at most  $ \lambda^{-2}  \rho_{\rm sol}^{-2}\Vert x^0 - \bar x \Vert^2 $ iterations.
\end{proof}

\section{Strong convergence and rates}\label{S:rates}

\subsection{General results for thresholding gradient methods}\label{SS:general case}

Strong convergence of the iterates for the thresholding gradient algorithm was first stated in \cite[Section 3.2]{DauDefDem04} for $g=\|\cdot\|_1$, and then
generalized to general thresholding gradient methods in \cite[Theorem 4.5]{ComPes07}.
We provide a new and simple proof for this result, exploiting the "finite-dimensionality" provided by the  identification result in 
Theorem \ref{T:finite identification of the extended support}.

\begin{corollary}[Finite dimensionality for thresholding gradient methods]\label{T:strong convergence for soft threshold gradient}
Let $(x^n)_\nin$ be generated by a thresholding gradient algorithm. 
Then:
\begin{enumerate}[(i)]
	\item\label{T:strong convergence for soft threshold gradient:finite dimensionality} There exists a finite set $J \subset \mathcal{N}$ such that $x^n \in X_J$ for all $n \in \N^*$.
	\item\label{T:strong convergence for soft threshold gradient:strong convergence} $x^n$ converges strongly to some $\bar x \in \argmin f$.
\end{enumerate}
\end{corollary}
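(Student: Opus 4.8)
The plan is to read (i) off directly from the identification Theorem \ref{T:finite identification of the extended support}, and then to upgrade the classical weak convergence of the Forward-Backward iterates to strong convergence by exploiting the resulting finite-dimensionality.

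For (i), I would first note that every iterate $x^n$ with $n \in \N^*$ has finite support: since $x^n$ lies in the range of $\prox_{\lambda g}$, it belongs to $\dom \partial g = \dom \partial f$, so $\supp(x^n)$ is finite by Proposition \ref{P:extended support is finite}. (Note that $x^0$ must be excluded here, as it is an arbitrary point of $X$.) Now fix any $\bar x \in \argmin f$. Theorem \ref{T:finite identification of the extended support} guarantees that the set $F = \{ n \in \N^* : \supp(x^n) \not\subset \esupp(\bar x) \}$ is finite, while Proposition \ref{P:extended support is finite} guarantees that $\esupp(\bar x)$ is finite. Hence
\[
J = \esupp(\bar x) \cup \bigcup_{n \in F} \supp(x^n)
\]
is a finite union of finite sets, and by construction $\supp(x^n) \subset J$ for every $n \in \N^*$, that is, $x^n \in X_J$. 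This settles (i).

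For (ii), the key observation is that $X_J$ is finite-dimensional, so that on it the weak and strong topologies coincide. By the classical convergence theory of the Forward-Backward algorithm with step $\lambda \in \left]0,2L^{-1}\right[$ (see e.g. \cite{BauComV2}), the sequence $(x^n)_\nin$ converges weakly to some $\bar x \in \argmin f$. Since all iterates lie in the closed subspace $X_J$, which is therefore weakly closed, the weak limit $\bar x$ also belongs to $X_J$; and because $X_J$ is finite-dimensional, weak convergence of $(x^n)_\nin$ to $\bar x$ inside $X_J$ is nothing but strong convergence. Should a self-contained argument be preferred, one can instead combine the Fej\'er monotonicity of $(x^n)_\nin$ with respect to $\argmin f$ (recorded in the proof of Theorem \ref{T:finite identification of the extended support}) with the standard fact that $\Vert x^{n+1} - x^n \Vert \to 0$: in the finite-dimensional space $X_J$ the bounded sequence admits a strongly convergent subsequence whose limit, by continuity of the Forward-Backward operator $\prox_{\lambda g}\circ(\Id - \lambda \nabla h)$, is a fixed point and hence a minimizer; Fej\'er monotonicity with a cluster point in $\argmin f$ then forces convergence of the whole sequence.

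Since the identification Theorem performs the substantive work, none of the steps is genuinely difficult. The points demanding care are purely the supporting finiteness facts feeding (i) --- that each iterate has finite support and that $\esupp(\bar x)$ is finite --- and, in (ii), the verification that the weak limit lies in $X_J$, which is exactly where the finite-dimensionality produced by (i) is exploited.
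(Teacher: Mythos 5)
Your proposal is correct and follows essentially the same route as the paper: part (i) is obtained by taking $J$ to be the union of $\esupp(\bar x)$ with the (finitely many, by Theorem \ref{T:finite identification of the extended support}) supports of iterates not contained in it, each finite by Proposition \ref{P:extended support is finite}, and part (ii) upgrades the classical weak convergence of the Forward--Backward iterates to strong convergence using the finite-dimensionality of $X_J$. Your added details (that $x^n \in \dom\partial g$ because it lies in the range of $\prox_{\lambda g}$, and that the weak limit belongs to the weakly closed subspace $X_J$) merely make explicit what the paper leaves implicit.
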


\begin{proof}
\ref{T:strong convergence for soft threshold gradient:finite dimensionality}: let $x \in \argmin f$ and let
\begin{equation*}
J= \esupp (x)  \bigcup \{ \supp(x^n) \ | \ \nin^*, x^n \notin X_{\footnotesize \esupp(x)} \},
\end{equation*}
and observe that it is finite, as a finite union of finite sets (see Proposition \ref{P:extended support is finite} and Theorem \ref{T:finite identification of the extended support}).

\ref{T:strong convergence for soft threshold gradient:strong convergence}: it is well known that $\argmin f \neq \emptyset$ implies  that $(x^n)_{\nin}$ converges weakly towards 
some $\bar x \in \argmin f$ (see e.g. \cite[Theorem 2.2]{GarRosVil17}).
In particular, $(x^n)_{\nin}$ is a bounded sequence in $X$.
Moreover, \ref{T:strong convergence for soft threshold gradient:finite dimensionality} implies that $(x^n)_{n \in \N^*}$ belongs to $X_J$, which is finite dimensional.
This two facts imply that $(x^n)_{n \in \N^*}$ is contained in a compact set of $X$ with respect to the strong topology, and thus converges strongly.
\end{proof}

Next we discuss the rate of convergence for the thresholding gradient methods.
Beforehand, we briefly recall how the geometry of a function around its minimizers is related to the rates of convergence of the Forward-Backward algorithm.

\begin{definition}\label{D:conditioning}
Let $p \in [2,+\infty[$ and $\Omega \subset X$.
We say that $\phi \in \Gamma_0(X)$ is $p$-conditioned on $\Omega$ if
\begin{equation*}
(\exists \gamma_{\phi,\Omega} >0)(\forall x \in \Omega) \quad \frac{\gamma_{\phi,\Omega}}{p}\dist(x,\argmin \phi)^p \leq \phi(x) - \inf \phi.
\end{equation*}
\end{definition}

\noindent A $p$-conditioned function is a function which somehow behaves like $\dist(\cdot,\argmin \phi)^p$ on a set.
For instance, strongly convex functions are $2$-conditioned on $\Omega = X$, and the constant $\gamma_{\phi,X}$ is nothing but the constant of strong convexity.
But the notion of $p$-conditioning is more general and also describes the geometry of functions having more than one minimizer. 
For instance in finite dimension, any  positive quadratic function is $2$-conditioned on $\Omega = X$, in which case the constant $\gamma_{\phi,X}$ is  the smallest \textit{nonzero} eigenvalue of the hessian.
This notion is  interesting since it allows to get precise convergence rates for some algorithms (including the Forward-Backward one) \cite{AttBolSva13}:
\begin{itemize}
	\item sublinear rates if $p >2$,
	\item linear rates if $p=2$.
\end{itemize}
For more examples, related notions and references,  we refer the interested reader to  \cite{DruLew16,BlaBol16,GarRosVil17}.

Corollary \ref{T:strong convergence for soft threshold gradient}   highlights the fact that the behavior of the thresholding gradient method 
 essentially depends on the conditioning of $f$ on finitely supported subspaces.
It is then natural to introduce the following notion of finite uniform conditioning.

\begin{definition}
Let $p \in [2, +\infty[$. We say that a function $\phi \in \Gamma_0(X)$ satisfies the \textit{finite uniform conditioning property of order }$p$ if, 
for every finite $J \subset \mathcal{N}$, $\forall \bar x \in \argmin \phi$, $ \forall (\delta,r) \in ]0,+\infty[^2$, $\phi$ is $p$-conditioned on $X_J \cap \B_X(\bar x,\delta) \cap S_\phi(r)$.
\end{definition}

\begin{remark}\label{R:FUG trivial if no minimizers}
In this definition, we only need information about $\phi$ over supports $J$ satisfying $\argmin \phi \cap X_J \neq \emptyset$.
Indeed, if $\argmin \phi \cap X_J = \emptyset$, then $\phi$ is $p$-conditioned on  $X_J \cap \B_X(\bar x,\delta) \cap S_\phi(r)$ for 
any $(\delta,r)$ and for all $p \in [2,+\infty[$ according to \cite[Proposition 3.4]{GarRosVil17}.
\end{remark}

In the following theorem, we illustrate how  finite uniform conditioning  guarantees global rates of convergence for the threshold gradient methods: linear rates if $p=2$, and sublinear rates for $p>2$.
Note that these sublinear rates are better than the $O(n^{-1})$ rate guaranteed in the worst case.

\begin{theorem}[Convergence rates for threshold gradient methods]\label{T:CV thresholding gradient method} 
Let $(x^n)_\nin$ be generated by the Forward-Backward algorithm \eqref{D:Forward backward}, and let $\bar x \in \argmin f$ be its (weak) limit. Then the following hold.
\begin{enumerate}[(i)]
\item \label{T:CV thresholding gradient method i}
 If $f$ satisfies the finite uniform conditioning property of order $2$, then there  exist  $\eps \in \left]0,1\right[$ and $C \in\left]0,+\infty\right[$, depending on $(\lambda,f,x^0)$, such that
\begin{equation*}
(\forall n \geq 1) \quad f(x^n) - \inf f \leq \eps^n (f(x^0)-\inf f) \quad \text{ and } \quad  \Vert x^{n+1} - \bar x \Vert \leq C \sqrt{\eps}^n.
\end{equation*}
\item \label{T:CV thresholding gradient method ii}
If $f$ satisfies the finite uniform conditioning property of order $p>2$, then  there  exist  $(C_1,C_2)\in\left]0,+\infty\right[^2$, depending on $(\lambda,f,x^0)$, such that
\begin{equation*}
(\forall n \geq 1) \quad f(x^n) - \inf f \leq C_1{n^{-\frac{p}{p-2}}} \quad \text{ and }\quad  \Vert x_{n+1} - x_\infty \Vert \leq  C_2{n^{-\frac{1}{p-2}}}.
\end{equation*}
\end{enumerate}
\end{theorem}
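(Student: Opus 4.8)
The plan is to reduce the statement to the abstract implication ``$p$-conditioning $\Rightarrow$ rates'' for the Forward--Backward algorithm recorded in \cite{AttBolSva13,GarRosVil17}, the only genuinely new ingredient being the finite-dimensional confinement granted by Corollary~\ref{T:strong convergence for soft threshold gradient}. First I would invoke that corollary to obtain a finite set $J\subset\mathcal{N}$ with $x^n\in X_J$ for every $n\in\N^*$, together with the strong convergence $x^n\to\bar x$. Since $J$ is finite, $X_J$ is finite dimensional, hence closed, so the limit satisfies $\bar x\in\argmin f\cap X_J$; in particular this intersection is nonempty and the conditioning hypothesis will be used non-trivially (cf. Remark~\ref{R:FUG trivial if no minimizers}).

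Next I would trap the whole trajectory inside a single set carrying the conditioning inequality. The sequence is Fej\'er monotone with respect to $\argmin f$ (as recalled in the proof of Theorem~\ref{T:finite identification of the extended support}), so $\Vert x^n-\bar x\Vert\leq\Vert x^0-\bar x\Vert=:\delta$ for all $n$, i.e. $x^n\in\B_X(\bar x,\delta)$. Moreover, for $\lambda\in\left]0,2L^{-1}\right[$ the Forward--Backward step obeys the sufficient decrease inequality $f(x^{n+1})\leq f(x^n)-a\Vert x^{n+1}-x^n\Vert^2$ with $a=\lambda^{-1}-L/2>0$, so $(f(x^n))_n$ is nonincreasing and finite for $n\geq 1$; hence $f(x^n)-\inf f<r$ for every $n\geq 1$ and any fixed $r>f(x^1)-\inf f$, giving $x^n\in S_f(r)$. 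Combining the three inclusions yields $x^n\in\Omega:=X_J\cap\B_X(\bar x,\delta)\cap S_f(r)$ for every $n\in\N^*$. Applying the finite uniform conditioning property of order $p$ to this particular quadruple $(J,\bar x,\delta,r)$ then produces a constant $\gamma>0$ with $\tfrac{\gamma}{p}\dist(x,\argmin f)^p\leq f(x)-\inf f$ on $\Omega$, hence along the entire tail of the sequence.

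With conditioning secured along the trajectory, I would feed three facts into the standard \L ojasiewicz/conditioning recursion: the sufficient decrease above; the Forward--Backward relative-error bound $\dist(0,\partial f(x^{n+1}))\leq(\lambda^{-1}+L)\Vert x^{n+1}-x^n\Vert$, coming from $\tfrac{1}{\lambda}(x^n-x^{n+1})-\nabla h(x^n)+\nabla h(x^{n+1})\in\partial f(x^{n+1})$; and the conditioning on $\Omega$, which for the convex $f$ converts, via the subgradient bound $f(x)-\inf f\leq\dist(0,\partial f(x))\,\dist(x,\argmin f)$, into a \L ojasiewicz inequality of exponent $1-1/p$. This is exactly the setting of \cite{AttBolSva13,GarRosVil17} and delivers the function-value estimates: linear decay $f(x^n)-\inf f\leq\eps^n(f(x^0)-\inf f)$ when $p=2$, and $f(x^n)-\inf f\leq C_1 n^{-p/(p-2)}$ when $p>2$. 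The iterate rates then follow from the tail-length estimate $\Vert x^{n+1}-\bar x\Vert\leq\sum_{k\geq n+1}\Vert x^{k+1}-x^k\Vert\leq C(f(x^{n+1})-\inf f)^{1/p}$ established in the same framework: substituting the function-value rate gives $C\sqrt{\eps}^n$ when $1/p=1/2$, and $C_2 n^{-1/(p-2)}$ for general $p$.

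The step requiring the most care --- and the main obstacle --- is this transfer to the cited results, which are typically stated under conditioning on \emph{sublevel sets} or globally, whereas here the inequality is available \emph{only} on $\Omega$. One must verify that the recursion producing the rates evaluates the conditioning inequality exclusively at the iterates $x^n$, all of which lie in $\Omega$, so that conditioning along the trajectory suffices; and one must track that the resulting constants $\eps,C,C_1,C_2$ depend on the data only through $\gamma$, $\delta$, $r$ and $a$, hence only through $(\lambda,f,x^0)$. The remaining ingredients --- Fej\'er monotonicity, sufficient decrease, the relative-error bound, and the telescoping yielding the tail-length estimate --- are routine and already available in \cite{AttBolSva13,GarRosVil17}.
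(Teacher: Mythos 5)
Your proof is correct and follows essentially the same route as the paper: confine the iterates to $\Omega = X_J \cap \B_X(\bar x,\delta)\cap S_f(r)$ via Corollary~\ref{T:strong convergence for soft threshold gradient} together with the Fej\'er and descent properties of Forward--Backward, then invoke the abstract conditioning-implies-rates machinery. The ``main obstacle'' you flag is already resolved by the paper's citation, since \cite[Theorem 4.2]{GarRosVil17} is stated precisely for sequences contained in a set $\Omega$ on which the function is $p$-conditioned, so the recursion you unpack by hand (sufficient decrease, relative-error bound, \L ojasiewicz inequality on $\Omega$) is exactly what that theorem encapsulates.
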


\begin{proof}
According to Corollary \ref{T:strong convergence for soft threshold gradient}, there exists a finite set $J \subset \mathcal{N}$ such that for all $n \geq 1$, $x^n \in X_J$, and $x^n$ converges strongly to $\bar x \in \argmin f$.
Also, the decreasing and F\'ejer properties of the Forward-Backward algorithm (see e.g. \cite[Theorem 2.2]{GarRosVil17}) tells us that for all $\nin$, $x^n \in \B_X(\bar x,\delta) \cap S_f(r)$, by taking $\delta=\Vert x^0 - \bar x \Vert$ and $r= f(x^0) - \inf f$.
Therefore, thanks to the finite uniform conditioning assumption, we can apply \cite[Theorem 4.2]{GarRosVil17} to the sequence $(x^{n+1})_\nin \subset \Omega=X_J \cap \B_X(\bar x,\delta) \cap S_f(r)$ and conclude.
\end{proof}

\subsection{$\ell^1$ regularized least squares}\label{SS:particular case}
Let $A\colon X\to Y$ be a linear operator from $X$ to a separable Hilbert space $Y$, and let $y \in Y$.
In this section, we discuss the particular case when $h(x)=\frac{1}{2}\Vert Ax - y \Vert_Y^2$ and 
$\psi \equiv 0$. The function in \eqref{E:Probleme Primal P} then becomes
\begin{equation*}
X\ni x\mapsto f(x) = \Vert x \Vert_{1,\I} + \frac{1}{2}\Vert Ax-y\Vert_Y^2,
\end{equation*}
and the Forward-Backward algorithm specializes to the iterative soft-thresholding algorithm (ISTA).
In this special case, linear convergence rates have been studied under additional assumptions on
the operator $A$. A common one is injectivity of $A$ or, more generally,  the so-called Finite Basis Injectivity property (FBI) \cite{BreLor08b}.
The FBI requires $A$ to be injective once restricted to $X_J$, for any finite $J \subset \mathcal{N}$.
It is clear that the FBI property implies that $h$  is a strongly convex function once restricted to each $X_J$, meaning that the finite uniform conditioning of order $2$ holds.
So, the linear rates obtained in \cite[Theorem 1]{BreLor08b} under the FBI assumption can be directly derived from Theorem \ref{T:CV thresholding gradient method}.
However, as can be seen  in Theorem \ref{T:CV thresholding gradient method} , strong convexity is not necessary to get linear rates, and the finite uniform $2$-conditioning is 
 a sufficient condition (and it is actually  necessary, see \cite[Proposition 4.18]{GarRosVil17}).
By using Li's Theorem on convex piecewise polynomials \cite[Corollary 3.6]{Li13}, we show in Proposition \ref{P:LASSO is FUG} below  that 
$f$ satisfies a  finite uniform conditioning of order $2$ on finitely supported subsets,  without doing \textit{any assumption} on the problem.
First, we need a technical Lemma which establishes the link between the conditioning of a function on a finitely supported space and the conditioning of its restriction to this space.

\begin{lemma}\label{L:finite to infinite conditioning}
Let $\phi \in \Gamma_0(X)$, let $m\in\N^*$ and let $J=\{k_1,\ldots,k_{m}\} \subset \mathcal{N}$.  Suppose that $\bar x \in \argmin \phi \cap X_J$.  Let
$\Xi: \R^{m} \rightarrow X_J\colon (u_1,...,u_m)\mapsto\sum_{i=1}^{{m}} u_i e_{k_i}$.
 Assume that, for every $ (\delta,r) \in ]0,+\infty[^2$,
\begin{equation*}
\text{$\phi_J = \phi \circ \Xi\in \Gamma_0(\R^{m})$ is $p$-conditioned on 
$\B_{\R^{m}}(\Xi^{-1}(\bar{x}),\delta)\cap S_{\phi_J}(r)$}
\end{equation*}
Then  $\phi$ is $p$-conditioned on $X_J$.
\end{lemma}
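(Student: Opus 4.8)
The plan is to push the conditioning from the finite-dimensional model $\phi_J$ on $\R^m$ back to $\phi$ on $X_J$ through the coordinate map $\Xi$, the one genuinely delicate point being the comparison between the two distance-to-minimizer functions.

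First I would record the elementary properties of $\Xi$. Since $(e_{k_1},\dots,e_{k_m})$ is an orthonormal family of $X$, the map $\Xi$ is a linear isometry from $\R^m$ onto $X_J$, so $\Vert \Xi(u)-\Xi(u')\Vert = \vert u-u'\vert$ for all $u,u'\in\R^m$, and $\phi_J=\phi\circ\Xi$ takes on $\R^m$ exactly the values that $\phi$ takes on $X_J$. Because $\bar x\in\argmin\phi\cap X_J$, the global infimum of $\phi$ is attained inside $X_J$; hence
\begin{equation*}
\inf \phi_J = \inf_{x\in X_J}\phi(x)=\inf\phi \quad\text{and}\quad \argmin\phi_J=\Xi^{-1}(\argmin\phi\cap X_J).
\end{equation*}
Together with the isometry property, these identities let me match the two families of test sets: writing $x=\Xi(u)$, one has $x\in\B_X(\bar x,\delta)$ if and only if $u\in\B_{\R^m}(\Xi^{-1}(\bar x),\delta)$, and $x\in S_\phi(r)$ if and only if $u\in S_{\phi_J}(r)$.

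The crux is the distance estimate. For $x=\Xi(u)\in X_J$ the isometry gives $\dist(u,\argmin\phi_J)=\inf\{\Vert x-z\Vert : z\in\argmin\phi\cap X_J\}$, whereas $\dist(x,\argmin\phi)$ is the infimum of the same quantity over the possibly larger set $\argmin\phi$. Taking an infimum over a larger set can only decrease it, so
\begin{equation*}
\dist(x,\argmin\phi)\leq \dist(\Xi^{-1}(x),\argmin\phi_J).
\end{equation*}
This is exactly the inequality in the favourable direction: a minimizer of $\phi$ lying outside $X_J$ only shortens the distance, and since I need to bound $\dist(x,\argmin\phi)^p$ from above, this works in my favour.

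It then remains to compose. Fixing $(\delta,r)\in\left]0,+\infty\right[^2$, let $\gamma>0$ be the conditioning constant of $\phi_J$ on $\B_{\R^m}(\Xi^{-1}(\bar x),\delta)\cap S_{\phi_J}(r)$ granted by the hypothesis. For $x\in X_J\cap\B_X(\bar x,\delta)\cap S_\phi(r)$ the matching of test sets places $u=\Xi^{-1}(x)$ in the region where that estimate is valid, so
\begin{equation*}
\frac{\gamma}{p}\dist(x,\argmin\phi)^p \leq \frac{\gamma}{p}\dist(u,\argmin\phi_J)^p \leq \phi_J(u)-\inf\phi_J = \phi(x)-\inf\phi,
\end{equation*}
the first inequality being the distance estimate and the second the assumed conditioning of $\phi_J$. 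As $(\delta,r)$ was arbitrary, $\phi$ is $p$-conditioned on $X_J\cap\B_X(\bar x,\delta)\cap S_\phi(r)$ for every $(\delta,r)$, which is the conditioning of $\phi$ on $X_J$ needed downstream. The only real obstacle is getting the direction of the distance comparison right; everything else is bookkeeping about the isometry $\Xi$ and the matching of balls and sublevel sets.
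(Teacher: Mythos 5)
Your proof is correct and follows essentially the same route as the paper's: identify $\inf\phi_J$ with $\inf\phi$ and $\argmin\phi_J$ with $\Xi^{-1}(\argmin\phi\cap X_J)$ via the isometry $\Xi$, match the balls and sublevel sets, and conclude using the fact that $\dist(x,\argmin\phi)\leq\dist(x,\argmin\phi\cap X_J)$ since the latter is a distance to a smaller set. Your write-up is in fact slightly more explicit than the paper's about the direction of that distance comparison, which is indeed the one point where a sign error would sink the argument.
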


\begin{proof}
Assume without loss of generality that $k_1<...<k_m$.
Also, observe that $\bar x \in {X_J} \cap \argmin \phi$ implies that $\bar{u}=\Xi^{-1}(\bar x)$ is well-defined.
By definition, $\inf \phi \leq \inf \phi_J$, and
\begin{equation*}\label{pli3}
\inf \phi= \phi(\bar x) = \phi \circ \Xi(\bar u) = \phi_J(\bar u) \geq \inf \phi_J,
\end{equation*}
which implies  $\inf \phi = \inf \phi_J$.
Also, we have
\begin{equation*}
x \in \Xi(\argmin \phi_J) \Leftrightarrow 
x = \Xi(u) \text{ and } \phi_J(u) = \inf \phi_J
\Leftrightarrow
x \in X_J \text{ and } \phi(x) = \inf \phi,
\end{equation*}
meaning that $\Xi(\argmin \phi_J) = \argmin \phi \cap {X_J}.$
Let $(\delta,r)\in\left]0,+\infty\right[^2$, and let $\Omega=X_J \cap \B_X(\bar x,\delta) \cap S_\phi(r)$.
Since $\phi_J$ is $p$-conditioned on $\B_{\R^{m}}(\Xi^{-1}(\bar{x}),\delta)\cap S_{\phi_J}(r)$ 
there exists  $\gamma\in\left]0,+\infty\right[$ such that
\begin{equation}\label{liq1}
(\forall u \in \B_{\R^m}(\bar u,\delta)\cap S_{\phi_J}(r)) \quad \frac{\gamma}{p}\dist(u,\argmin \phi_J)^{p} \leq \phi_J(u) - \inf \phi_J.
\end{equation}
Let $x=\Xi(u)$ in \eqref{liq1}. Since $\|\Xi\|=1$, it is easy to see that $\Vert x - \bar x \Vert \leq \delta$ and $\phi(x) - \inf \phi < r$.
So we can rewrite \eqref{liq1} as:
\begin{equation*}\label{liq2}
(\forall x \in \Omega) \quad \frac{\gamma}{p}\dist(\Xi^{-1}x,\argmin \phi_J)^{p} \leq \phi(x) - \inf \phi.
\end{equation*}
It follows from $\Xi(\argmin \phi_J) = \argmin \phi \cap {X_J}$ that
\begin{equation*}\label{liq3}
(\forall x \in \Omega) \quad \phi(x) - \inf \phi 
\geq \frac{\gamma}{p} \dist(x, \argmin \phi \cap X_J)^{p} 
\geq \frac{\gamma}{p}  \dist(x, \argmin f)^{p} .
\end{equation*}
Therefore $\phi$ is ${p}$-conditioned on $\Omega$.
\end{proof}

\begin{proposition}[Conditioning of $\ell^1$ regularized least squares]
\label{P:LASSO is FUG}
Let $(Y,\|\cdot\|_Y)$ be  a separable Hilbert space, let $y\in Y$ and let $A\colon X\to Y$ be a bounded linear operator.
In assumption \eqref{H:main structural assumption f=g+h} suppose that for every $\kinn$, $I_k \in \I$ is bounded.
Then $X\ni x\mapsto f(x)=\Vert x \Vert_{1,\I} + \frac{1}{2}\Vert Ax-y\Vert_Y^2$ has a finite uniform conditioning of order $2$.
\end{proposition}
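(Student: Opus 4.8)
The plan is to reduce the claim to a statement about finite-dimensional restrictions of $f$ and then to invoke Li's theorem on convex piecewise polynomials. Fix a finite set $J = \{k_1, \dots, k_m\} \subset \mathcal{N}$ and let $\Xi \colon \R^m \to X_J$ be the isometry appearing in Lemma \ref{L:finite to infinite conditioning}. If $\argmin f \cap X_J = \emptyset$, then $f$ is trivially $2$-conditioned on $X_J \cap \B_X(\bar x, \delta) \cap S_f(r)$ for every $\bar x \in \argmin f$ and every $(\delta, r)$, by Remark \ref{R:FUG trivial if no minimizers}. So I may assume $\argmin f \cap X_J \neq \emptyset$, fix $\bar x$ in this set, and put $\bar u = \Xi^{-1}(\bar x) \in \argmin f_J$, where $f_J = f \circ \Xi$. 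By Lemma \ref{L:finite to infinite conditioning} it then suffices to show that $f_J$ is $2$-conditioned on $\B_{\R^m}(\bar u, \delta) \cap S_{f_J}(r)$ for every $(\delta, r)$; I will in fact prove $2$-conditioning of $f_J$ on the whole sublevel set $S_{f_J}(r)$, of which these sets are subsets.

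The key observation is that $f_J$ is a convex piecewise polynomial of degree at most $2$ on $\R^m$. Indeed, since here $\psi \equiv 0$ and $h = \tfrac12\|A\,\cdot - y\|_Y^2$, one has $f_J(u) = \sum_{i=1}^m \sigma_{I_{k_i}}(u_i) + \tfrac12\|A\Xi(u) - y\|_Y^2$. The fidelity term is a genuine convex polynomial of degree $2$ in $u$, because $\Xi$ is linear and $A$ is bounded. Each interval $I_{k_i}$ is bounded by hypothesis, say $I_{k_i} = [a_i, b_i]$, so that $\sigma_{I_{k_i}}(t) = \max\{a_i t, b_i t\}$ is convex and piecewise linear. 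Hence, on the polyhedral subdivision of $\R^m$ induced by the signs of the coordinates $u_i$, the function $f_J$ agrees with a polynomial of degree at most $2$; being also convex and finite-valued, $f_J$ is a convex piecewise polynomial of degree at most $2$ with $\argmin f_J \neq \emptyset$.

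I would then apply \cite[Corollary 3.6]{Li13}: a convex piecewise polynomial on $\R^m$ whose pieces have degree at most $2$ and whose infimum is attained satisfies a global error bound with exponent $2$, which is exactly the statement that for each $r$ there is $\gamma_r > 0$ with $\tfrac{\gamma_r}{2}\dist(u, \argmin f_J)^2 \leq f_J(u) - \inf f_J$ for all $u \in S_{f_J}(r)$, i.e.\ that $f_J$ is $2$-conditioned on $S_{f_J}(r)$. Restricting to the balls $\B_{\R^m}(\bar u, \delta)$ yields the hypothesis of Lemma \ref{L:finite to infinite conditioning}, which transfers $2$-conditioning back to $f$ on $X_J$. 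Since $\dist(\cdot, \argmin f) \leq \dist(\cdot, \argmin f \cap X_J)$, working with the whole sublevel set also covers the centers $\bar x \in \argmin f$ lying outside $X_J$. As $J$ was an arbitrary finite set, this establishes the finite uniform conditioning of order $2$.

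The main obstacle is the precise invocation of Li's theorem: the general error bound theory for convex piecewise polynomials produces a Hölderian exponent depending on the degree and the ambient dimension, and the crucial point is that for degree at most $2$ this exponent collapses to exactly $2$ (quadratic growth), which is what $2$-conditioning requires --- anything weaker would yield only sublinear rates. A secondary, more routine, difficulty is the bookkeeping needed because the definition of finite uniform conditioning quantifies over all minimizers $\bar x$, including those outside $X_J$; this is dispatched by passing through the full sublevel set and the monotonicity of the distance function noted above.
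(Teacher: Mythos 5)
Your proof is correct and follows essentially the same route as the paper's: reduce to the restriction $f_J = f\circ\Xi$ on $\R^m$ via Lemma \ref{L:finite to infinite conditioning}, observe that $f_J$ is a convex piecewise polynomial of degree at most $2$ (bounded intervals give piecewise-linear support functions), apply \cite[Corollary 3.6]{Li13} to get $2$-conditioning on sublevel sets, and transfer back. The only difference is cosmetic: the paper factors $h_J$ through $S_J=(A_J^*A_J)^{1/2}$ to exhibit it as a finite-dimensional least squares, whereas you simply note directly that $u\mapsto\tfrac12\Vert A\Xi u-y\Vert_Y^2$ is a convex quadratic polynomial on $\R^m$, which suffices; your explicit handling of the case $\argmin f\cap X_J=\emptyset$ and of centers $\bar x\notin X_J$ is also consistent with what the paper leaves implicit.
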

\begin{proof}
Let $J \subset \mathcal{N}$, $J=\{k_1,\ldots,k_m\}$, with $k_1<\ldots<k_m$, and suppose that $\argmin f\cap X_J\neq\varnothing$. Define, using the same notation as in Lemma \ref{L:finite to infinite conditioning}
\begin{equation*}\label{pli1}
h_J\colon\mathbb{R}^m\to \mathbb{R}\colon u\mapsto\frac{1}{2}\Vert A\Xi u - y \Vert_Y^2.
\end{equation*}
Define $A_J=A\Xi: \R^m \to Y$, and let $S_J=(A^*_J A_J)^{1/2}$, which verifies $R(S_J^*) = R(A^*_J)$.
Thus, there exists $y_J \in \R^m$ such that $A^*_J y = S^*_J y_J$, so that we can rewrite
\begin{equation}\label{pli2}
h_J(u) = \frac{1}{2}\Vert A_J u \Vert^2_{\R^m} + \frac{1}{2} \Vert y \Vert_Y^2 - \langle A_J u,y \rangle_Y = \frac{1}{2} \Vert S_J u - y_J \Vert^2_{\R^m} + \frac{1}{2} (\Vert y \Vert^2_Y - \Vert y_J\Vert^2_{\R^m}).
\end{equation}
Set $s_k=S_Je_k \in Y$. Then, \eqref{pli2} yields
\begin{equation*}
f_J(u) =
 \sum\limits_{i=1}^{m} \sigma_{I_{k_i}}(u_i) 
 +\frac{1}{2}\sum\limits_{i,j=1}^{m} \langle s_{k_i}, s_{k_j} \rangle_Y u_i u_j 
 - \sum\limits_{i=1}^{m} {(S_J^*y_J)}_{i} u_i  
 + \frac{1}{2}\Vert y \Vert^2_Y.
\end{equation*}
Since the intervals $I_k$ are bounded, their support functions are finite valued and piecewise linear, so $f_J$ is a piecewise polynomial of degree two in $\R^m$.
We  then apply \cite[Corollary 3.6]{Li13} to derive that $f_J$ is $2$-conditioned on $S_{f_J}(r)$, for any $r\in\left]0,+\infty\right[$.
We conclude by using Lemma \ref{L:finite to infinite conditioning}.
\end{proof}

 Combining Theorem \ref{T:CV thresholding gradient method} and Proposition \ref{P:LASSO is FUG}, we can now  state our main result concerning the linear rates of ISTA. 

\begin{theorem}[Linear convergence for the iterative soft thresholding]\label{T:CV linear for ISTA}
Under the assumptions of Proposition~\ref{P:LASSO is FUG},  let $(x^n)_\nin$ be  the sequence generated by the forward-backward algorithm applied 
to $ f$.
Then $(x^n)_\nin$ converges strongly to some $\bar x \in \argmin f$, and there exists two constants $\eps \in \left]0,1\right[$ and $C \in\left]0,+\infty\right[$, depending on $(\lambda,L,x^0,\I,A,y)$, such that
\begin{equation*}
(\forall n \geq 1) \quad f(x^n) - \inf f \leq \eps^n (f(x^0)-\inf f) \quad \text{ and } \quad  \Vert x^{n+1} - \bar x \Vert \leq C \sqrt{\eps}^n.
\end{equation*}
\end{theorem}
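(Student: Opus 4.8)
The plan is to assemble the statement directly from the two results already established in this section, since no genuinely new argument is required: Proposition~\ref{P:LASSO is FUG} supplies exactly the geometric hypothesis needed, and Theorem~\ref{T:CV thresholding gradient method} converts that hypothesis into the announced rates.

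First I would check that the present setting genuinely falls under the thresholding gradient framework. With $h(x)=\frac{1}{2}\Vert Ax-y\Vert_Y^2$ and $\psi\equiv 0$, the nonsmooth part reduces to $g=\Vert\cdot\Vert_{1,\I}=\sum_\kinn\sigma_{I_k}$, whose proximity operator is the soft-thresholder by Proposition~\ref{P:simple facts about the problem}\ref{P:simple facts about the problem:prox computation}; hence the forward-backward iteration applied to $f$ is a thresholding gradient method in the sense of Section~\ref{S:threshold gradient method}. Consequently Corollary~\ref{T:strong convergence for soft threshold gradient}\ref{T:strong convergence for soft threshold gradient:strong convergence} applies verbatim, yielding the strong convergence of $(x^n)_\nin$ to some $\bar x\in\argmin f$, which is the first assertion.

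Next I would invoke Proposition~\ref{P:LASSO is FUG}: under the standing hypothesis that every interval $I_k\in\I$ is bounded, $f$ satisfies the finite uniform conditioning property of order $2$. This is precisely the hypothesis of Theorem~\ref{T:CV thresholding gradient method}\ref{T:CV thresholding gradient method i}. Applying that theorem with $p=2$ produces constants $\eps\in\left]0,1\right[$ and $C\in\left]0,+\infty\right[$, a priori depending on $(\lambda,f,x^0)$, such that $f(x^n)-\inf f\leq\eps^n(f(x^0)-\inf f)$ and $\Vert x^{n+1}-\bar x\Vert\leq C\sqrt{\eps}^{\,n}$ for all $n\geq 1$. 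The last step is merely to trace the dependence of these constants: in the present case the data defining $f$ are the Lipschitz constant $L$ of $\nabla h$ (governed by the norm of $A$), the collection $\I$, and the pair $(A,y)$, so the generic dependence on $f$ rewrites as dependence on $(L,\I,A,y)$, giving the stated $(\lambda,L,x^0,\I,A,y)$.

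There is essentially no obstacle here, since both ingredients are already in hand and the theorem is a combination result; the only point requiring a little care is the bookkeeping of the constants, i.e.\ rephrasing the abstract dependence on $f$ in Theorem~\ref{T:CV thresholding gradient method} in terms of the explicit data of the $\ell^1$ regularized least squares problem.
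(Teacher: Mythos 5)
Your proposal is correct and is exactly the paper's argument: the paper proves this theorem precisely by combining Proposition~\ref{P:LASSO is FUG} (finite uniform $2$-conditioning of $f$) with Theorem~\ref{T:CV thresholding gradient method}\ref{T:CV thresholding gradient method i}, with strong convergence coming from Corollary~\ref{T:strong convergence for soft threshold gradient}. Your additional bookkeeping of how the dependence on $f$ translates into dependence on $(L,\I,A,y)$ is a sensible, if routine, elaboration.
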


\begin{remark}[On the linear rates]
The convergence rate for the iterative soft-thresholding has been a subject of interest since years, and have been obtained
only under additional assumptions on $A$ \cite{BreLor08b}. Theorem \ref{T:CV linear for ISTA} closes the question about the linear rates, by proving that they always hold. 
However, there are still several open problems, related to the  estimation of the constant appearing in these linear rates.
This  is related to the estimation of the constant $\gamma_{f,\Omega}$ in Definition \ref{D:conditioning}, when $\Omega=S_f(r) \cap X_J$ for some finite $J\subset \mathcal{N}$.
Up to now, the only available result  is based on Hoffman's lemma, which doesn't allow for explicit lower bounds on $\gamma_{f,\Omega}$ \cite{NecNesGli15,BolNguPeySut16}.
Having a tight lower bound for $\gamma_{f,\Omega}$, depending on $A$ restricted to $X_J$, would be of interest to go in this direction.
\end{remark}

\subsection{$\ell^1+\ell^p$ regularized least squares}\label{SS:L1Lp regularized LS}

We are now interested in   $\ell^1+\ell^p$-regularizers, i.e. when 
\begin{equation*}
g(x)= \Vert x \Vert_{1,\I} + \frac{1}{p}\Vert x \Vert^p, \text{ with } \Vert x \Vert^p=\sum\limits_{\kin}^{} \vert x_k \vert^p, \quad p> 1.
\end{equation*}
The case $p=2$ is also known as \textit{{elastic net regularization}} and has been proposed in \cite{ZouHas05}. The elasitc-net
penalty has been studied by the statistical machine learning community as an alternative to the $\ell^1$ regularization in variable selection problems where there 
are highly correlated features and all the relavant ones have to be identified \cite{DemDevRos09}. See also \cite{ComSalVil15} for the case $p<2$.
Note that the proximal operator of such $g$ can be computed explicitly when $p\in \{4/3,3/2,2,3,4\}$ (see \cite{ComPes07}).

\begin{proposition}[Geometry of ($\ell^1+\ell^p$) regularized least squares]\label{P:p-LASSO is FUG}
Let $p\in ]1,+\infty[$, let $(Y,\|\cdot\|_Y)$ be  a separable Hilbert space, let $y\in Y$ and let $A\colon X\to Y$ be a bounded linear operator.
In assumption \eqref{H:main structural assumption f=g+h} suppose that for every $\kinn$, $I_k \in \I$ is bounded.
Then $f\colon X\to \mathbb{R}\colon x\mapsto\Vert x \Vert_{1,\I} + \frac{1}{p}\Vert x \Vert_p^p + \frac{1}{2}\Vert Ax-y\Vert_Y^2$ has a finite uniform conditioning of order $\max\{2,p\}$.
\end{proposition}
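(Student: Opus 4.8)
The plan is to mirror the structure of the proof of Proposition \ref{P:LASSO is FUG}, reducing the infinite-dimensional conditioning statement to a finite-dimensional one via Lemma \ref{L:finite to infinite conditioning}, and then to invoke Li's theorem on convex piecewise polynomials \cite[Corollary 3.6]{Li13}. The only genuinely new ingredient compared to Proposition \ref{P:LASSO is FUG} is the presence of the extra regularizer $\frac{1}{p}\Vert \cdot \Vert_p^p$, whose restriction to a finite support is no longer a polynomial unless $p$ is an even integer. The whole difficulty is therefore to understand the geometry of $f_J$ when $p$ is arbitrary, and to explain why the conditioning order becomes $\max\{2,p\}$ rather than $2$.

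First I would fix a finite $J=\{k_1,\ldots,k_m\}\subset\mathcal{N}$ with $\argmin f\cap X_J\neq\varnothing$ and introduce, exactly as in Lemma \ref{L:finite to infinite conditioning}, the restriction $f_J=f\circ\Xi\in\Gamma_0(\R^m)$. Writing $h_J(u)=\frac12\Vert A\Xi u-y\Vert_Y^2$ and using the same factorization $A_J=A\Xi$, $S_J=(A_J^*A_J)^{1/2}$ as in Proposition \ref{P:LASSO is FUG}, the quadratic data-fidelity part contributes a piecewise polynomial of degree two, and the terms $\sum_i\sigma_{I_{k_i}}(u_i)$ are finite-valued piecewise linear because the intervals are bounded. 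The new term becomes $\frac1p\sum_{i=1}^m\vert u_i\vert^p$. So I would write
\begin{equation*}
f_J(u)=\sum_{i=1}^m\sigma_{I_{k_i}}(u_i)+\frac1p\sum_{i=1}^m\vert u_i\vert^p+\frac12\Vert S_J u-y_J\Vert_{\R^m}^2+\text{const},
\end{equation*}
and the question reduces to conditioning of this explicit finite-dimensional convex function.

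The main obstacle is that $\vert\cdot\vert^p$ is not piecewise polynomial for non-integer $p$, so Li's theorem does not apply directly to $f_J$ as a whole. I expect the proof to split into two regimes. For $1<p\leq 2$, the term $\frac1p\Vert u\Vert_p^p$ is dominated near the minimizer by the quadratic and piecewise-linear parts, and one should be able to reduce to the case already handled in Proposition \ref{P:LASSO is FUG}, obtaining $2$-conditioning; here $\max\{2,p\}=2$. For $p>2$, the term $\frac1p\Vert u\Vert_p^p$ is itself $p$-conditioned on bounded sets (being separable, strictly convex, with a flat-enough growth of order $p$), and combined with the convex lower-semicontinuous remainder it should yield $p$-conditioning of $f_J$ on $S_{f_J}(r)$. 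The cleanest route is probably to treat $\vert u_i\vert^p$ as a convex function whose growth around any point of $\argmin f_J$ is controlled from below by a multiple of $\dist(\cdot,\argmin f_J)^{\max\{2,p\}}$, using convexity and the fact that the combined function is coercive and has a nonempty minimizer set.

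Once the finite-dimensional conditioning of order $\max\{2,p\}$ is established on every sublevel set $S_{f_J}(r)$, and hence a fortiori on $\B_{\R^m}(\Xi^{-1}(\bar x),\delta)\cap S_{f_J}(r)$, I would conclude exactly as in Proposition \ref{P:LASSO is FUG} by applying Lemma \ref{L:finite to infinite conditioning}, which lifts the finite-dimensional conditioning of $f_J$ on $X_J$ to conditioning of $f$ itself, for the arbitrary finite $J$. Since $J$ was arbitrary and the constants may depend on $J$, this delivers the finite uniform conditioning property of order $\max\{2,p\}$, as claimed.
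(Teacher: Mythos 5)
Your scaffolding is correct and matches the paper: the reduction of the statement to a finite-dimensional one via Lemma \ref{L:finite to infinite conditioning}, and the observation that Li's theorem \cite[Corollary 3.6]{Li13} cannot be applied to $f_J$ when $p$ is not an even integer, are both exactly the paper's starting point. But the core of the proof --- showing that $f_J$ is $\max\{2,p\}$-conditioned --- is asserted rather than proved, and the principles you invoke to fill it are false in general. Conditioning is \emph{not} stable under adding a nonnegative convex term: both $\argmin$ and $\inf$ of the sum move, and, for instance, $\phi(x_1,x_2)=(x_1+x_2)^2$ is $2$-conditioned on $\R^2$ while $\phi(x_1,x_2)+x_1^4$ grows only like $t^4$ along the path $(t,-t)$ through its unique minimizer, hence is not $2$-conditioned near it. Consequently, for $1<p\leq 2$ there is no ``reduction to Proposition \ref{P:LASSO is FUG}'': that proposition concerns a different function with a different minimizing set, and its conclusion says nothing about $f_J$. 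For $p>2$, the step ``$\frac1p\Vert\cdot\Vert_p^p$ is $p$-conditioned, and combined with the convex remainder it should yield $p$-conditioning of $f_J$'' is precisely the unproved (and, as the example shows, generally invalid) implication. Finally, the raw increment of $\vert u_i\vert^p$ around a point $\bar u\in\argmin f_J$ with $\bar u_i\neq 0$ is not bounded below by a multiple of $\dist(\cdot,\argmin f_J)^{\max\{2,p\}}$; it is even negative when $u_i$ lies between $0$ and $\bar u_i$, so the last sentence of your middle paragraph is false as stated.

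What is missing is a mechanism that re-centers each summand by a linear tilt at a common minimizer before invoking its growth. The paper does this through the sum rule for conditioning (Theorem \ref{T:sum rule}): it writes $f_J=g_J+h_J$ with $g_J$ containing both the support functions and the $\ell^p$ term, verifies the qualification condition \eqref{e:qcond} (using that $g_J^*=\sum_{i}\frac1q\dist(\cdot,I_{k_i})^q$ with $q=p/(p-1)$ is differentiable, together with first-order optimality of $\bar u$), shows that the tilted quadratic $h_J-\langle\ell,\cdot\rangle$ is $2$-conditioned, and shows that the tilted $\bar g_J=g_J-\langle\ell,\cdot\rangle$ is $\max\{2,p\}$-conditioned by applying \cite[Proposition A.9]{ComSalVil15} to the Bregman-type increment $\frac1p\vert u_i\vert^p-\frac1p\vert v_i\vert^p-(u_i-v_i)\sgn(v_i)\vert v_i\vert^{p-1}$ at the minimizer $v$ of $\bar g_J$ --- not to $\vert u_i\vert^p$ itself --- followed by a computation involving the $\sigma_{I_{k_i}}$. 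A shorter correct route, closer in spirit to what you attempted, exploits that $\argmin f_J=\{\bar u\}$ is a singleton by strict convexity: from $0\in\partial f_J(\bar u)$ and the exactness of the subdifferential sum rule (all three terms are finite-valued on $\R^m$), write $0=s+\nabla\psi_J(\bar u)+\nabla h_J(\bar u)$ with $s\in\partial\bigl(\sum_i\sigma_{I_{k_i}}\bigr)(\bar u)$ and $\psi_J=\frac1p\sum_i\vert\cdot_i\vert^p$; then $f_J(u)-f_J(\bar u)$ splits into three Bregman increments, each nonnegative by convexity, and the one attached to $\psi_J$ is bounded below by $\frac{\gamma}{p'}\sum_i\vert u_i-\bar u_i\vert^{p'}$ on balls by \cite[Proposition A.9]{ComSalVil15}, which after norm equivalence gives the desired conditioning. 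Either way, this tilting/Bregman step is the heart of the proof, and it is absent from your proposal.
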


\begin{proof}
Let $J \subset \mathcal{N}$, $m=\vert J\vert$ and $p'=\max\{p,2\}$.
We define, by using the same notation as in Lemma \ref{L:finite to infinite conditioning},
\begin{equation*}\label{pli1b}
g_J(u)=\sum\limits_{i=1}^{m} \sigma_{I_{k_i}}(u_i) + \frac{1}{p}\vert u_i\vert^p \text{ and } h_J(u) = \frac{1}{2}\Vert A\Xi u - y \Vert^2_Y.
\end{equation*}
We are going to prove that $f_J=g_J+h_J$ is $p'$-conditioned on $\B_{\R^m}(\bar u,\delta)$, for any $\delta >0$.
To do so, we will apply to $f_J$ the sum rule in Theorem~\ref{T:sum rule}, which requires two hypotheses.
We must verify that the functions $g_J$ and $h_J$ are  conditioned up to linear perturbations (see equation \eqref{e:tilt}), and that the  qualification condition in \eqref{e:qcond} holds,
namely (since $\R^m$ is finite dimensional the strong relative interior coincides with the relative interior):
\begin{equation}\label{pli4}
0 \in \ri \left( \partial g_J^*(-\nabla h_J(\bar u)) - \partial h_J^*( \nabla h_J(\bar u)) \right).
\end{equation}
According to \eqref{pli2}, $\partial h_J^*( \nabla h_J(\bar u)) = \bar u + \ker S_J$.
Also, according to \cite[Proposition 13.30 \& Example 13.27(iii)]{BauComV2}, we have, for every $v\in\R^m$, $g_J^*(v) = \sum_{i=1}^m \frac{1}{q} \dist(v_i, I_{k_i})^q$, with $q=p/(p-1)$.
Since $t \mapsto \vert t \vert^q$ is continuously differentiable on $\R$, \cite[Example 17.33 and Proposition 17.31(ii)]{BauComV2}) imply that $g_J^*$ is G\^ateaux differetiable on $\R^m$.
This, together with the fact that $\ri \ker S_J = \ker S_J$, means that \eqref{pli4} is equivalent to
\begin{equation}\label{pli5}
0 \in \ri \left( \nabla g_J^*(-\nabla h_J(\bar u)) - \bar u + \ker S_J \right) = \nabla g_J^*(-\nabla h_J(\bar u)) - \bar u + \ker S_J.
\end{equation}
The latter inclusion holds true, since $\bar u \in \argmin f_J$ is equivalent to $\bar u \in \nabla g_J^*(-\nabla h_J(\bar u))$.
Thus, it only remains to prove that $\bar h_J = h_J - \langle \ell, \cdot \rangle $ and $\bar g_J = g_J - \langle \ell, \cdot \rangle$ are respectively $2$  and 
$p'$-conditioned on $\B_{\R^m}(\bar u,\delta)$, for $\ell$ being respectively in $R(\nabla h_J)$ and $R(\partial g_J)$.

Let us start with $\bar h_J$.
According to \eqref{pli2}, $\bar h_J$ is a  positive quadratic function being bounded from below, so it is $2$-conditioned on $\R^m$, with $\gamma_{\bar h_J,\R^m}$ 
being the smallest nonzero eigenvalue of $S_J$. Next, $\ell\in  R(\partial g_J)$ implies that there exists ${v}\in X$ such that $\ell\in\partial \bar{g}_J({v})$. Then, 
$0\in \partial \bar{g}_J((v))$, and this implies that ${v}$ is a minimizer of $\bar{g}_J$. It is also unique since $g_j$ is strictly convex.
If $v \notin \B_{\R^m}(\bar u,\delta)$, then $\bar g_J$ is automatically $p'$-conditioned on $\B_{\R^m}(\bar u,\delta)$, see for instance \cite[Proposition 3.3]{GarRosVil17}.
Assume then that $v \in \B_{\R^m}(\bar u,\delta)$, and use \cite[Proposition A.9]{ComSalVil15} to obtain the existence of $\gamma \in\left]0,+\infty\right[$ such that
\begin{equation*}
(\forall u \in \B_{\R^m}(\bar u,\delta))(\forall i\in\{1,...,m\}) \quad 
\frac{\gamma}{p'}\vert u_i - v_i \vert^{p'} \leq \frac{1}{p} \vert u_i \vert^p - \frac{1}{p} \vert v_i \vert^p - (u_i - v_i)\sgn (v_i) \vert v_i \vert^{p-1}.
\end{equation*}
Summing the above inequality over $i$, and using the fact that $\Vert \cdot \Vert_2^{p'} \leq \max\{1,m^{(p-2)/2}\} \Vert \cdot \Vert_{p'}^{p'}$, we derive by taking 
$\gamma'= \gamma \max\{1,m^{(p-2)/2}\}^{-1}$
that for all $u \in \B_{\R^m}(\bar u,\delta)$:
\begin{equation}\label{pli6}
\frac{\gamma'}{p'}\dist(u,\argmin \bar g_J)^{p'} 
\leq \sum\limits_{i=1}^{m} \frac{1}{p} \vert u_i \vert^p - \frac{1}{p} \vert v_i \vert^p - (u_i - v_i)\sgn (v_i) \vert v_i \vert^{p-1}.
\end{equation}
Introduce the following constant: $\omega_i=\sup I_{k_i}$ if $\ell_i > \sup I_{k_i}$, $\omega_i =\vert \ell_i \vert$ if $\ell_i \in I_{k_i}$, and 
$w_i=-\inf I_{k_i}$ if $\ell_i < \inf I_{k_i}$.
By making use of the first order condition at $v = \argmin \bar g_J$, it can be verified that 
\begin{equation*}\label{pli8}
(\forall _i \in \{1,...,m\}) \quad \vert v_i \vert^{p-1} = \vert \ell_i \vert - \omega_i, \,
 \sgn(v_i) = \sgn(\ell_i) 
  \text{ and }
 \sigma_{I_{k_i}}(v_i) = \omega_i \vert v_i \vert.
\end{equation*}
So we can deduce that
\begin{eqnarray*}
&  &
\frac{1}{p} \vert u_i \vert^p - \frac{1}{p} \vert v_i \vert^p 
- (u_i - v_i)\sgn (v_i) \vert v_i \vert^{p-1} \\
& = & 
\frac{1}{p} \vert u_i \vert^p - \frac{1}{p} \vert v_i \vert^p
-(u_i-v_i)(\ell_i - \sgn(v_i)\omega_i) \\
&=& \frac{1}{p} \vert u_i \vert^p - \frac{1}{p} \vert v_i \vert^p
-u_i \ell_i + v_i \ell_i - \sigma_{I_{k_i}}(v_i) + u_i \sgn(\ell_i) \omega_i.
\end{eqnarray*}
This, combined with \eqref{pli6}, leads to
\begin{equation*}
\frac{\gamma'}{p'}\dist(u,\argmin \bar g_J)^{p'} \leq 
\bar g_J(u) - \inf \bar g_J + \sum\limits_{i=1}^{m} -\sigma_{I_{k_i}}(u_i) +  u_i \sgn(\ell_i) \omega_i \leq \bar g_J(u) - \inf \bar g_J,
\end{equation*}
where the last inequality comes from the fact that $\sgn(\ell_i) \omega_i \in I_{k_i}$.
So we proved that $\bar g_J$ is $p'$-conditioned on $\B_{\R^m}(\bar u,\delta)$.
Theorem~\ref{T:sum rule} then yields that $f_J$ is $p'$-conditioned on $\B_{\R^m}(\bar u,\delta)$.
We conclude the proof applying Lemma \ref{L:finite to infinite conditioning}. 
\end{proof}

Combining Theorem \ref{T:CV thresholding gradient method} and Proposition \ref{P:p-LASSO is FUG}, be obtain rates for the corresponding thresholding gradient method.

\begin{theorem}\label{T:CV linear for pISTA}
Under the assumptions of Proposition~\ref{P:p-LASSO is FUG},  let $(x^n)_\nin$ be  the sequence generated by the forward-backward algorithm applied 
to $ f$.
Then $(x^n)_\nin$ converges strongly to some $\bar x \in \argmin f$.
If $p\in ]1,2[$, there exists two constants $\eps \in \left]0,1\right[$ and $C \in\left]0,+\infty\right[$, depending on $(\lambda,L,x^0,\I,A,y,p)$, such that
\begin{equation*}
(\forall n \geq 1) \quad f(x^n) - \inf f \leq \eps^n (f(x^0)-\inf f) \quad \text{ and } \quad  \Vert x^{n+1} - \bar x \Vert \leq C \sqrt{\eps}^n.
\end{equation*}
If $p \in ]2,+\infty[$, there exists two constants $(C_1,C_2)\in ]0,+\infty[^2$, depending on $(\lambda,L,x^0,\I,A,y,p)$, such that
\begin{equation*}
(\forall n \geq 1) \quad f(x^n) - \inf f \leq C_1{n^{-\frac{p}{p-2}}} \quad \text{ and }\quad  \Vert x_{n+1} - x_\infty \Vert \leq  C_2{n^{-\frac{1}{p-2}}}.
\end{equation*}
\end{theorem}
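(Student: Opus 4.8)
The plan is to read off Theorem \ref{T:CV linear for pISTA} directly from the two results just proved: the finite uniform conditioning of $f$ furnished by Proposition \ref{P:p-LASSO is FUG}, and the abstract rate dichotomy of Theorem \ref{T:CV thresholding gradient method}. No new geometry is required, since all the substantive work (the piecewise-polynomial analysis for $\ell^1$ and the elastic-net perturbation estimate behind the conditioning constant) has already been carried out in Proposition \ref{P:p-LASSO is FUG}; the present statement is simply its instantiation along the trajectory of the algorithm.

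First I would settle the strong convergence claim. The forward-backward iteration applied to $f$ is a thresholding gradient algorithm, since under the standing assumption \eqref{H:main structural assumption f=g+h} (here with $\psi_k=\frac{1}{p}|\cdot|^p$) the proximal step decouples coordinatewise by Proposition \ref{P:simple facts about the problem}\ref{P:simple facts about the problem:prox computation}. Hence Corollary \ref{T:strong convergence for soft threshold gradient}\ref{T:strong convergence for soft threshold gradient:finite dimensionality} yields a finite $J\subset\mathcal{N}$ with $(x^n)_{\nin}\subset X_J$; being bounded (by weak convergence) and confined to the finite-dimensional subspace $X_J$, the sequence converges strongly to some $\bar x\in\argmin f$ by Corollary \ref{T:strong convergence for soft threshold gradient}\ref{T:strong convergence for soft threshold gradient:strong convergence}, and this $\bar x$ is also its weak limit.

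It then remains to invoke Theorem \ref{T:CV thresholding gradient method} with the conditioning order supplied by Proposition \ref{P:p-LASSO is FUG}, namely $\max\{2,p\}$, and to split into cases. If $p\in\left]1,2\right[$ then $\max\{2,p\}=2$, so $f$ is finitely uniformly $2$-conditioned and Theorem \ref{T:CV thresholding gradient method}\ref{T:CV thresholding gradient method i} gives the announced linear rates for both $f(x^n)-\inf f$ and $\Vert x^{n+1}-\bar x\Vert$. If $p\in\left]2,+\infty\right[$ then $\max\{2,p\}=p>2$, so $f$ is finitely uniformly $p$-conditioned and Theorem \ref{T:CV thresholding gradient method}\ref{T:CV thresholding gradient method ii} produces the sublinear rates with exponents $-\tfrac{p}{p-2}$ and $-\tfrac{1}{p-2}$.

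Finally, to match the claimed dependence of the constants, I would note that Theorem \ref{T:CV thresholding gradient method} delivers $\eps,C,C_1,C_2$ depending on $(\lambda,f,x^0)$, and that here $f$ is entirely determined by $(\I,A,y,p)$ together with the Lipschitz constant $L=\Vert A\Vert^2$ of $\nabla h$; tracing this dependence gives the stated tuple $(\lambda,L,x^0,\I,A,y,p)$. Since the argument is a pure assembly of prior results, there is no genuine obstacle: the only point worth checking is that the weak limit produced by Theorem \ref{T:CV thresholding gradient method} coincides with the strong limit from Corollary \ref{T:strong convergence for soft threshold gradient}, which is immediate because strong convergence forces weak convergence to the same point.
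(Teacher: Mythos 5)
Your proof is correct and follows exactly the paper's route: the paper's own proof is precisely the one-line combination of Theorem~\ref{T:CV thresholding gradient method} and Proposition~\ref{P:p-LASSO is FUG} (with strong convergence coming from Corollary~\ref{T:strong convergence for soft threshold gradient}, as in your first step), splitting on whether $\max\{2,p\}$ equals $2$ or exceeds it. Your additional remarks on the constants' dependence and on the coincidence of weak and strong limits are harmless elaborations of what the paper leaves implicit.
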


\section{Conclusion and perspectives}

In this paper we study and highlight the importance of the notion of extended support for minimization problems  with sparsity  inducing separable penalties. An identification result, together  with  uniform conditioning on finite dimensional sets, allow us to generalize and revisit 
classic convergence results for thresholding gradient methods, from a  novel and  different perspective, while further  providing new convergence rates. 

An interesting direction for future research would be to  go beyond separable penalties, in particular extending our results to regularizers promoting structured sparsity \cite{MosRosSan10}, such as group lasso.
A reasonable approach would be to extend the primal-dual arguments in \cite{FadMalPey17} to the infinite-dimensional setting.
A more challenging research direction seems the extension of our results   to gridless problems \cite{DuvPey17}.
Indeed, our analysis relies on the fact that the variables (signals) we consider are supported on a grid (indexed by $\NN \subset \N$), which allows  to use finite-dimensional arguments. 
Such an extension would require to work on Banach spaces of functions or of measures, and  seems an interesting venue for future research.

\section*{Acknowledgements}

\noindent This material is supported by the Center for Brains, Minds and Machines, funded by NSF STC award CCF-1231216, and the Air Force project FA9550-17-1-0390. 
G. Garrigos is supported by the European Research Council (ERC project NORIA), and part of his work was done while being a postdoc at the LCSL-IIT@MIT.
L. Rosasco acknowledges the financial support of the Italian Ministry of Education, University and Research FIRB project RBFR12M3AC. 
S. Villa is supported by the INDAM GNAMPA research project 2017 Algoritmi di ottimizzazione ed equazioni di evoluzione ereditarie.
L. Rosasco and S. Villa acknowledge the financial support from the EU project 777826 - NoMADS.

\appendix

\section{Annex}

\subsection{Closure, interior, boundary}

\begin{proposition}\label{P:derivative of support function}
Let $C\subset X$ be a closed convex set. Then:
\begin{enumerate}[(i)]
	\item\label{P:derivative of support function:0 gives full set} $\partial \sigma_C(0) = C$.
	\item\label{P:derivative of support function:exposed faces in boundary} For all $d \in X \setminus \{0\}$, \, $\partial \sigma_C(d) \subset \bd C$.
\end{enumerate}
Assume moreover that $\inte C \neq \emptyset$.
\begin{enumerate}[resume*]
	\item\label{P:relative interiors:nonempty interior case}  $\inte C =\qri C$.
\end{enumerate}
\end{proposition}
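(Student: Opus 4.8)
The final statement to prove is Proposition \ref{P:derivative of support function}, which collects three facts about the support function $\sigma_C$ of a closed convex set $C \subset X$. I will address the three items in order, as each is largely self-contained.

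\textbf{Item \ref{P:derivative of support function:0 gives full set}.} The plan is to unfold the definition of the subdifferential directly. By definition, $d \in \partial \sigma_C(0)$ means $\sigma_C(u) \geq \sigma_C(0) + \langle d, u \rangle$ for all $u \in X$. Since $\sigma_C(0) = \sup_{x \in C} \langle x, 0 \rangle = 0$ (as $C$ is nonempty), this reads $\sigma_C(u) \geq \langle d, u \rangle$ for all $u$, i.e. $\langle d, u \rangle \leq \sigma_C(u)$ for every $u$. This is exactly the statement that $d$ lies in the set whose support function is $\sigma_C$; since $C$ is closed and convex, this set is $C$ itself (by the bipolar/Fenchel--Moreau correspondence between closed convex sets and their support functions). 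Hence $\partial \sigma_C(0) = C$. One can alternatively invoke $\partial \sigma_C = (\partial \delta_C)^{-1}$ together with $\sigma_C = \delta_C^*$ and read off the claim from the standard inverse-subdifferential identity, but the direct computation is cleaner.

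\textbf{Item \ref{P:derivative of support function:exposed faces in boundary}.} Fix $d \neq 0$ and suppose $x \in \partial \sigma_C(d)$. First I would show $x \in C$: the subgradient inequality $\sigma_C(u) \geq \sigma_C(d) + \langle x, u - d\rangle$ for all $u$, combined with positive homogeneity of $\sigma_C$ (take $u = t d$ and let $t \to \infty$, and separately $u$ arbitrary), forces $\langle x, u \rangle \leq \sigma_C(u)$ for all $u$, so $x \in C$ as above; moreover $\langle x, d \rangle = \sigma_C(d)$, i.e. $x$ is a point of $C$ where the linear functional $\langle \cdot, d\rangle$ attains its maximum over $C$. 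The key step is then to argue $x \notin \inte C$: if $x$ were interior, the ball $\B_X(x,\delta) \subset C$ for some $\delta > 0$, and then the point $x + \delta d / \|d\|$ would lie in $C$ with $\langle x + \delta d/\|d\|, d\rangle = \sigma_C(d) + \delta \|d\| > \sigma_C(d)$, contradicting that $x$ maximizes $\langle \cdot, d\rangle$. Therefore $x \in C \setminus \inte C = \bd C$.

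\textbf{Item \ref{P:relative interiors:nonempty interior case}.} Under the extra hypothesis $\inte C \neq \emptyset$, I must show $\inte C = \qri C$. The inclusion $\inte C \subset \qri C$ holds in general (the interior is contained in the quasi-relative interior). For the reverse, the plan is to use the characterization that when $\inte C \neq \emptyset$ the affine hull of $C$ is the whole space $X$, so the relative interior coincides with the interior, and the quasi-relative interior of a convex set with nonempty interior reduces to the interior as well; I would cite the relevant facts from \cite[Section 6.2]{BauComV2} rather than rebuild the theory. The one point to be careful about is the infinite-dimensional subtlety distinguishing $\ri$ and $\qri$: the hypothesis $\inte C \neq \emptyset$ is precisely what collapses the distinction, since a nonempty interior makes $C$ "full-dimensional" and the cone of normals trivial off the boundary.

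The main obstacle is item \ref{P:relative interiors:nonempty interior case}, where the interplay between the (topological) interior, relative interior, and quasi-relative interior genuinely requires the infinite-dimensional theory; items \ref{P:derivative of support function:0 gives full set} and \ref{P:derivative of support function:exposed faces in boundary} are routine consequences of the definition of the support function and its subdifferential.
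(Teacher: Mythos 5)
Your proposal is correct, but it is structured quite differently from the paper's proof, which contains no mathematical argument at all: the paper simply cites \cite{BauComV2} for each item (Example 16.34 for \ref{P:derivative of support function:0 gives full set}, Proposition 7.3 and Theorem 7.4 for \ref{P:derivative of support function:exposed faces in boundary}, and Fact 6.14 for \ref{P:relative interiors:nonempty interior case}). You instead reconstruct \ref{P:derivative of support function:0 gives full set} and \ref{P:derivative of support function:exposed faces in boundary} from first principles, and your arguments are sound: for \ref{P:derivative of support function:0 gives full set}, the subgradient inequality at $0$ plus the Fenchel--Moreau/separation characterization of a closed convex set by its support function gives exactly $\partial \sigma_C(0)=C$; for \ref{P:derivative of support function:exposed faces in boundary}, positive homogeneity along $u=td$ yields $\langle x,d\rangle = \sigma_C(d)$ and $x\in C$, and the perturbation $x+\delta d/\Vert d \Vert$ cleanly rules out $x\in\inte C$, so that closedness of $C$ gives $x \in C\setminus \inte C=\bd C$. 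You also implicitly handle the degenerate cases correctly ($\partial\sigma_C(d)=\emptyset$ when $\sigma_C(d)=+\infty$, and the claim reducing to $x \in C$ when $\inte C=\emptyset$). For \ref{P:relative interiors:nonempty interior case} you do exactly what the paper does, namely defer to \cite{BauComV2}, which is the right call since that item genuinely requires the infinite-dimensional theory of quasi-relative interiors. What the two approaches buy: yours makes the appendix self-contained and exposes that items \ref{P:derivative of support function:0 gives full set}--\ref{P:derivative of support function:exposed faces in boundary} need nothing beyond the separation theorem, while the paper's version is maximally compressed and leans on a standard reference, which is a reasonable choice for an annex collecting known facts.
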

\begin{proof} \ref{P:derivative of support function:0 gives full set}: see \cite[Example 16.34]{BauComV2}.

\ref{P:derivative of support function:exposed faces in boundary}: see \cite[Proposition 7.3 \& Theorem 7.4]{BauComV2}.

\ref{P:relative interiors:nonempty interior case}: see \cite[Fact 6.14]{BauComV2}.
\end{proof}

\begin{proposition}\label{P:interior of infinite products}
Let $\I=(I_k)_\kinn$ be a collection of closed  proper intervals of $\R$, and suppose that $[-\omega,\omega] \subset I_k$ for all $\kinn$.
For every $x\in X$, let
\begin{equation}
\label{e:rho}
\rho(x)= \inf\limits_{x_k \in \inte I_k} \dist(x_k,\bd I_k).
\end{equation}
Then the following hold
\begin{enumerate}[(i)]
\item
\label{P:interior of infinite productsi}
For every $x \in X, \quad \rho(x) \in\left]0,+\infty\right[$;
\item 
\label{P:interior of infinite productsii}
$\inte \left(\bigoplus_{k\in\mathcal{N}} {I}_k\right)  = \bigoplus_{\kinn}\inte I_k$;
\end{enumerate}
\end{proposition}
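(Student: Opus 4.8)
The plan is to prove (i) first, since it supplies the uniform positive radius needed for the nontrivial inclusion in (ii).

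For (i), the key is that square-summability of $(x_k)_\kinn$ combined with the uniform inclusion $[-\omega,\omega]\subset I_k$ controls how each coordinate sits inside its interval. First I would observe that $x_k\in\left]-\omega,\omega\right[$ forces $x_k\in\inte I_k$, so $\{k : x_k\notin\inte I_k\}\subseteq\{k : |x_k|\geq\omega\}$, which is finite because $\sum_k|x_k|^2<+\infty$; in particular the index set over which the infimum is taken is cofinite, hence nonempty in the infinite-dimensional case, giving $\rho(x)<+\infty$. For the strict positivity I would use the elementary estimate $\dist(x_k,\bd I_k)\geq\omega-|x_k|$, valid whenever $x_k\in\left]-\omega,\omega\right[$, which holds because $\inf I_k\leq-\omega$ and $\sup I_k\geq\omega$. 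Since $x_k\to 0$, all but finitely many indices satisfy $|x_k|\leq\omega/2$ and hence $\dist(x_k,\bd I_k)\geq\omega/2$; the remaining finitely many indices $k$ with $x_k\in\inte I_k$ each contribute a strictly positive distance, so their minimum is positive. Taking the minimum of $\omega/2$ and this finite positive minimum bounds $\rho(x)$ away from $0$.

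For (ii), writing $C=\bigoplus_\kinn I_k$ and $D=\bigoplus_\kinn\inte I_k$, I would prove the two inclusions separately. The inclusion $\inte C\subseteq D$ is the easy direction: given $x\in\inte C$ with $\B_X(x,\delta)\subseteq C$, testing the coordinate perturbations $x\pm t\,e_k$ for $|t|\leq\delta$ shows $[x_k-\delta,x_k+\delta]\subseteq I_k$, hence $x_k\in\inte I_k$ for every $\kinn$. The reverse inclusion $D\subseteq\inte C$ is where (i) enters and is the crux: for $x\in D$ all coordinates lie in $\inte I_k$, so the infimum defining $\rho(x)$ runs over all of $\mathcal{N}$ and $\rho(x)>0$ by (i). Setting $\delta=\rho(x)/2$ and using $|y_k-x_k|\leq\|y-x\|\leq\delta<\dist(x_k,\bd I_k)$ for $y\in\B_X(x,\delta)$, each coordinate $y_k$ remains inside $I_k$, so $\B_X(x,\delta)\subseteq C$ and therefore $x\in\inte C$.

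The main obstacle is the strict positivity in (i): pointwise each $\dist(x_k,\bd I_k)$ is positive, but the infimum over infinitely many coordinates could a priori vanish. The resolution is exactly the interplay between the uniform interval bound $\omega$, which controls the tail, and square-summability, which leaves only finitely many exceptional indices. This uniform lower bound is precisely what allows a single radius to work simultaneously across all coordinates in (ii), the feature that distinguishes the infinite-dimensional statement from its trivial finite-dimensional analogue.
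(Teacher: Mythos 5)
Your proof is correct and takes essentially the same approach as the paper: both establish (i) by splitting the indices into a finite head, where each distance to the boundary is strictly positive, and a cofinite tail where $\vert x_k\vert \leq \omega/2$ forces $\dist(x_k,\bd I_k)\geq \omega/2$, and both prove (ii) via coordinate perturbations for the inclusion $\inte\bigl(\bigoplus_\kinn I_k\bigr)\subset\bigoplus_\kinn \inte I_k$ and via the uniform radius $\rho(x)>0$ supplied by (i) for the reverse inclusion. The only cosmetic difference is that you justify $\rho(x)<+\infty$ by nonemptiness of the index set, which implicitly uses properness of the intervals (so that $\bd I_k\neq\emptyset$ and each distance is finite), a point the paper spells out explicitly.
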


\begin{proof} \ref{P:interior of infinite productsi}
Let $x\in X$. If $\mathcal{N}$ is finite the statement follows immediately. If $\mathcal{N}$ is infinite, since $\vert x_k \vert$ tends to $0$ when 
$k \to + \infty$, there exists  $K \in \mathcal{N}$ such that for all $k \geq K$, $\vert u_k \vert \leq \omega /2$.
Now, consider the following subsets of $\mathcal{N}$
\begin{equation*}
J=\{ \kinn \ | \ u_k \in \inte I_k \}, \ J_F= J \cap \{0,\dots , K-1\}, \ J_\infty= J \setminus J_F,
\end{equation*}
which are defined in such a way that $ \rho(x)=\inf_{k \in J} \dist(u_k , \bd I_k)$ and $J = J_F \sqcup J_\infty$.
Observe that $\rho(x) \leq \dist(x_K,\bd I_K) < + \infty$ since $\bd I_K\neq \varnothing$, so we only need to show that $\rho(x) > 0$.
Since $J_F$ is finite and $x_k \in \inte I_k$ for all $k \in J_F$, we have $\dist(x_k,\bd I_k) >0$ for all $k \in J_F$.
So we deduce that $\inf_{k \in J_F} \dist(x_k , \bd I_k) >0$.
On the other hand, for any $k \in J_\infty$, we have $\vert u_k \vert \leq \omega/2$, while $[-\omega,\omega] \subset I_k$, therefore $\dist(x_k,\bd I_k) \geq \omega/2$, 
and $\rho(x)=\inf_{k \in J} \dist(x_k , \bd I_k)  >0$ .

\ref{P:interior of infinite productsii}: let $x \in \inte \bigoplus_{k\in\mathcal{N}} {I}_k$. We are going to show that $x_k \in \inte I_k$ for all $k\in\mathcal{N}$.
By assumption, there exists $\delta\in\left]0,+\infty\right[$ such that $\B_X(x,\delta ) \subset \bigoplus_{k\in\mathcal{N}} {I}_k $.
Let $k \in \mathcal{N}$, and let us show that $[x_k-\delta,x_k+\delta] \subset I_k$.
Let  $y_k \in [x_k-\delta,x_k+\delta]$, and define $\bar x \in X$ such that $\bar x_k = y_k$ and $\bar{x}_i=x_i$ for every $i\neq k$.
Then we derive $\Vert x - \bar x \Vert = |x_k - y_k | = \delta$, whence $\bar x \in \B(x,\delta)\subset \bigoplus_{k\in\mathcal{N} }{I}_k $.
This implies that $y_k \in I_k$, which proves that $x_k \in \inte I_k$.
Now, we let $x \in \bigoplus_\kinn \inte I_k $, and we show that $x \in \inte\left( \bigoplus_{k\in\mathcal{N}} {I}_k\right)$.
By \ref{P:interior of infinite productsi}, $\rho(x)>0$ and, for every $\kinn$, $x_k\in\inte I_k$ by assumption. Let $\eta\in\left]0,\rho\right[$. Since
$\dist(x_k,\bd I_k)\geq \rho(x)$, we derive $[x_k-\eta,x_k+\eta] \subset I_k$.
On the other hand, the non-expansiveness of the projection implies that $\Vert x_k - p_k \Vert \leq \Vert x_k - y_k \Vert \leq \eta< \rho$, which leads to a contradiction.
Therefore $\B_X(x,\eta) \subset  \bigoplus_{\kinn} [x_k-\eta,x_k+\eta] \subset \bigoplus_{\kinn} {I}_k$. This yields $x \in \inte  \bigoplus_{\kinn} {I}_k$.
\end{proof}

\begin{proposition}[Quasi relative interior of infinite products]\label{P:qrinterior of infinite products}
Let $\I=(I_k)_\kin$ be a collection of closed  intervals of $\R$. Let $J\subset \mathcal{N}$ be a finite set, and suppose that $[-\omega,\omega] \subset I_k$ for all $\kinn\setminus J$.
Then
\begin{equation*}
\qri \bigoplus_{k\in\mathcal{N}} {I}_k  = \bigoplus_{\kinn} \ri I_k.
\end{equation*}
\end{proposition}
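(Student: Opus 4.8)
The plan is to exploit the finiteness of $J$ to separate a finite-dimensional part, on which quasi relative interior reduces to relative interior, from a cofinite part, on which Proposition~\ref{P:interior of infinite products} already computes a nonempty topological interior. Write $C=\bigoplus_{\kinn}I_k$, set $K=\mathcal{N}\setminus J$, and identify $X=X_J\oplus X_K$ with $X_J$ finite dimensional; then $C$ factors as the Cartesian product $C=C_J\times C_K$, where $C_J=\bigoplus_{k\in J}I_k\subset X_J$ and $C_K=\bigoplus_{k\in K}I_k\subset X_K$. Since $X_J$ is finite dimensional, quasi relative interior and relative interior coincide there, and the relative interior of a finite box is the box of relative interiors, so $\qri C_J=\ri C_J=\bigoplus_{k\in J}\ri I_k$.

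For the cofinite factor, every $I_k$ with $k\in K$ contains $[-\omega,\omega]$, so applying Proposition~\ref{P:interior of infinite products}\ref{P:interior of infinite productsii} to the subcollection $(I_k)_{k\in K}$ gives $\inte C_K=\bigoplus_{k\in K}\inte I_k$. This interior is nonempty (it contains $0$, since $0\in\inte I_k$ for every $k\in K$), so Proposition~\ref{P:derivative of support function}\ref{P:relative interiors:nonempty interior case} yields $\qri C_K=\inte C_K$; moreover each such $I_k$ is nondegenerate, whence $\inte I_k=\ri I_k$. Therefore $\qri C_K=\bigoplus_{k\in K}\ri I_k$.

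It remains to glue the two factors, that is, to prove the product rule $\qri(C_J\times C_K)=\qri C_J\times\qri C_K$; this is the main obstacle, because in infinite dimensions $\cl(\mathrm{cone}(\cdot))$ does not commute with Cartesian products, so the defining cone cannot be split directly. I would instead argue through normal cones. Recall that $x\in\qri C$ means that $\cl(\mathrm{cone}(C-x))$ is a linear subspace; since for a closed convex set $C$ and $x\in C$ one has $N_C(x)=\big(\cl(\mathrm{cone}(C-x))\big)^\circ$, and a closed convex cone is a subspace if and only if its polar is, it follows that $x\in\qri C$ if and only if $N_C(x)$ is a linear subspace. The normal cone of a product splits, $N_{C_J\times C_K}(x_J,x_K)=N_{C_J}(x_J)\times N_{C_K}(x_K)$, and a product of two cones is a subspace exactly when both factors are; this gives the product rule. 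Chaining the three identities produces $\qri C=\big(\bigoplus_{k\in J}\ri I_k\big)\times\big(\bigoplus_{k\in K}\ri I_k\big)=\bigoplus_{\kinn}\ri I_k$, as claimed.
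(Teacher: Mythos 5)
Your proof is correct, and its architecture is the same as the paper's: the same finite/cofinite splitting of $\mathcal{N}$, the same use of Proposition \ref{P:interior of infinite products}\ref{P:interior of infinite productsii} together with Proposition \ref{P:derivative of support function}\ref{P:relative interiors:nonempty interior case} to get $\qri C_K=\inte C_K=\bigoplus_{k\in K}\ri I_k$ on the cofinite factor, and the identification of $\qri$ with $\ri$ on the finite-dimensional factor. Where you genuinely differ is the gluing step: the paper simply cites the product rule $\qri(C_J\times C_K)=\qri C_J\times\qri C_K$ from \cite[Proposition 2.5]{BorLew92}, while you prove it via normal cones. That argument is sound: $N_C(x)$ is the polar of $\cl\mathrm{cone}\,(C-x)$, a closed convex cone is a linear subspace if and only if its polar is (bipolar theorem), normal cones split over Cartesian products, and a product of two cones, each containing $0$, is a subspace exactly when both factors are. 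What your route buys is self-containedness at the very point that is delicate in infinite dimensions (as you observe, $\cl\mathrm{cone}$ itself does not commute with products); what it costs is reliance on two other standard facts not proved in the paper, namely the normal-cone characterization of $\qri$ (itself a Borwein--Lewis result) and the coincidence $\qri=\ri$ in finite dimensions. In effect you trade the paper's single citation for a different citation-level fact plus a short polarity argument; both are valid, and yours has the merit of making the mechanism behind the product rule visible.
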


\begin{proof}
Assume $\mathcal{N}$ is infinite and set $J_\infty=\mathcal{N}\setminus J$.
We can then write
\begin{align*}
\qri  \bigoplus_{k\in\mathcal{N}} {I}_k& = &&\qri \bigg( (\bigoplus_{k \in J_\infty} {I}_k)\oplus(\bigoplus_{k \in J} {I}_k)\bigg)&& \text{ because $J$ is finite}, \\
	&= && \qri (\bigoplus_{k \in J_\infty} {I}_k)\oplus(\bigoplus_{k \in J} \qri I_k) && \text{ by  \cite[Proposition 2.5]{BorLew92}}, \\
	&= & &(\bigoplus_{k\in J_{\infty}} \inte I_k ) \oplus (\bigoplus_{k \in J} \qri I_k) && \text{ by  Proposition \ref{P:interior of infinite products}}, \\
	&=& &  \bigoplus_{\kinn} \ri I_k && \text{ by  Proposition \ref{P:derivative of support function}\ref{P:relative interiors:nonempty interior case}}.
\end{align*}
\end{proof}

\subsection{Functions}

\begin{lemma}\label{L:prox is injective at smooth minimizer}
Let $\psi \in \Gamma_0(X)$ be differentiable at $0 \in \argmin \psi$ and let $x\in X$.
Then
$$x = 0 \Leftrightarrow \prox_\psi(x) =0.$$
\end{lemma}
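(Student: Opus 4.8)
The plan is to reduce the whole equivalence to the single identity $\partial \psi(0) = \{0\}$, after which both implications follow simultaneously from the variational characterization of the proximity operator. Recall that for $\psi \in \Gamma_0(X)$ one has, for all $x,p \in X$, the equivalence $p = \prox_\psi(x) \Leftrightarrow x - p \in \partial \psi(p)$ (see e.g. \cite{BauComV2}). Specializing to $p = 0$, this reads $\prox_\psi(x) = 0 \Leftrightarrow x \in \partial\psi(0)$, so the entire statement hinges on computing the subdifferential of $\psi$ at the origin.

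First I would identify $\partial\psi(0)$, using the two available hypotheses. On the one hand, $0 \in \argmin \psi$, so Fermat's rule gives $0 \in \partial\psi(0)$. On the other hand, $\psi$ is convex and differentiable at $0$, which forces its subdifferential there to be the singleton $\partial\psi(0) = \{\nabla\psi(0)\}$. Combining these two facts yields $\nabla\psi(0) = 0$, and therefore $\partial\psi(0) = \{0\}$.

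With $\partial\psi(0) = \{0\}$ in hand, the equivalence displayed above becomes $\prox_\psi(x) = 0 \Leftrightarrow x \in \{0\} \Leftrightarrow x = 0$, which is precisely the claim; a single computation delivers both directions at once. I expect no genuine obstacle, as the argument is merely a bookkeeping of classical convex-analytic facts. The only point deserving a line of care is the passage from differentiability to $\partial\psi(0) = \{\nabla\psi(0)\}$ in the (possibly infinite-dimensional) Hilbert setting, where one must invoke the convexity of $\psi$ together with its differentiability at $0$ (see e.g. \cite{BauComV2}); this step is entirely standard.
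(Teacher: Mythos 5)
Your proposal is correct and follows essentially the same route as the paper: the paper's one-line proof also passes through the resolvent characterization $\prox_\psi(x)=0 \Leftrightarrow x \in \partial\psi(0)$ and then uses differentiability at the minimizer $0$ to identify $\partial\psi(0)=\{\nabla\psi(0)\}=\{0\}$. Your reorganization (computing $\partial\psi(0)=\{0\}$ first, then reading off both implications) is just a cosmetic reordering of the same chain of equivalences.
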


\begin{proof}
$\prox_\psi(x) = 0 \Leftrightarrow (Id + \partial \psi)^{-1}(x)=0 \Leftrightarrow x \in 0 + \partial \psi (0) \Leftrightarrow x = \nabla \psi(0) \Leftrightarrow x=0$.
\end{proof}

\begin{proposition}\label{P:countable sum of functions}
Let $g_k \in \Gamma_0(\R)$ with $\inf g_k=g_k(0)=0$ for all $\kinn$. 
Define $g\colon X\to \mathbb{R}\cup\{+\infty\}\colon x\mapsto\sum_\kinn g_k(x_k)$.  Then:
\begin{enumerate}[(i)]
	\item\label{P:countable sum of functions:convex} $g\in \Gamma_0(\X)$.
	\item\label{P:countable sum of functions:domain ssdiff} $\dom \partial g = \{ x \in \X \ | \ \bigoplus_{\kinn} \partial g_k (x_k) \neq \emptyset \}$.
	\item\label{P:countable sum of functions:ssdiff} For all $x \in \dom \partial g$, $\partial g(x) = \bigoplus_{\kinn} \partial g_k (x_k)$.
	\item\label{P:countable sum of functions:prox} For all $x \in \X$,  $\prox_g(x) =\sum_{\kinn} \prox_{g_k}(x_k) e_k$.
\end{enumerate}
\end{proposition}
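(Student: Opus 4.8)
The plan is to establish the four items in the order (i), then (iii) together with (ii), and finally (iv), exploiting throughout the normalization $0=g_k(0)=\inf g_k\leq g_k$, which guarantees that $g$ is a genuine (possibly infinite) sum of \emph{nonnegative} terms. For (i), the starting observation is that each map $x\mapsto g_k(x_k)=g_k(\langle x,e_k\rangle)$ is convex and lower semicontinuous, being the composition of the convex lsc function $g_k$ with the continuous linear functional $\langle\cdot,e_k\rangle$. Since all terms are nonnegative, I would write $g$ as the supremum of its finite partial sums, $g(x)=\sup_{F}\sum_{k\in F}g_k(x_k)$, where $F$ ranges over finite subsets of $\mathcal{N}$. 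Each partial sum is convex and lsc, hence so is $g$ as a supremum of such functions; and $g$ is proper because $g(0)=\sum_{k}g_k(0)=0$ while $g\geq 0$. Thus $g\in\Gamma_0(X)$.

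For (iii), and with it (ii), I would prove the two inclusions between $\partial g(x)$ and $\bigoplus_{k\in\mathcal{N}}\partial g_k(x_k)$. For $\bigoplus_{k}\partial g_k(x_k)\subseteq\partial g(x)$, take $u\in X$ with $u_k\in\partial g_k(x_k)$ for every $k$. Evaluating the scalar subgradient inequality $g_k(t)\geq g_k(x_k)+u_k(t-x_k)$ at $t=0$ and using $g_k(0)=0$ yields $0\leq g_k(x_k)\leq u_kx_k$; summing and applying Cauchy--Schwarz gives $g(x)\leq\langle u,x\rangle\leq\Vert u\Vert\,\Vert x\Vert<\infty$, so $x\in\dom g$. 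Then for any $y\in X$ with $g(y)<\infty$, summing the coordinatewise inequalities over finite subsets and passing to the limit (all the relevant series converge) gives $g(y)-g(x)\geq\langle u,y-x\rangle$, i.e. $u\in\partial g(x)$. For the reverse inclusion, given $u\in\partial g(x)$ (so $x\in\dom g$), I fix a coordinate $k_0$ and a scalar $t$ and test the subgradient inequality against the one-coordinate perturbation $y=x+(t-x_{k_0})e_{k_0}$; since $g(y)=g(x)-g_{k_0}(x_{k_0})+g_{k_0}(t)$, the inequality collapses to $g_{k_0}(t)\geq g_{k_0}(x_{k_0})+u_{k_0}(t-x_{k_0})$ for all $t$, whence $u_{k_0}\in\partial g_{k_0}(x_{k_0})$. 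These two inclusions show that $\partial g(x)$ and $\bigoplus_{k}\partial g_k(x_k)$ coincide whenever either is nonempty, which yields (iii) directly and, by reading off when these sets are nonempty, the domain characterization (ii).

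For (iv), I would argue directly by separability of the proximal objective. Setting $p_k=\prox_{g_k}(x_k)$, the normalization forces $\prox_{g_k}(0)=0$, so nonexpansiveness of $\prox_{g_k}$ gives $\vert p_k\vert\leq\vert x_k\vert$, hence $p=\sum_{k}p_ke_k\in X$ with $\Vert p\Vert\leq\Vert x\Vert$. For any $y\in X$ the objective splits as $g(y)+\frac{1}{2}\Vert y-x\Vert^2=\sum_{k}\big(g_k(y_k)+\frac{1}{2}\vert y_k-x_k\vert^2\big)$, and each summand is minimized over $y_k$ exactly at $p_k$; summing the term-by-term inequalities (trivial when $g(y)=\infty$) shows that $p$ minimizes the objective, so $p=\prox_g(x)$. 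Alternatively, once (iii) is available, (iv) follows instantly from the resolvent identity $p=\prox_g(x)\iff x-p\in\partial g(p)$ read coordinatewise.

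The main obstacle, present in every item, is the careful bookkeeping of infinite sums valued in $[0,+\infty]$, and in particular the square-summability of the candidate objects: the subgradient selection $u$ in (ii)--(iii) and the proximal point $p$ in (iv). Both are controlled by the normalization $g_k(0)=\inf g_k=0$, which simultaneously makes $g$ well-defined as a sum of nonnegative terms, produces the bound $g_k(x_k)\leq u_kx_k$ ensuring $x\in\dom g$, and forces $\prox_{g_k}(0)=0$ so that nonexpansiveness pins $p$ inside $X$. Once summability is secured, the coordinatewise nature of $g$ reduces each statement to its scalar counterpart.
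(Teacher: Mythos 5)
Your proof is correct, and on the core items (ii)--(iii) it takes essentially the same route as the paper: the subdifferential is computed coordinatewise, the key step being the one-coordinate test vectors $y = x + (t - x_{k_0})e_{k_0}$. You are in fact more careful than the paper, which compresses (ii)--(iii) into a single chain of equivalences and leaves the summability issues implicit; your two-inclusion argument, with the bound $0 \le g_k(x_k) \le u_k x_k$ ensuring $x \in \dom g$ and the passage to the limit over finite subsets to recover the global subgradient inequality, fills exactly those gaps. The differences elsewhere are minor but real. For (i), the paper invokes Fatou's lemma for lower semicontinuity, where you write $g$ as the supremum of its finite partial sums; since the terms are nonnegative these are interchangeable devices. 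For (iv), the paper reads the resolvent identity $x - p \in \partial g(p)$ coordinatewise via (iii) --- this is your ``alternative'' argument --- whereas your primary argument constructs $p_k = \prox_{g_k}(x_k)$ directly and verifies optimality term by term, using $\prox_{g_k}(0)=0$ and nonexpansiveness to secure $p \in X$. Your direct route buys two things: it is independent of (iii), and it makes explicit the square-summability of the candidate proximal point, which the paper's one-line deduction (relying on the a priori existence of $\prox_g(x)$ in the Hilbert space) takes for granted.
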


\begin{proof}
\ref{P:countable sum of functions:convex}: $g$ is convex by definition. It is proper because $g(0)=0$ and $g\geq 0$. Fatou's lemma implies that $g$  is lower semicontinuous.

\ref{P:countable sum of functions:domain ssdiff}-\ref{P:countable sum of functions:ssdiff}: follow directly from the fact that
\begin{eqnarray*}
(\forall (x^*,x) \in X^2) \quad x^* \in \partial g (x) & \Leftrightarrow & (\forall y \in X) \quad g(y) - g(x) - \langle x^* , y - x \rangle \geq 0 \\
	 & \Leftrightarrow & (\forall y \in X) \quad \sum\limits_{\kin}^{} g_k(y_k) - g_k(x_k) - \langle x_k^* , y_k - x_k \rangle \geq 0 \\
	 	 & \Leftrightarrow & (\forall \kinn) \quad x_k^* \in \partial g_k(x_k),
\end{eqnarray*}
where the last equivalence holds by taking for all $\kinn$ \, $y_i=x_i$ if $i\neq k$.\

\ref{P:countable sum of functions:prox}: let $(x,p) \in X^2$. It follows from \ref{P:countable sum of functions:ssdiff} that
\begin{align*}
p=\prox_g(x) &\iff p-x\in\partial g(p)\\
&\implies(\forall \kinn)\quad p_k-x_k \in\partial g_k(p_k)\\
&\iff (\forall \kinn)\quad p_k=\prox_{g_k}(x_k). 
\end{align*}
\end{proof}

\begin{proposition}
\label{P:countable sum of support functions}
Let $\I=(I_k)_\kin$ is a family of proper closed interval of $\R$.
Let, for every $x\in X$, $g(x) =\sum_\kinn \sigma_{I_k}(x_k)$. Then the 
following hold.
\begin{enumerate}[(i)]
	\item\label{P:countable sum of support functions:coercivity equivalence} $g$ is coercive if and only if $0 \in \inte I_k$ for all $\kinn$.
\end{enumerate}
Assume moreover that there exists $\omega > 0$ such that $[-\omega, \omega] \subset I_k$ for all $\kinn$. Then
\begin{enumerate}[resume*]
	\item\label{P:countable sum of support functions:coercivity and support box} $g \in \Gamma_0(X)$ is coercive and $g$ is the support function of ${\B}_{\infty,\I}=\bigoplus_{\kinn} {I}_k$,
	\item\label{P:countable sum of support functions:dom ssdif} $\dom \partial g = c_{00}$ and $\dom \partial g^* = {\B}_{\infty,\I}$,
	\item\label{P:countable sum of support functions:prox} for every $x \in X$, and for every $\lambda >0$, \, $\prox_{\lambda g}(x) = \left( x_k - \lambda \proj_{I_k}(\lambda^{-1} x_k) \right)_\kinn$.
\end{enumerate}	 
\end{proposition}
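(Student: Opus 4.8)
The plan is to read $g=\sum_{\kinn}\sigma_{I_k}(x_k)$ as the coordinatewise function of Proposition \ref{P:countable sum of functions} with one-dimensional blocks $g_k=\sigma_{I_k}$, and to feed into that machinery three elementary facts about the support function of an interval: $\partial\sigma_{I_k}(0)=I_k$ and $\partial\sigma_{I_k}(t)\subset\bd I_k$ for $t\neq0$ (Proposition \ref{P:derivative of support function}\ref{P:derivative of support function:0 gives full set}--\ref{P:derivative of support function:exposed faces in boundary}); $\prox_{\sigma_{I_k}}=\soft_{I_k}=\Id-\proj_{I_k}$; and $\sigma_{I_k}$ coercive on $\R$ iff $0\in\inte I_k$. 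As a preliminary I would record $g=\sigma_{\B_{\infty,\I}}$: because $\B_{\infty,\I}=\bigoplus_{\kinn}I_k$ is a box, $\sup_{u\in\B_{\infty,\I}}\sum_k x_ku_k$ splits coordinatewise into $\sum_k\sigma_{I_k}(x_k)=g(x)$. This already yields the support-function half of item \ref{P:countable sum of support functions:coercivity and support box}, and by biconjugation $g^*=\delta_{\B_{\infty,\I}}$.

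For item \ref{P:countable sum of support functions:coercivity equivalence} the necessity of $0\in\inte I_k$ is obtained by restricting $g$ to the axis $\R e_k$, where $g(te_k)=\sigma_{I_k}(t)$, so that coercivity of $g$ forces coercivity of $\sigma_{I_k}$. For the converse I would use $g=\sigma_{\B_{\infty,\I}}$ together with the classical fact that a support function is coercive exactly when the origin lies in the interior of the underlying set, and identify that interior via Proposition \ref{P:interior of infinite products}\ref{P:interior of infinite productsii} as $\bigoplus_{\kinn}\inte I_k$. This converse is the \emph{delicate} point of (i): in infinite dimensions the interior of a box may be empty, and the reduction genuinely relies on the uniform bound $[-\omega,\omega]\subset I_k$ under which Proposition \ref{P:interior of infinite products} holds, so I would invoke it only in that regime. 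Item \ref{P:countable sum of support functions:coercivity and support box} is then immediate under that uniform bound, since $\sigma_{I_k}(t)\geq\omega|t|$ gives $g(x)\geq\omega\sum_k|x_k|\geq\omega\|x\|$, i.e. (super)coercivity, with membership in $\Gamma_0(X)$ coming from Proposition \ref{P:countable sum of functions}\ref{P:countable sum of functions:convex}.

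The \emph{main obstacle} is item \ref{P:countable sum of support functions:dom ssdif}. By Proposition \ref{P:countable sum of functions}\ref{P:countable sum of functions:domain ssdiff}, $x\in\dom\partial g$ iff there is a square-summable selection $u_k\in\partial\sigma_{I_k}(x_k)$. The key quantitative observation is that whenever $x_k\neq0$ any admissible $u_k$ lies in $\bd I_k$, and since $[-\omega,\omega]\subset I_k$ such a boundary point obeys $|u_k|\geq\omega$; hence an infinite support of $x$ would force infinitely many $|u_k|\geq\omega$ and ruin square-summability, giving $\dom\partial g\subset c_{00}$. Conversely, for $x\in c_{00}$ one chooses $u_k\in\partial\sigma_{I_k}(x_k)$ on the finite support and $u_k=0\in I_k=\partial\sigma_{I_k}(0)$ elsewhere, a finitely supported and hence square-summable selection, so $c_{00}\subset\dom\partial g$. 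Finally $\dom\partial g^*=\B_{\infty,\I}$ because $\partial g^*=\partial\delta_{\B_{\infty,\I}}$ is the normal cone of $\B_{\infty,\I}$, which is nonempty precisely on $\B_{\infty,\I}$.

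Item \ref{P:countable sum of support functions:prox} follows by applying Proposition \ref{P:countable sum of functions}\ref{P:countable sum of functions:prox} to $\lambda g=\sum_k\sigma_{\lambda I_k}(x_k)$, which gives $\prox_{\lambda g}(x)=\sum_k\prox_{\sigma_{\lambda I_k}}(x_k)e_k$; then $\prox_{\sigma_{\lambda I_k}}=\soft_{\lambda I_k}=\Id-\proj_{\lambda I_k}$ and the positive homogeneity $\proj_{\lambda I_k}(x_k)=\lambda\proj_{I_k}(\lambda^{-1}x_k)$ produce the stated coordinate $x_k-\lambda\proj_{I_k}(\lambda^{-1}x_k)$. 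The only routine checks left are the sup/sum interchange in the preliminary identity and the one-dimensional soft-threshold/projection facts, both standard.
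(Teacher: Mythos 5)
Your items \ref{P:countable sum of support functions:dom ssdif} and \ref{P:countable sum of support functions:prox} are correct and essentially identical to the paper's proof: the same square-summability argument via $\partial\sigma_{I_k}(x_k)\subset\bd I_k$ and $\vert u_k\vert\geq\omega$ gives $\dom\partial g\subset c_{00}$, the same zero-extension gives the converse, and where the paper cites Moreau's identity for the prox formula you use the equivalent one-dimensional facts $\prox_{\sigma_{\lambda I}}=\Id-\proj_{\lambda I}$ and $\proj_{\lambda I_k}(t)=\lambda\proj_{I_k}(\lambda^{-1}t)$. For item \ref{P:countable sum of support functions:coercivity and support box} your route is genuinely different and arguably cleaner: the paper computes $g^*=\delta_{\B_{\infty,\I}}$ by a two-sided inequality and then gets coercivity by citing item \ref{P:countable sum of support functions:coercivity equivalence}, whereas you prove $g=\sigma_{\B_{\infty,\I}}$ directly and obtain coercivity from the self-contained bound $g(x)\geq\omega\sum_\kinn\vert x_k\vert\geq\omega\Vert x\Vert$, which bypasses (i) entirely. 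Be aware, though, that the ``routine'' sup/sum interchange is exactly where the hypothesis $[-\omega,\omega]\subset I_k$ is consumed: the inequality $\sigma_{\B_{\infty,\I}}(x)\geq g(x)$ needs a finite near-maximizing selection completed by $u_k=0\in I_k$ and a nonnegative tail to discard; this is precisely the content of the paper's second inequality in its computation of $g^*$.

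Two genuine problems remain, both around item \ref{P:countable sum of support functions:coercivity equivalence}. First, you assert $g=\sigma_{\B_{\infty,\I}}$ as a blanket preliminary, before any hypothesis on the intervals; that identity is false in general. If $\mathcal{N}$ is infinite and $I_k=[1,2]$ for every $k$, then $\B_{\infty,\I}=\emptyset$ (no square-summable selection exists), so $\sigma_{\B_{\infty,\I}}\equiv-\infty$ while $g$ is finite on $c_{00}$. The identity must be stated and proved only under the uniform bound, which is where you actually use it. Second, as you yourself note, your argument for the ``if'' direction of (i) works only in the uniform regime, so the item as stated --- which carries no uniform bound --- is left unproven. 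You should know that this is a defect of the statement and of the paper's own proof, not merely of your caution: the paper's first equivalence, that coercivity of every $\sigma_{I_k}$ implies coercivity of $g$, is justified only in the forward direction and fails for infinite $\mathcal{N}$. Indeed, take $I_k=[-1/(k+1),1/(k+1)]$: then $0\in\inte I_k$ for every $k$, yet $g(Ne_N)=N/(N+1)\leq 1$ while $\Vert Ne_N\Vert=N\to\infty$, so $g$ is not coercive. A complete write-up should therefore state explicitly that the unrestricted ``if'' direction requires either finite $\mathcal{N}$ or a uniform bound $\inf_\kinn\dist(0,\bd I_k)>0$, and prove it there. (A smaller shared caveat: in item \ref{P:countable sum of support functions:dom ssdif}, both you and the paper pick $u_k\in\partial\sigma_{I_k}(x_k)$ for $x_k\neq 0$ without checking nonemptiness; this can fail when $I_k$ is unbounded, e.g. $I_k=[-\omega,+\infty[$, in which case $c_{00}\not\subset\dom g$ --- consistent with the boundedness assumption later added in Proposition \ref{P:LASSO is FUG}.)
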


\begin{proof} 
\ref{P:countable sum of support functions:coercivity equivalence}: observe that
\begin{eqnarray*}
\text{$g$ is coercive} & \Leftrightarrow & (\forall \kinn) \quad \text{$\sigma_{I_k}$ is coercive} \quad \text{(take $x_k=0$ except for one index $k$)} \\
	& \Leftrightarrow & (\forall \kinn) \quad 0 \in \inte \dom \sigma_{I_k}^* \quad \text{by \cite[Proposition 14.16]{BauComV2}} \\
	& \Leftrightarrow & (\forall \kinn) \quad 0 \in \inte I_k \quad \text{since $\sigma_{I_k}^* = \delta_{I_k}$.}
\end{eqnarray*}
\ref{P:countable sum of support functions:coercivity and support box}: assume that $\mathcal{N}$ is infinite.
Item \ref{P:countable sum of support functions:coercivity equivalence} implies that $g \in \Gamma_0(X)$ and is coercive.
To prove that $g$ is the support function of ${\B}_{\infty,\I}$, we will show that $g^*$ is its indicator function.
Let $x^* \in X$. Then
\begin{align*}
g^*(x^*) &= \sup\limits_{x \in X } \langle x^* , x \rangle - g(x) =\sup\limits_{x \in X}  \sum\limits_{\kinn}^{}\langle x^*_k, x_k \rangle - \sigma_{I_k}(x_k) \\
&\leq  \sum\limits_{\kinn}^{} \sup\limits_{x_k \in X_k} \langle x^*_k, x_k \rangle - \sigma_{I_k}(x_k)= \sum\limits_{\kinn}^{} \sigma_{I_k}^*(x_k^*) = \delta_{{\B}_{\infty,\I}}(x^*)
\end{align*}
To prove the converse inequality, since $x^* \in X$, there exists some $K \in\mathcal{N}$ such that for all $k \geq K$, $\Vert x^*_k \Vert < \omega$, meaning that $x^*_k \in I_k$, 
and therefore $\delta_{I_k}(x_k^*)=0$.
Let $J_K=\{0,\dots , K-1\}$. Since we deal with a finite sum, 
\begin{eqnarray*}
\delta_{{\B}_{\infty,\I}}(x^*) & =& \sum\limits_{k \in J_K}^{} \delta_{I_k}(x_k^*) 
= \sum\limits_{k \in J_K}^{} \sup\limits_{x_k \in \R} \langle x_k^*,x_k\rangle - \sigma_{I_k}(x_k) 
= \sup\limits_{x \in X_{J_K}} \sum\limits_{k \in J_K}^{} \langle x_k^*,x_k\rangle - \sigma_{I_k}(x_k) .
\end{eqnarray*}
Moreover, setting $J_\infty=\mathcal{N} \setminus {J_K}$: $$\sup\limits_{x \in X_{J_\infty}} \sum\limits_{k \in J_\infty}^{} \langle x_k^*,x_k\rangle - \sigma_{I_k}(x_k) \geq 0,$$
and this yields
\begin{equation*}
\delta_{{\B}_{\infty,\I}}(x^*)  \leq \sup\limits_{x \in X_{J_K}} \sum\limits_{k \in J_K}^{} \langle x_k^*,x_k\rangle - \sigma_{I_k}(x_k) + \sup\limits_{x \in X_{J_\infty}} \sum\limits_{k \in J_\infty}^{} \langle x_k^*,x_k\rangle - \sigma_{I_k}(x_k) = g^*(x^*).
\end{equation*}
\ref{P:countable sum of support functions:dom ssdif}:  assume that $\mathcal{N}$ is infinite. The equality $\dom \partial g^* = {\B}_{\infty,\I}$ follows from \ref{P:countable sum of support functions:coercivity and support box}.
It remains to show that $\dom \partial g = c_{00}$.
Let $x \in \dom \partial g$. By Proposition \ref{P:countable sum of functions}\ref{P:countable sum of functions:domain ssdiff} there exists $x^* \in \bigoplus_{\kinn} \partial \sigma_{I_k}(x_k)$.
For all $\kinn$, Proposition \ref{P:derivative of support function}\ref{P:derivative of support function:0 gives full set}-\ref{P:derivative of support function:exposed faces in boundary} 
yields that $\partial \sigma_{I_k}(x_k) = I_k$ if $x_k = 0$, and $\partial \sigma_{I_k}(x_k) \subset \bd I_k$ if $x_k \neq 0$.
Assume by contradiction that $x \notin c_{00}$, i.e. there exists $k_n \to + \infty$ such that $x_{k_n} \neq 0$ for all $n\in\N$.
Then, it follows that $x_{k_n}^* \in \bd I_{k_n}$ for all $\nin$, and therefore $\Vert x_{k_n}^* \Vert \geq \omega$, which  contradicts the fact that $x^* \in X$.
Now, let $x \in c_{00}$ and let $K \in \mathcal{N}$ be such that $x_k =0$ for all $k \geq  K$ and let $\mathcal{N}_K=\mathcal{N}\cap\{0,\ldots,K\}$. 
By Proposition \ref{P:derivative of support function}\ref{P:derivative of support function:0 gives full set}
$\partial \sigma_{I_k}(x_k) = I_k \ni 0$ for all $k \geq  K$, therefore
\begin{equation*}
\emptyset \neq \bigoplus_{k\in\mathcal{N}_K}\partial \sigma_{I_k}(x_k) \subset \partial g(x).
\end{equation*}
\ref{P:countable sum of support functions:prox}: is a direct consequence of Proposition \ref{P:countable sum of functions}\ref{P:countable sum of functions:prox} and Moreau's identity \cite[Theorem 14.3.ii]{BauComV2}.
\end{proof}

We recall a sum rule for conditioning,  obtained in \cite[Theorem 3.1]{GarRosVil17}.
\begin{theorem}\label{T:sum rule}
Let $f=g+h$, where $g \in \Gamma_0(X)$ and $h \in \Gamma_0(X)$ is of class $C^1$.
Let $\Omega\subset X$.
Assume that there exists $\bar x \in \argmin f$ such that, for $\bar v=-\nabla h (\bar x)$ and $(p_1,p_2)\in \left[1,+\infty\right[^2$,
\begin{equation}
\label{e:tilt}
\text{ $g - \langle \bar v, \cdot \rangle$ is $p_1$-conditioned on $\Omega$ and $h + \langle \bar v, \cdot \rangle$ is $p_2$-conditioned on $\Omega$. }
\end{equation}
Suppose that 
\begin{eqnarray}
\label{e:qcond}
	0 & \in & \sri(\partial g^*(\bar v) - \partial h^* ( - \bar v) ), \label{D:QC2}
\end{eqnarray}
and let $p=\max\{p_1,p_2\}$. Then, for any $\delta \in \left]0, + \infty\right[$, $f$ is $p$-conditioned on $\Omega \cap  \B_X(0,\delta)$.
\end{theorem}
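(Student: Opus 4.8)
The plan is to reduce $f$ to a sum of two \emph{tilted} functions whose minimizer sets are explicit, feed in the two conditioning hypotheses separately, and then glue the resulting distance estimates together using the constraint qualification \eqref{e:qcond}, which I expect to enforce a bounded linear regularity of the two minimizer sets.

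First I would absorb the linear terms. Setting $g_1 = g - \langle \bar v, \cdot \rangle$ and $h_1 = h + \langle \bar v, \cdot \rangle$, the tilts cancel, so $f = g_1 + h_1$. The optimality conditions identify the minimizers: $x \in \argmin g_1 \iff \bar v \in \partial g(x) \iff x \in \partial g^*(\bar v)$, and, since $h$ is of class $C^1$, $x \in \argmin h_1 \iff \nabla h(x) = -\bar v \iff x \in \partial h^*(-\bar v)$. Writing $C_1 = \partial g^*(\bar v) = \argmin g_1$ and $C_2 = \partial h^*(-\bar v) = \argmin h_1$, the relation $\bar v = -\nabla h(\bar x) \in \partial g(\bar x)$ shows $\bar x \in C_1 \cap C_2$. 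Because $f(x) = g_1(x) + h_1(x) \geq \inf g_1 + \inf h_1$ with equality precisely on $C_1 \cap C_2$, and $\bar x$ attains this bound, I obtain $\inf f = \inf g_1 + \inf h_1$ and $\argmin f = C_1 \cap C_2$. Moreover \eqref{e:qcond} reads exactly $0 \in \sri(C_1 - C_2)$.

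Next, the two conditioning hypotheses \eqref{e:tilt} give, for every $x \in \Omega$,
\begin{equation*}
f(x) - \inf f = \big(g_1(x) - \inf g_1\big) + \big(h_1(x) - \inf h_1\big) \geq \frac{\gamma_1}{p_1}\dist(x, C_1)^{p_1} + \frac{\gamma_2}{p_2}\dist(x, C_2)^{p_2}.
\end{equation*}
The core step is to convert this right-hand side, which controls the distances to $C_1$ and to $C_2$ separately, into a control of $\dist(x, C_1 \cap C_2) = \dist(x, \argmin f)$. For this I would prove that $\{C_1, C_2\}$ is boundedly linearly regular: since $C_1, C_2$ are closed convex sets with $0 \in \sri(C_1 - C_2)$, for each $\delta \in \left]0,+\infty\right[$ there is $\kappa \in \left]0,+\infty\right[$ with
\begin{equation*}
(\forall x \in \B_X(0,\delta)) \qquad \dist(x, C_1 \cap C_2) \leq \kappa\big(\dist(x, C_1) + \dist(x, C_2)\big).
\end{equation*}

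Finally I would carry out the elementary bookkeeping on $\Omega \cap \B_X(0,\delta)$. Since $\bar x \in C_i$ and $\|x\| \leq \delta$, each distance is bounded by $M := \delta + \|\bar x\|$, whence $\dist(x, C_i)^{p_i} \geq M^{-(p - p_i)}\dist(x, C_i)^{p}$ because $p = \max\{p_1,p_2\} \geq p_i$. Combining this with the convexity inequality $a^p + b^p \geq 2^{1-p}(a+b)^p$ and the regularity bound above yields, for a constant $\gamma$ built from $\gamma_1, \gamma_2, M, \kappa$ and $p$,
\begin{equation*}
f(x) - \inf f \geq \frac{\gamma}{p}\,\dist(x, C_1 \cap C_2)^p = \frac{\gamma}{p}\,\dist(x, \argmin f)^p,
\end{equation*}
which is exactly the $p$-conditioning of $f$ on $\Omega \cap \B_X(0,\delta)$. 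I expect the bounded linear regularity step to be the true obstacle: in finite dimension it follows from Hoffman-type error bounds, but in a general Hilbert space it must be extracted from the strong-relative-interior qualification, typically through an open-mapping / Robinson--Ursescu argument applied to the set difference $C_1 - C_2$. This is precisely where the use of $\sri$, rather than a mere nonempty intersection $\bar x \in C_1 \cap C_2$, is essential, and also why the conclusion is localized to the bounded set $\Omega \cap \B_X(0,\delta)$.
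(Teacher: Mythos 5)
Your proposal is correct and follows essentially the route of the proof this paper relies on: Theorem \ref{T:sum rule} is not proved here but recalled from \cite[Theorem 3.1]{GarRosVil17}, whose argument likewise tilts $g$ and $h$ by $\bar v$, identifies $\argmin f = \partial g^*(\bar v)\cap\partial h^*(-\bar v)$ with $\inf f$ splitting additively, sums the two conditioning inequalities, and passes from the pair $\dist(\cdot,C_1),\dist(\cdot,C_2)$ to $\dist(\cdot,\argmin f)$ via bounded linear regularity, which is exactly what the qualification $0\in\sri(C_1-C_2)$ delivers (an Attouch--Brezis/Robinson--Ursescu-type fact, available e.g.\ in Zalinescu's book \cite{Zal}) and what forces the localization to $\B_X(0,\delta)$. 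The one step you leave as a black box --- that $0\in\sri(C_1-C_2)$ for closed convex $C_1,C_2$ yields $\dist(x,C_1\cap C_2)\leq\kappa\left(\dist(x,C_1)+\dist(x,C_2)\right)$ on bounded sets --- is indeed a known theorem (note the sri condition correctly excludes the classical counterexample of two closed subspaces with nonclosed sum), so your argument is complete modulo citing it, and your closing bookkeeping with $M=\delta+\Vert\bar x\Vert$ and $a^p+b^p\geq 2^{1-p}(a+b)^p$ is sound.
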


\subsection{Auxiliary results}
\begin{lemma}
Let $\{x^1,...,x^N\} \subset X$ be a finite family.
Then there exists $\bar x \in \co \{ x^1,...,x^N \}$ such that $\supp(\bar x) = \cup \{ \supp(x^i) \ | \ i\in \{1,...,N\} \}$.
\end{lemma}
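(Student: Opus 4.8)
The plan is to regard every point of $\co\{x^1,\dots,x^N\}$ as a convex combination
\begin{equation*}
x(\lambda) = \sum_{i=1}^N \lambda_i \, x^i, \qquad \lambda \in \Delta := \Big\{ \lambda \in \R^N \ \Big|\ \lambda_i \geq 0, \ \textstyle\sum_{i=1}^N \lambda_i = 1 \Big\},
\end{equation*}
and to show that a \emph{generic} $\lambda$ realizes the largest possible support. Write $J = \bigcup_{i=1}^N \supp(x^i)$, which is at most countable. Since each $x^i$ is supported in $J$, one has $\supp(x(\lambda)) \subset J$ for every $\lambda \in \Delta$; hence it suffices to find $\lambda$ for which no coordinate indexed by $J$ is accidentally cancelled, i.e.\ $x(\lambda)_k = \sum_{i=1}^N \lambda_i x^i_k \neq 0$ for every $k \in J$.

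The case $N=1$ being trivial, assume $N \geq 2$ and, for each $k \in J$, set
\begin{equation*}
B_k = \Big\{ \lambda \in \Delta \ \Big|\ \textstyle\sum_{i=1}^N \lambda_i x^i_k = 0 \Big\}.
\end{equation*}
By definition of $J$, for each $k \in J$ some index $i$ satisfies $x^i_k \neq 0$; since the vertices of $\Delta$ lie in its $(N-1)$-dimensional affine hull, the affine map $\lambda \mapsto \sum_i \lambda_i x^i_k$ does not vanish identically there. Its zero set is therefore a proper affine subset of the affine hull, of dimension at most $N-2$, so $B_k$ has $(N-1)$-dimensional Lebesgue measure zero in $\Delta$.

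The heart of the matter is that these cancellation sets cannot exhaust the simplex. As $J$ is countable, $\bigcup_{k\in J} B_k$ is a countable union of null sets, hence itself null, whereas $\Delta$ has positive $(N-1)$-dimensional measure; consequently $\Delta \setminus \bigcup_{k\in J} B_k \neq \emptyset$. (A Baire category argument works just as well, each $B_k$ being closed and nowhere dense in the compact metric space $\Delta$.) Choosing any $\lambda^\star$ in this set and putting $\bar x = x(\lambda^\star) \in \co\{x^1,\dots,x^N\}$ gives $\bar x_k \neq 0$ for all $k \in J$, which together with $\supp(\bar x) \subset J$ yields $\supp(\bar x) = J$, as desired.

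The only genuinely delicate point is that $J$ may be infinite: one cannot simply avoid finitely many hyperplanes, and it is precisely the countability of $J$ combined with the measure (or Baire) argument that guarantees countably many cancellation sets still leave room in $\Delta$. The remaining checks---that $\supp(x(\lambda)) \subset J$ always, and that each defining functional is nontrivial on the affine hull---are routine.
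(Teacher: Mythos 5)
Your proof is correct, but it takes a genuinely different route from the paper. The paper argues by induction on $N$: for $N=2$ it identifies the (at most countable) set $\Lambda$ of convex coefficients $\lambda$ at which some coordinate of $\lambda x^1+(1-\lambda)x^2$ cancels, picks $\lambda\in\left]0,1\right[\setminus\Lambda$, and then bootstraps to general $N$ by combining the inductively obtained point with $x^{N+1}$. You instead work on the whole simplex $\Delta\subset\R^N$ at once, observing that each cancellation set $B_k$ is the trace on $\Delta$ of a proper affine subspace of the affine hull (nonvanishing being certified at a vertex), hence is Lebesgue-null (or closed and nowhere dense), and that countably many such sets cannot cover $\Delta$. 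Both proofs rest on the same underlying principle---countably many degenerate parameter choices cannot exhaust a continuum---but your one-shot genericity argument dispenses with the induction entirely and shows the stronger statement that almost every (indeed, a residual set of) $\lambda\in\Delta$ works; the price is an appeal to $(N-1)$-dimensional Lebesgue measure or Baire category. The paper's induction is more elementary in its toolkit: it only uses that a countable subset of $\left]0,1\right[$ has nonempty complement, at the cost of a two-point reduction step and a slightly fiddlier explicit description of the bad set $\Lambda$. Each argument is complete and rigorous; yours scales more transparently, the paper's stays closer to first principles.
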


\begin{proof}
We proceed by induction.
If $N=1$ this is trivially true.
Let us turn on the $N=2$ case, by considering $\{x^1,x^2\}$ in $X$.
If $\supp(x^1) = \supp(x^2)$, then it is enough to take $\bar x = x^1$ or $\bar x = x^2$.
Assume that $\supp(x^1) \neq \supp(x^2)$.
Define
\begin{equation*}
\Lambda = \left\{ \frac{\vert x_k^2 \vert }{\vert x^2_k-x_k^1 \vert} \ | \ \kinn, \,  x^1_k  x^2_k<0  \right\}.
\end{equation*}
$\Lambda$ is well defined because $ x^1_kx^2_k <0$ implies that $x^2_k - x^1_k \neq 0$.
Moreover, $\Lambda \subset \left]0,1\right[$, and is at most countable.
Let $\lambda \in \left]0,1\right[ \setminus \Lambda$, and define $\bar x = \lambda x^1 + (1- \lambda) x^2$.
By definition we have $\bar x \in \co \{ x^1,x^2\}$, so it remains to check that $\supp(\bar x) = \supp(x^1) \cup \supp(x^2)$. To prove this, first assume that $k \in \supp(\bar x)$.
If $k\in \supp(x^1)$ it is trivial, so assume that $k\notin \supp(x^1)$.
In that case $\bar x_k = \lambda \cdot 0 + (1-\lambda) x^2_k$, where $\lambda \neq 1$ and $\bar x_k \neq 0$, from which we deduce that $k \in \supp(x^2)$.
This shows that $\supp(\bar x) \subset\supp(x^1) \cup \supp(x^2)$
Now, take $k \in \supp(x^1) \cup \supp(x^2)$, and assume by contradiction that $\bar x_k =0$.
Then 
\begin{equation*}
x_k^1\neq 0, x_k^2 \neq 0, x_k^1 = (1 - \lambda^{-1}) x_k^2, \text{ and } \lambda = \frac{\vert x_k^2 \vert }{\vert x^2_k-x_k^1 \vert},
\end{equation*}
which contradicts the fact that $\lambda \notin \Lambda$. Therefore $\supp(x^1) \cup \supp(x^2)\subset \supp(\bar x) $.
Assume now that the statement  holds for $N \geq 2$, and let us prove it for $N+1$.
Let $\{x^1,...,x^N, x^{N+1}\} \subset X$ be a finite family.
By inductive hypotheses we can find some $\bar x^1 \in \co \{x^1,...,x^N\}$ such that $\supp(\bar x^1) = \cup \{ \supp(x^i) \ | \ i\in \{1,...,N\} \}$.
Moreover, the inductive hypotheses guarantees the existence of some $\bar x \in \co \{ \bar x^1,x^{N+1}\}$ such that $\supp(\bar x) = \supp(\bar x^1)  \cup \supp(x^{N+1})$.
We derive from the definition of $\bar x^1$ that $\supp(\bar x) =\cup \{ \supp(x^i) \ | \ i\in \{1,...,N+1\} \}$.
Also, $\bar x^1 \in \co \{x^1,...,x^N\}$ and $\bar x \in \co \{ \bar x^1,x^{N+1}\}$  imply that $\bar x \in\co \{x^1,...,x^N,x^{N+1}\}$, which ends the proof.
\end{proof}

\begin{lemma}\label{L:existence point support convex hull}
Let $C \subset X$ be a convex nonempty set, and $J= \cup \{ \supp(x) \ | \ x \in C \}$.
If $J$ is finite, then there exists $\bar x \in C$ such that $\supp (\bar x) = J$.
\end{lemma}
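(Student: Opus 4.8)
The plan is to reduce the statement entirely to the finite-family lemma proved immediately above, using the finiteness of $J$ to pass from a single covering family to all of $J$, and using convexity of $C$ to stay inside $C$. First I would dispose of the degenerate case $J=\emptyset$: this occurs exactly when every element of $C$ has empty support, i.e. $C\subset\{0\}$; since $C$ is nonempty this forces $C=\{0\}$, and then $\bar x=0\in C$ satisfies $\supp(\bar x)=\emptyset=J$. So I may assume $J\neq\emptyset$.

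Next I would extract a finite subfamily of $C$ whose supports already cover $J$. By definition $J=\cup\{\supp(x)\ |\ x\in C\}$, so for each index $k\in J$ there exists a point $x^k\in C$ with $k\in\supp(x^k)$. As $J$ is finite, this yields a \emph{finite} family $\{x^k\}_{k\in J}\subset C$. By construction $J\subset\bigcup_{k\in J}\supp(x^k)$, and the reverse inclusion $\bigcup_{k\in J}\supp(x^k)\subset J$ is immediate since each $\supp(x^k)\subset J$; hence $\bigcup_{k\in J}\supp(x^k)=J$.

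I would then apply the previous lemma to the finite family $\{x^k\}_{k\in J}$, obtaining a point $\bar x\in\co\{x^k\ |\ k\in J\}$ with $\supp(\bar x)=\bigcup_{k\in J}\supp(x^k)=J$. Finally, since $C$ is convex and contains each $x^k$, its convex hull satisfies $\co\{x^k\ |\ k\in J\}\subset C$, so $\bar x\in C$. This is precisely the point required by the statement.

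I do not expect any genuine obstacle here: all the combinatorial content — the construction of a convex combination whose support is exactly the union of the supports — is already carried out in the preceding lemma. The present statement is merely the observation that, when $J$ is finite, finitely many witnesses $x^k\in C$ suffice to realize all of $J$, after which convexity of $C$ keeps the produced point inside $C$. The only point meriting a word of care is the empty-support degenerate case, handled separately above.
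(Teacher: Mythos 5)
Your proof is correct and follows essentially the same route as the paper: extract a finite family in $C$ whose supports cover $J$ (one witness per index), apply the preceding finite-family lemma, and use convexity of $C$ to keep the resulting convex combination inside $C$. The explicit treatment of the case $J=\emptyset$ is a minor addition the paper leaves implicit.
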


\begin{proof}
Since $J$ is finite, there exists a finite family $\{x^1,...,x^N\} \subset C$ such that $J = \cup \{ \supp(x^i) \ | \ i\in \{1,...,N\} \}$.
It suffices then to apply the previous lemma to obtain such $\bar x \in  \co \{ x^1,...,x^N \} \subset C$.
\end{proof}

\end{document}